\crefname{theorem}{Theorem}{Theorems}
\crefname{thm}{Theorem}{Theorems}
\crefname{lemma}{Lemma}{Lemmas}
\crefname{claim}{Claim}{Claims}
\crefname{lem}{Lemma}{Lemmas}
\crefname{remark}{Remark}{Remarks}
\crefname{prop}{Proposition}{Propositions}
\crefname{proposition}{Proposition}{Propositions}
\crefname{defn}{Definition}{Definitions}
\crefname{definition}{Definition}{Definitions}
\crefname{corollary}{Corollary}{Corollaries}
\crefname{conjecture}{Conjecture}{Conjectures}
\crefname{question}{Question}{Questions}
\crefname{chapter}{Chapter}{Chapters}
\crefname{section}{Section}{Sections}
\crefname{part}{Part}{Parts}
\crefname{figure}{Figure}{Figures}
\theoremstyle{plain}
\newtheorem{thm}{Theorem}[section]
\newtheorem{lemma}[thm]{Lemma}
\newtheorem{lem}[thm]{Lemma}
\newtheorem{cor}[thm]{Corollary}
\newtheorem{prop}[thm]{Proposition}
\theoremstyle{definition}
\newtheorem{defn}[thm]{Definition}
\newtheorem{example}[thm]{Example}
\theoremstyle{remark}
\newtheorem{remark}[thm]{Remark}
\newtheorem{rk}[thm]{Remark}
\newtheorem*{remark*}{Remark}
\numberwithin{equation}{section}
\renewcommand{\P}{\mathbb P}
\newcommand{\E}{\mathbb E}
\newcommand{\R}{\mathbb R}
\newcommand{\Z}{\mathbb Z}
\newcommand{\cA}{\mathcal A}
\newcommand{\cB}{\mathcal B}
\newcommand{\cE}{\mathcal E}
\newcommand{\cF}{\mathcal F}
\newcommand{\cH}{\mathcal H}
\newcommand{\cL}{\mathcal L}
\newcommand{\sC}{\mathscr C}
\newcommand{\Aut}{\operatorname{Aut}}
\def\P{\mathbb{P}}
\newcommand{\bra}[1]{\left[#1\right]}
\newcommand{\seq}[1]{\left(#1\right)}
\DeclareMathSymbol{\leqslant}{\mathalpha}{AMSa}{"36} % nicer `smaller or equal'
\DeclareMathSymbol{\geqslant}{\mathalpha}{AMSa}{"3E} % nicer `larger or equal'
\DeclareMathSymbol{\eset}{\mathalpha}{AMSb}{"3F}     % nicer `emptyset'
\renewcommand{\epsilon}{\varepsilon}
\newcommand{\p}{\mathbb{P}}
\newcommand{\e}{\mathbb{E}}
\renewcommand{\bra}[1]{\left(#1\right)}
\newcommand{\sqbra}[1]{\left[#1\right]}
\newcommand{\cubra}[1]{\left\{#1\right\}}
\newcommand{\den}[1]{\left\lVert#1\right\rVert}
\renewcommand{\seq}{_{n \geq 1}}
\newcommand{\eeq}[2]{\begin{equation} \label{#1} #2 \end{equation}}
\newcommand{\spliteq}[1]{ \[ \begin{split} #1 \end{split} \]}
\newcommand{\act}[0]{\operatorname{Act}}
\renewcommand{\abs}[1]{\left\lvert#1\right\rvert}
\newcommand{\eps}{\varepsilon}
\title[Uniqueness of the giant component
]{ Supercritical percolation on finite transitive graphs I: \\Uniqueness of the giant component
}
\author{Philip Easo}
\thanks{Email: \href{mailto:peaso@caltech.edu}{peaso@caltech.edu} and \href{mailto:t.hutchcroft@caltech.edu}{t.hutchcroft@caltech.edu} \\ The Division of Physics, Mathematics and Astronomy, California Institute of Technology.}
\author{Tom Hutchcroft}
\date{\today}
\begin{document}

\begin{abstract}
Let $(G_n)\seq = ((V_n,E_n))\seq$ be a sequence of finite, connected, vertex-transitive graphs with volume tending to infinity. We say that a sequence of parameters $(p_n)\seq$ in $[0,1]$ is \emph{supercritical} with respect to Bernoulli bond percolation $\p_p^G$ if there exists $\varepsilon >0$ and $N<\infty$ such that
\[
	\p_{(1-\varepsilon)p_n}^{G_n} \bra{ \text{the largest cluster contains at least  $\varepsilon |V_n|$ vertices}} \geq \varepsilon
\]
for every $n\geq N$ with $p_n <1$.
We prove that if $(G_n)\seq$ is sparse, meaning that the degrees are sublinear in the number of vertices, then the supercritical giant cluster is unique with high probability in the sense that if $(p_n)\seq$ is supercritical then 
\[
	\lim_{n\to\infty}\p_{p_n}^{G_n} \bra{ \text{the second largest cluster contains at least $c|V_n|$ vertices} } = 0
\]
for every $c>0$.
This result is new even under the stronger hypothesis that $(G_n)\seq$ has uniformly bounded vertex degrees, in which case it verifies a conjecture of Benjamini (2001). Previous work of many authors had established the same theorem for complete graphs, tori, hypercubes, and bounded degree expander graphs, each using methods that are highly specific to the examples they treated. We also give a complete solution to the problem of supercritical uniqueness for \emph{dense} vertex-transitive graphs, establishing a simple  necessary and sufficient isoperimetric condition for uniqueness to hold.
\end{abstract}

\maketitle

\tableofcontents

\section{Introduction} \label{section:introduction}

Let $G=(V,E)$ be a countable graph that is connected and vertex-transitive, meaning that for all vertices $u,v \in V$  there is a graph automorphism $\phi \in \operatorname{Aut} G$ with $\phi(u)=v$. Given $p \in [0,1]$, \emph{Bernoulli bond percolation} $\p_p^G$ (abbreviated $\p_p$ when the choice of $G$ is clear from context) is the distribution of a random spanning subgraph $\omega$ formed by independently including each edge with probability $p$. We identify $\omega$ with an element of $\{  0,1 \}^E$ where $\omega(e) = 1$ means that the edge $e$ is present in $\omega$. The edges in $\omega$ are called \emph{open} and the rest are called \emph{closed}. We are interested primarily in the geometry of the connected components of $\omega$, which we refer to as \emph{clusters}.
Much of the interest in the model stems from the existence of a \emph{phase transition}: For infinite graphs, there is typically\footnote{We keep the meaning of `typically' intentionally vague. One very general conjecture \cite[Question 2]{beyond-euclid} is that $p_c<1$ for all (not necessarily transitive) infinite graphs with isoperimetric dimensional strictly greater than $1$.} a \emph{critical probability} $0<p_c<1$ such that every cluster is finite almost surely when $p< p_c$, while at least one infinite cluster exists almost surely when $p>p_c$. For large finite graphs, one typically observes a similar phase transition in which a \emph{giant} component, containing a positive proportion of all vertices, emerges as $p$ is varied through a small interval. The regime in which an infinite/giant cluster exists is known as the \emph{supercritical phase}. It is now known that the supercritical phase is always non-degenerate for bounded degree transitive graphs that are strictly more than one-dimensional in an appropriate coarse-geometric sense: this was proven for infinite graphs by Duminil-Copin, Goswami, Raoufi, Severo, and Yadin \cite{non-triviality} and for finite graphs by the second author and Tointon~\cite{hutchcroft+tointon}.

\medskip

Once one knows that the supercritical phase is non-degenerate, so that infinite/giant clusters exist for sufficiently large values of $p$, a central problem is to understand the \emph{number} of these clusters. 
For \emph{infinite} transitive graphs, this is the subject of a famous conjecture of Benjamini and Schramm~\cite{beyond-euclid} stating that the infinite cluster is unique for every $p>p_c$ if and only if the graph is \emph{amenable}. The `if' direction of this conjecture follows from the classical work of Aizenman, Kesten, and Newman~\cite{AKN} and Burton and Keane~\cite{burton-keane}, while the `only if' direction remains open in general; see e.g.\ \cite{MR4155221,MR3962879,MR3616205,MR1756965,MR3352259} for an overview of what is known.

\medskip

In contrast, for \emph{finite} transitive graphs, Benjamini \cite[Conjecture 1.2]{benjamini-unpublished} conjectured in 2001 that the giant cluster should \emph{always} be unique in the supercritical regime, irrespective of the geometry of the graph. Several works, some of which are very classical, have established versions of this conjecture in special cases including for complete graphs \cite{MR148055,MR756039}, hypercubes \cite{MR671140,MR1139488}, Euclidean tori of fixed dimension \cite{MR2213947}, and bounded degree expanders \cite{percolation-expanders}. Each of these works uses methods that are very specific to the example it treats, with the analysis of the tori $(\mathbb{Z}^d/n\mathbb{Z}^d)\seq$ in dimension $d\geq 3$ relying in particular on the important and technically challenging work of Grimmett and Marstrand \cite{grimmett-marstrand}. A related conjecture giving mild geometric conditions under which it should be impossible to have multiple giant clusters \emph{both above and at criticality} was subsequently stated in the influential work of Alon, Benjamini, and Stacey \cite[Conjecture 1.1]{percolation-expanders}.

\medskip

A central difficulty in the study of this conjecture, and in the study of percolation on finite graphs more generally, is that many of the most important qualitative tools used to study the infinite case, such as the ergodic theorem,  break down completely in the finite case. For example, adapting the Burton--Keane uniqueness proof to the case of finite graphs merely shows that each vertex is unlikely to have three distinct large clusters in a small vicinity around it -- a statement that need not in general be in tension with the existence of multiple giant components in a finite graph. Indeed, while the Burton--Keane proof applies to arbitrary \emph{insertion-tolerant} automorphism-invariant percolation processes, it is possible to construct insertion-tolerant automorphism-invariant percolation processes on the torus $(\Z/n\Z)^2$ that have multiple giant components with high probability.  In fact, for the highly asymmetric torus $(\Z/n \Z)\times (\Z/ 2^n \Z)$ one even has multiple giant clusters with good probability for Bernoulli percolation at appropriate values of $p$, although this arises as a feature of a discontinuous phase transition rather than of the supercritical phase \emph{per se}. See \cref{subsec:counterexamples} for further discussion of both examples. In light of these difficulties, any treatment of the uniqueness problem for finite transitive graphs must involve new techniques and use finer properties of supercritical percolation than in the infinite case.

\medskip

In this paper we resolve Benjamini's conjecture and hence also the supercritical case of the Alon--Benjamini--Stacey conjecture, giving a complete solution to the problem of supercritical uniqueness on large, finite, vertex-transitive graphs. In the forthcoming work in this series, we will prove moreover that the density of the giant component is concentrated, local, and equicontinuous in the supercritical regime and prove analogous theorems for the Fortuin-Kasteleyn random cluster model, Ising model, and Potts model.

\begin{defn} \label{defn:supercritical-percolation}

We will assume all graphs to be locally finite and to contain at least one vertex. Let $\cF$ be the set of all isomorphism classes of finite, connected, simple (i.e., not containing loops or multiple edges), vertex-transitive graphs. (We will usually suppress the distinction between graphs and their isomorphism classes as much as possible when this does not cause any confusion.) Given an infinite set $\cH\subseteq \cF$, a function $\phi:\cH\to \R$, and $\alpha \in \R$ we write $\lim_{G\in \cH}\phi(G)=\alpha$ or ``$\phi(G)\to \alpha$ as $G\to\infty$ in $\cH$'' to mean that for each $\eps>0$ there exists $N$ such that $|\phi(G)-\alpha|\leq \eps$ for every $G\in \cH$ with at least $N$ vertices, or equivalently that $\phi(G_n)\to \alpha$ for some (and hence every) enumeration $\cH=\{G_1,G_2,\ldots\}$ of $\cH$. Similar conventions apply to the definition of $\limsup_{G\in \cH}$, $\liminf_{G\in \cH}$, and limits that may be equal to $+\infty$ or $-\infty$.

\medskip

 % write $\lim_{G \in \mathcal H}$ to denote limits with respect to any (equivalently, every) particular enumeration of $\mathcal H$. We omit the base graph $G$ in expressions when this does not cause confusion. For example, we write $\lim_{G \in \mathcal H} \p_p$ to mean $\lim_{G \in \mathcal H} \p_{p(G)}^G$.

Let $G = (V,E)$ be a countable graph and consider a percolation configuration $\omega \in \{ 0,1 \}^E$. The connected components of $\omega$ are called \emph{clusters}. We write $K_u$ to denote the cluster containing the vertex $u$ and write $u \leftrightarrow v$ for the event that $K_u = K_v$. Given a subset $W$ of $V$, the \emph{volume} of $W$ is the number of vertices in $W$, denoted $\abs{W}$, while if $G$ is finite, the \emph{density} of $W$ is defined to be the ratio $\den{W} := \abs{W}/\abs{V}$. We write $K_1,K_2,\ldots$ for the clusters of $\omega$ in decreasing order of volume. (Note the slight abuse of notation: here $K_1$ does not mean the cluster of a vertex labelled `1'.)
% Now let $\mathcal H$ be an infinite set of finite connected transitive graphs. 

\medskip

Given an infinite set $\cH \subseteq \cF$, we say that an assignment of parameters $p_c:\mathcal H \to [0,1]$ is a \textbf{percolation threshold} if
\begin{enumerate}
  \item
 $\lim_{G \in \mathcal H} \p_{(1-\eps)p_c}^{G} \bra{ \den{K_1} \geq c } = 0$ for every $\eps,c>0$, and
  \item For every $\eps>0$ there exists $\alpha>0$ such that \[\lim_{G \in \mathcal H} \p_{(1+\eps)p_c}^G \bra{ \den{K_1} \geq \alpha } = 1,\] where we set $\p_p^G=\p_1^G$ for $p\geq 1$.
 \end{enumerate}
% for every $\eps > 0$ we have that
 % every $\alpha > 0$ satisfies $\lim_{H \in \mathcal G} \p_{(1-\eps)p_c} \bra{ \den{K_1} \geq \alpha } = 0$, but some $\alpha > 0$ satisfies $\lim_{H \in \mathcal G} \p_{(1+\eps)p_c} \bra{ \den{K_1} \geq \alpha } = 1$, where we set $\p_p=\p_1$ for $p>1$. 
Note that critical thresholds are \emph{not} unique (when they exist), but any two percolation thresholds $p_c,\tilde p_c:\cH\to [0,1]$ must satisfy
$p_c(G)\sim \tilde p_c(G)$ 
% \qquad \text{ 
as $G\to\infty$ in $\cH$. When a percolation threshold $p_c:\cH\to[0,1]$ exists, we say that $p:\mathcal H \to [0,1]$ is \emph{supercritical} if $\mathcal H' := \{ G \in \mathcal H : p(G) < 1 \}$ is finite or if $\mathcal H'$ is infinite and satisfies
\[\liminf_{G \in \mathcal H'} \frac{p(G)}{p_c(G)} > 1.\] We generalise this definition to include the case that $p_c$ does not exist by saying that $p$ is \textbf{supercritical} if $\mathcal H' := \{ G \in \mathcal H : p(G) < 1 \}$ is finite or if $\mathcal H'$ is infinite and there exists $\eps > 0$ such that 
\[
\liminf_{G \in \mathcal H'} \p_{(1-\eps)p}^G \bra{ \den{K_1} \geq \eps } \geq \eps.
\]
Note that these two definitions of supercriticality coincide when $\cH$ admits a threshold function, and in particular that the definition of supercriticality does not depend on the choice of threshold function. (Without the $(1-\eps)$ factor in $\p_{(1-\eps)p}^G$, these definitions would not always coincide, for example for the highly asymmetric torus discussed in Example 5.1, which has giant clusters with good probability at a percolation threshold bounded away from 1.) The reason for introducing the set $\mathcal H'$ is to ensure that every family has a supercritical sequence of parameters, namely the constant assignment $p(G):=1$ for all $G$. It will also be helpful to have the finitary version of this definition: Given a \emph{single} finite, connected, simple, vertex-transitive graph $G=(V,E)$ and given any $\eps > 0$, we say that a parameter $p \in [0,1]$ is \textbf{$\eps$-supercritical} for $G$ if $\p_{(1-\eps)p}^{G}(\den{K_1} \geq \eps) \geq \eps$ and $\abs{V} \geq 2 \eps^{-3}$. (There is some flexibility in how to choose this latter, technical condition that $\abs{V}$ is not too small.)

\medskip

We say that $\mathcal H$ has the \textbf{supercritical uniqueness property} if 
\[\lim_{G \in \mathcal H} \p_p^G \bra{ \den{K_2} \geq \eps } = 0\] for every supercritical $p:\cH\to[0,1]$ and every constant $\eps > 0$.

\end{defn}

We begin by stating our main result in the simplest-to-state and most interesting case when we have an infinite set $\mathcal H \subseteq \mathcal F$ that is \emph{sparse}, meaning that the average vertex degree $d(G) :=2|E(G)|/|V(G)|$ of $G \in \cH$ (which is the exact degree of every vertex because $G$ is regular) satisfies $d(G)=o(|V(G)|)$ as $G \to \infty$ in $\mathcal H$. Note that in particular, if $\mathcal H$ has uniformly bounded vertex degrees (i.e.\! $\sup_{G \in \mathcal H} d(G) < \infty$) then $\mathcal H$ is sparse.

\begin{thm} \label{thm:main_sparse}
	Let $\cH \subseteq \cF$ be an infinite set. If $\cH$ is sparse, then $\mathcal H$ has the supercritical uniqueness property.
\end{thm}

\begin{rk}
The restriction to simple graphs is not very important and could be replaced by e.g.\ the assumption that there are a bounded number of parallel edges between any two vertices.
\end{rk}

\begin{rk}
The Alon--Benjamini--Stacey conjecture \cite[Conjecture 1.1]{percolation-expanders} would follow immediately from \cref{thm:main_sparse} together with the plausible claim that the percolation phase transition is always continuous for bounded degree graph families satisfying the $\operatorname{diam}(G)=o(|V(G)|/\log |V(G)|)$ condition they consider. More formally, such a claim would state that if $\mathcal H \subseteq \mathcal F$ is an infinite set with uniformly bounded vertex degrees that satisfies this condition, and $p : \mathcal H \to [0,1]$ is any assignment of parameters such that $\liminf_{G \in \mathcal H}\p_{p(G)}^{G}(\|K_1\|\geq c)>0$ for some $c>0$, then $p$ is supercritical in our sense. Unfortunately such a claim seems to be completely beyond the scope of present techniques and is a major open problem even for e.g.\ the three-dimensional torus $(\Z/n\Z)^3$. Indeed, it appears to be an open problem to prove that there are not multiple giant components at criticality in this example.
\end{rk}

All the proofs in our paper are effective in the sense that they can in principle be used to produce explicit bounds on, say, the expected density of the second largest cluster. While we have not kept track of what these bounds are in all cases, we make note of the following simple explicit estimate implying \cref{thm:main_sparse} in the case that the graphs in question have bounded or subalgebraic vertex degrees.

\begin{thm} \label{thm:quantitative-sup}
There exists a universal constant $C$ such that if $G = (V,E)$ is a finite, simple, connected, vertex-transitive graph with vertex degrees bounded by $d$, and $p \in [0,1]$ is $\eps$-supercritical for $G$ for some $\eps>0$, then 
\[
	\p_{p}^{G} \left(\|K_2\| \geq \lambda  e^{ C\eps^{-18}} \sqrt{\frac{\log d}{\log |V|}} \right) \leq \frac{1}{\lambda}
\]
for every $\lambda \geq 1$. 
\end{thm}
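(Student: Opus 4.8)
Since $\p_p^G(\|K_2\|\ge\lambda\mu)\le 1/\lambda$ for every $\lambda\ge 1$ whenever $\mu\ge\e_p^G\|K_2\|$, the stated tail bound is equivalent to the first-moment estimate $\e_p^G\|K_2\|\le e^{C\eps^{-18}}\sqrt{\log d/\log|V|}$, and the plan is to obtain this from a second-moment identity. Writing $K_1,K_2,\dots$ for the clusters listed in decreasing order of size and letting $U,V$ denote independent uniformly random vertices of $G$, one has $\p_p^G(U\leftrightarrow V)=\e_p^G\bigl[\sum_i\|K_i\|^2\bigr]$, so by Cauchy--Schwarz
\[
\e_p^G\|K_2\|\le\bigl(\e_p^G\|K_2\|^2\bigr)^{1/2}\le\Bigl(\e_p^G\Bigl[\textstyle\sum_{i\ge2}\|K_i\|^2\Bigr]\Bigr)^{1/2}=\bigl(\p_p^G(U\leftrightarrow V)-\e_p^G\|K_1\|^2\bigr)^{1/2}.
\]
Since $\{U,V\}\subseteq K_1$ forces $U\leftrightarrow V$, we have $\p_p^G(U\leftrightarrow V)-\e_p^G\|K_1\|^2=\p_p^G(U\leftrightarrow V,\ \{U,V\}\not\subseteq K_1)$, so it suffices to prove the \emph{two-point connection estimate}
\[
\p_p^G\bigl(U\leftrightarrow V,\ \{U,V\}\not\subseteq K_1\bigr)\le e^{2C\eps^{-18}}\,\frac{\log d}{\log|V|};
\]
in words, two uniformly chosen vertices are very unlikely to be joined by a cluster other than the giant. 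This is the quantitative heart of the argument, and the square root in the theorem is exactly the Cauchy--Schwarz step above.

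To bound this probability I would decompose according to the density of the cluster $C(U)$ of $U$. Fix a threshold $\eta=\eta(\eps)$. The part on which $\|C(U)\|\le\eta$ contributes at most $\eta$, since summing over $v$ rewrites it as $\tfrac1{|V|}\,\e_p^G\bigl[|C(U)|\,\mathbf{1}\{|C(U)|\le\eta|V|\}\bigr]\le\eta$. On the complementary event the cluster joining $U$ and $V$ has density at least $\eta$, and here the mechanism is \emph{sprinkling}: realise the configuration at $p$ as $\omega_0\cup\omega'$, where $\omega_0\sim\p_{p_0}^G$ with $p_0$ slightly below $p$ but still $\eps'$-supercritical for a suitable $\eps'=\eps'(\eps)$, and $\omega'$ is an independent sprinkle at rate $\gtrsim\eps p$. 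By the sharp-giant property of the preceding section, $\omega_0$ already contains a giant cluster $\mathcal G$ of density $\ge\theta(\eps)$ with high probability, and one wants the sprinkle $\omega'$ to merge \emph{every} cluster of density $\ge\eta$ into $\mathcal G$, so that outside a small-probability event no such cluster survives as ``a cluster other than $K_1$''. Carrying this out uses the sharp-giant property (itself obtained from a bounded number of sprinkling rounds) together with further sprinkling to absorb mesoscopic clusters into $\mathcal G$; tracking constants through these $O(1)$ rounds, each of which degrades the relevant densities by a $\mathrm{poly}(\eps)$ factor, is what produces the $e^{O(\eps^{-18})}$ constant.

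The main obstacle is controlling the probability that the merging step fails. A priori, two macroscopic clusters in a vertex-transitive graph need not be anywhere near one another -- long thin torii are the prototype -- so there may be no short path available to sprinkle, and a naive bound costs a prohibitive factor $q^{\operatorname{diam}(G)}$. Overcoming this is the purpose of the \emph{structure theory of molecular sequences}: working at the coarse scale $r\asymp_\eps\log|V|/\log d$, which is on the order of (a lower bound for) the diameter of $G$ and at which balls $B(v,r)$, of size $|B(v,r)|\le d^{r+1}$, can still be as large as a positive power of $|V|$, one uses that theory to show that the giant, and more generally any cluster of density $\ge\eta$, is \emph{coarsely dense}: it fills an $\Omega_\eps(1)$ proportion of essentially every radius-$r$ ball. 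Two coarsely dense clusters can then be joined, after sprinkling, along a chain of $O(r)$ overlapping radius-$r$ balls, each link failing with probability $O_\eps(1/r^2)$; a union bound over the links gives total failure probability $O_\eps(1/r)=O_\eps(\log d/\log|V|)$, which feeds back into the two-point estimate. Optimising $\eta$ and $r$ against the competing requirements -- $r$ large enough that balls are macroscopic, small enough that the connecting chain is short -- then gives the stated bound. I expect the genuinely delicate point to be showing that the molecular structure really does force coarse density, uniformly over all bounded-degree vertex-transitive graphs; everything downstream of that is sprinkling and bookkeeping.
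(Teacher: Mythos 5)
Your two opening reductions are sound: Markov's inequality reduces the claimed tail bound to $\e_p^G\|K_2\|\le e^{C\eps^{-18}}\sqrt{\log d/\log|V|}$, and the Cauchy--Schwarz step correctly rewrites this in terms of the two-point quantity $\p_p^G(U\leftrightarrow V,\{U,V\}\not\subseteq K_1)$. But from that point your route diverges entirely from the paper's, and the part you flag as ``the quantitative heart of the argument'' is a genuine gap. The paper does \emph{not} prove any two-point estimate of the form you need, nor does it prove the $\sqrt{\log d/\log|V|}$ factor via Cauchy--Schwarz; that factor comes from Talagrand's sharp-threshold inequality, which bounds the width of the interval over which $\p_p(\|K_1\|\ge\alpha)$ passes from $\delta$ to $1-\delta$ (\cref{prop:sharp_giant_subpolynomial}). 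The theorem is then obtained by plugging this $\Delta(\eps)\asymp\sqrt{\log d/\log|V|}$ together with the two-point \emph{lower} bound $\tau(\eps)=e^{-10^6\eps^{-18}}$ (\cref{thm:2point_main}) into the main quantitative uniqueness estimate \cref{thm:main_single_graph}, which is proved by the concentration-at-a-lower-parameter and sandcastle arguments of \cref{section:deducing_uniqueness}, not by any direct merging-via-sprinkling scheme.

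The concrete problems with your Step 3 are as follows. First, there is no ``coarse density'' lemma in the paper: nothing in Sections 2--4 asserts that a cluster of density $\ge\eta$ fills a positive proportion of essentially every ball of radius $\asymp\log|V|/\log d$, and such a statement would require substantial new work. (It is far from obvious on conditioning: the clusters of density $\ge\eta$ other than the giant are conditioned on a rare event, and their geometry under that conditioning is exactly what is at issue.) Second, the sprinkling step as you describe it is circular: the ``dense clusters of density $\ge\eta$'' you wish to merge into $\mathcal G$ are clusters of $\omega_0\cup\omega'$, which depend on the sprinkle $\omega'$ itself, so you cannot fix a candidate collection of sets in advance and union-bound over the failure of $\omega'$ to connect each one to $\mathcal G$. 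The paper's Section 3 goes to considerable lengths (the non-constructive choice of the lower parameter $q$ via pigeonhole, the definition and use of sandcastles, the localization estimate \cref{lem:localization}) precisely to avoid the need for any such direct geometric sprinkling argument, which is known to be delicate and example-specific (Grimmett--Marstrand for tori, different techniques for hypercubes and expanders). In short: the reductions are correct but the proof of the two-point estimate would need to be built from scratch, and the approach you sketch for it has both a missing lemma and a logical flaw.
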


Note that this bound is only useful under the subalgebraic degree condition $\log d \ll \log |V|$. The constant given by our proof is fairly large, of order around $10^6$. 

\medskip

This bound has not been optimized and is known to be very far from optimal in classical examples. Indeed, the second largest cluster in supercritical percolation is known to be of order $\Theta(\log |V|)$ with high probability on both the complete graph \cite{MR148055,MR756039} and the hypercube \cite{MR1139488}, while for a Euclidean torus of fixed dimension $d$, it is of order $\Theta\bigl((\log |V|)^{d/(d-1)}\bigr)$ \cite{MR2213947}. Similar results are established for a large class of \emph{dense} graphs in \cite{dense}. 
% It is plausible that the two-dimensional torus is worst-case, at least among finite, vertex-transitive graphs whose vertex degrees are uniformly bounded, which would imply that the second largest cluster in supercritical percolation is always of order $O((\log |V|)^2)$ with high probability. This is related to a number of difficult-seeming open problems concerning the tail of the volume of finite clusters in supercritical percolation on infinite graphs, see \cite{contreras2021supercritical} and \cite[Section 5.3]{MR4243018}. 

\begin{rk}

Let us now explain the relationship between our theorem and the Benjamini--Schramm $p_c<p_u$ conjecture \cite{beyond-euclid}. Suppose $(G_n)\seq$ is a bounded degree expander sequence converging locally to some infinite nonamenable transitive graph $G$. One may deduce either from our results or those of \cite{percolation-expanders} (see also \cite{MR4275958}) that there is always a unique giant component with high probability for supercritical percolation on $G_n$, a result that seems to be in tension with the conjectured existence of a non-uniqueness phase for percolation on the limit graph $G$.
Naively, one might think that our definition of supercriticality for finite graphs should therefore be thought of more properly as an analogue of the \emph{uniqueness phase} ($p>p_u$) for infinite graphs.

This is misleading. Indeed, it was proven in \cite{MR2773031} that if $(G_n)\seq$ is a sequence of transitive, bounded degree expanders converging to an infinite, transitive, nonamenable graph $G$, then a sequence $(p_n)\seq$ is supercritical if and only if $\liminf p_n > p_c(G)$. The uniqueness/non-uniqueness transition on the limit graph $G$ does manifest itself in the approximating finite graphs $G_n$, but as a transition in the \emph{metric distortion} of the giant component rather than its uniqueness: the length of the path connecting two neighbouring vertices of $G_n$ given that both vertices belong to the giant is tight as $n\to\infty$ when $p>p_u(G)$ and is not tight when $p_c(G) < p < p_u(G)$. In the second case, the open path connecting two such vertices in $G_n$ has good probability to be very long, thus the two vertices become disconnected with positive probability in the limit. See  \cite{MR2384409} for related results and open problems for hypercube percolation.

\end{rk}

\textbf{The dense case.}
We now discuss how our results extend to \emph{dense} graphs, where vertices have degree proportional to the number of vertices. In contrast to \cref{thm:main_sparse}, it is not true in general that dense graph families have the supercritical uniqueness property. Suppose for example that $G_n$ is the Cartesian product of the complete graphs $K_2$ and $K_n$ and that $p_n = 2/n$ for every $n\geq 1$. Then we have by the classical theory of Erd\H{o}s--R\'enyi random graphs that each copy of $K_n$ contains a giant component with high probability, while the number of `horizontal' edges connecting the two copies of $K_n$ converges in distribution to a Poisson$(2)$ random variable and is therefore equal to zero with probability bounded away from zero as $n\to\infty$. Thus, the number of giant clusters in this example is unconcentrated and can be equal to either one or two, each with good probability.

\medskip

Our next main result shows that examples of roughly this form are the only transitive counterexamples to the supercritical uniqueness property. 

\medskip

We will in fact characterise the failure of the supercritical uniqueness property for dense vertex-transitive graphs in two equivalent ways. We say that an infinite set $\cH \subseteq \cF$ is \emph{dense} if $\liminf_{G \in \cH}|E(G)|/|V(G)|^2>0$; this is equivalent to the vertex degree $d(G)$ growing linearly in the number of vertices in the sense that $\liminf_{n\to\infty} d(G)/|V(G)| >0$.

\begin{defn}
Let $\mathcal H \subseteq F$ be a set. Given $m \in \{2,3,\dots \}$, we say that $\cH$ is \emph{$m$-molecular} if it is infinite and dense and there exists a constant $C < \infty$ such that for each $G \in \mathcal H$ there exists a set of edges $F \subseteq E(G)$ satisfying the following conditions:
\begin{enumerate}
		\item $G\setminus F$ has $m$ connected components;
		\item $F$ is invariant under the action of $\operatorname{Aut}G$;
		\item $\abs{F} \leq C \abs{V(G)}$.
			\end{enumerate}
These conditions imply that the $m$ connected components of $G \setminus F$ are dense, vertex-transitive, and isomorphic to each other. For example, the family of Cartesian products $\{K_n \square K_2 : n \geq 1\}$ discussed above is $2$-molecular.
	We say $\mathcal H$ is \emph{molecular} if it is $m$-molecular for some $m\geq 2$.
\end{defn}

\begin{defn}
Let $G=(V,E)$ be a finite graph. For each set $A \subseteq V$, we write $\partial_E A$ for the set of edges that have one endpoint in $A$ and the other in $V\setminus A$. For each $\theta \in (0,1/2]$, the quantity $\textsc{Separator}(G,\theta)$ is defined to be
\begin{multline*}
\textsc{Separator}(G,\theta):=\\\min\left\{ |\partial_E A| : \theta \sum_{v\in V} \deg(v) \leq \sum_{v\in A} \deg(v) \leq (1-\theta) \sum_{v\in V} \deg(v)\right\}.
\end{multline*}
In other words, $\textsc{Separator}(G,\theta)$ is the minimal number of edges needed to cut $G$ into two pieces of roughly equal size. We say that a set $\mathcal H$ of isomorphism classes of finite, simple graphs has \emph{linear $\theta$-separators} if $\mathcal H$ is infinite and $\limsup_{G \in \mathcal H}\textsc{Separator}(G,\theta)/|V(G)|<\infty$.
\end{defn}

The following theorem provides a complete solution to the problem of supercritical uniqueness for Bernoulli bond percolation on finite vertex-transitive graphs and implies \cref{thm:main_sparse} as a special case.

\begin{thm} \label{thm:main}
For every infinite set $\mathcal H \subseteq \mathcal F$, the following are equivalent:
\begin{enumerate}
	\item[\emph{(i)}] $\mathcal H$ does not have the supercritical uniqueness property;
	\item[\emph{(ii)}] $\mathcal H$ contains a subset that is molecular;	
	\item[\emph{(iii)}] $\mathcal H$ contains a subset with linear $1/3$-separators;
	\item[\emph{(iv)}] $\mathcal H$ contains a dense subset with linear $\theta$-separators for some $\theta \in (0,1/2]$.
\end{enumerate}
\end{thm}

 The dense case of this result sharpens the transitive case of a theorem of Bollob\'as, Borgs, Chayes, and Riordan \cite{dense}, who proved supercritical uniqueness for any (not necessarily transitive) dense graph sequence converging to an irreducible graphon. In our language, their result states that a dense graph family has the supercritical uniqueness property whenever it does not have any subquadratic separators, i.e., whenever \[\liminf_{G \in \mathcal H}\textsc{Separator}(G,\theta)/|V(G)|^2 > 0\] for every $\theta \in (0,1/2]$ (see \cite[Lemma 7]{dense}). In fact, since they also prove that the giant cluster density is zero at the percolation threshold, their results imply uniqueness of the giant cluster for \emph{all} (not necessarily supercritical) assignments of parameters. The same authors also established a formula for the limiting critical probability of dense graph sequences that we will use to prove the implication (iv) $\Rightarrow$ (i) of \cref{thm:main}.  Further comparison of our results with those of \cite{dense} is given in \cref{rem:dense}.

\begin{rk}
  In \cite{easo2022existence} the first author has built on the results of the present paper to characterise which infinite subsets of $\mathcal F$ admit a percolation threshold. The obstacle to having a percolation threshold turns out to be the presence of molecular subsets for \emph{infinitely many} values of $m \in \{2,3,\ldots\}$. In particular, every infinite subset of $\mathcal F$ that is sparse admits a percolation threshold. So a posteriori, for \cref{thm:main_sparse} it suffices to work with the original (more natural) definition of supercritical assignments of parameters, which refers to the percolation threshold, rather than the more general definition. Note the surprising logical order here: we first proved uniqueness of the supercritical giant cluster (in the present paper), then this was used to prove that there exists a percolation threshold for the emergence of a giant cluster (in \cite{easo2022existence}). 
\end{rk}

\subsection{About the proof and organization}

We now briefly overview the structure of the paper and outline the proofs of the main steps.

\medskip

\noindent\textbf{Section 2: Lower bounds on point-to-point connection probabilities.}
In this section we prove that for $\eps$-supercritical percolation on any finite, simple, connected, vertex-transitive graph, we always have a uniform lower bound $\p_p(x\leftrightarrow y) \geq \delta(\eps)>0$ on the probability that any two given vertices are connected. This was previously known only in the bounded degree case, with constants depending on the degree. Our argument starts by partitioning the vertex set into classes within which we have such a lower bound then recursively merges these classes until a single class contains the entire graph. The merging step makes use of a new high-degree version of insertion-tolerance, which allows us to open a single edge in a sufficiently large random set of edges.

\medskip

\noindent\textbf{Section 3: Uniqueness under the sharp density property.}
In this section we prove that the supercritical uniqueness property holds for any infinite set $\mathcal H \subseteq \mathcal F$ satisfying the \emph{sharp density property}, meaning that $\p_p(\|K_1\|\geq \alpha)$ has a sharp threshold for every $\alpha \in (0,1]$ in an appropriately uniform sense. This section is at the heart of the paper and contains the most significant new arguments. 

\medskip

The proof has two parts. First, given any particular supercritical parameter $p$, we use the sharp density property to (non-constructively) deduce the existence of a smaller parameter $q \leq p$ such that under $\p_q$ there is a giant cluster whose density is \emph{concentrated}, i.e., lies in a small interval with high probability. The point-to-point connection probability lower bound easily implies that this giant cluster is the \emph{unique} giant cluster under $\p_q$ with high probability. 

\medskip

The remainder of the proof consists in showing that non-uniqueness of the giant cluster under $\p_p$ would imply non-concentration of the density of the giant cluster under $\p_q$, establishing a contradiction. To this end we introduce a new object called a \emph{sandcastle}, which is a large subgraph that is not resilient to $q/p$-bond percolation. We observe that under the hypothesis that there are at least two giant clusters under $\p_p$ with good probability, at least one of these clusters must be a sandcastle with good probability. As we pass from $\p_p$ to $\p_q$ in the standard monotone coupling of these measures, this sandcastle-cluster disintegrates into small clusters with good probability by definition. On this event, the giant cluster under $\p_q$ is constrained to the complement of this sandcastle-cluster from $\p_p$, where edges are distributed (conditionally) independently. We argue that if this is the case then there must be a subset of $V$ with density significantly less than $1$ that has good probability under $\p_q$ to contain a cluster of approximately the same size as the typical global size of the largest cluster in $V$. Finally, we use the existence of this subset together with Harris' inequality and the point-to-point lower bound to deduce that $\|K_1\|$ is abnormally large with good probability under $\p_q$, contradicting the previously established concentration property.

\medskip

In the final subsection of this section, we verify that subalgebraic degree graphs have the sharp density property, completing the proof of the main theorems in this case and establishing the quantitative estimate \cref{thm:quantitative-sup}.

\medskip

\noindent\textbf{Section 4: Non-molecular graphs have the sharp density property.}
In this section we prove that the only way for an infinite subset of $\mathcal F$ to \emph{fail} to have the sharp density property is for it to contain a molecular subset, completing the proof of the main theorem.

\medskip

Our argument uses a theorem of Bourgain \cite{bourgain} formalising the heuristic that increasing events without sharp thresholds are heavily influenced by the state of a bounded number of edges. In our case the event is the existence of a giant cluster of a given density. We apply a delicate sprinkling argument to iteratively reduce the size of this bounded-size set of edges until it contains a single edge. A novel trick in this induction is that during each iteration we use the second author's universal tightness theorem \cite{uni-tightness} and the high-degree analogue of insertion-tolerance from Section 2 to stick large (but not necessarily giant) clusters to both endpoints of an edge in the current set, allowing the small number of sprinkled edges to have a disproportionately large effect. This is the most technical part of the paper.
Once the set of edges reaches a singleton, we apply Russo's formula to derive a contrasting \emph{lower bound} on the sharpness of the threshold for our event. For this lower bound to not contradict our original upper bound, the graph in question must be dense, completing the proof of \cref{thm:main_sparse}; the proof of the implication (i) $\Rightarrow$ (ii) of \cref{thm:main} in the dense case relies on a second, rather subtle application of the sprinkling technology we develop to prove that the graph must in fact be molecular. Finally we show in \cref{subsec:Bollobas} that the remaining non-trivial implication (iv) $\Rightarrow$ (i) of \cref{thm:main} follows easily from the results of \cite{dense}.

\medskip

We end the paper with some further discussion and closing remarks in \cref{section:things_to_find_a_home_for}.

\section{Lower bounds on point-to-point connection probabilities} \label{section:the_two-point_connection_property}

The goal of this section is to prove that point-to-point connection probabilities are uniformly bounded away from zero in $\eps$-supercritical percolation on finite vertex-transitive graphs, where all relevant constants depend only on the parameter $\eps>0$.  Given a finite graph $G$ and constants $0<\alpha,\delta \leq 1$, we define 
\begin{equation}
\label{eq:pcdef}
	p_c(\alpha,\delta) =p_c^G(\alpha,\delta)= \inf \bigl\{ p \in [0,1] : \p_p \bra{ \den{K_1} \geq \alpha } \geq \delta \bigr\},
\end{equation}
so that $p$ is $\eps$-supercritical if and only if $(1-\eps)p\geq p_c(\eps,\eps)$ and $\abs{V(G)} \geq 2 \eps^{-3}$.
By continuity and strict monotonicity, $p_c(\alpha,\delta)$ is equivalently the unique parameter satisfying $\p_{p_c(\alpha,\delta)} ( \den{K_1} \geq \alpha) = \delta$. 

\begin{thm}
\label{thm:2point_main}
Let $G=(V,E)$ be a finite, connected, simple, vertex-transitive graph and let $\eps>0$. If $|V| \geq 2 \eps^{-3}$ and $p\geq p_c(\eps,\eps)$ then
\[
\p_p(u \leftrightarrow v) \geq \tau(\eps):= \exp\left[-10^5\cdot\eps^{-18}\right]
\]
for every $u,v\in V$.
\end{thm}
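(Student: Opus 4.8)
By monotonicity of $\p_p$ in $p$ it suffices to treat $p=p_c(\eps,\eps)$, so that $\p_p(\|K_1\|\ge\eps)\ge\eps$. Let $u,v$ be independent uniform vertices of $V$, with joint law $\p_{u,v}$. Conditioning on the percolation configuration $\omega$ gives $\p_{u,v}(u\leftrightarrow v\mid\omega)=\sum_C\|C\|^2$, the sum being over the clusters $C$ of $\omega$, and since $K_1$ is one of these clusters,
\[
\e_p\,\p_{u,v}(u\leftrightarrow v)\;=\;\e_p\sum_C\|C\|^2\;\ge\;\e_p\|K_1\|^2\;\ge\;\eps^2\,\p_p(\|K_1\|\ge\eps)\;\ge\;\eps^3 .
\]
Equivalently, by vertex-transitivity, $\tfrac1{|V|}\sum_{v\in V}\p_p(\root\leftrightarrow v)\ge\eps^3$, so the \emph{average} point-to-point connection probability is bounded below. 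The entire difficulty is to upgrade this to a bound that holds uniformly over \emph{all} pairs: a vertex-transitive graph can still be very inhomogeneous in its connection structure, and a priori a small (but $\Aut G$-invariant) family of pairs could be connected with probability close to zero.

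\textbf{Partition into classes and the merging induction.} I would carry along an $\Aut G$-invariant partition of $V$ into equal-size classes, together with a constant $\delta>0$ such that $\p_p(x\leftrightarrow y)\ge\delta$ whenever $x,y$ lie in a common class. Two facts are used repeatedly. First, running the averaging computation inside a single class shows that with probability bounded below in terms of $\delta$ some cluster occupies a definite fraction of that class; since the setwise stabiliser of a block of imprimitivity acts transitively on the block, this probability does not depend on the chosen vertex. Second, if $\cV_i\neq\cV_j$ are two classes and $x\in\cV_i$, $y\in\cV_j$ satisfy $\p_p(x\leftrightarrow y)\ge\gamma$, then Harris' inequality yields $\p_p(x'\leftrightarrow y')\ge\gamma\delta^2$ for all $x'\in\cV_i$, $y'\in\cV_j$ by routing through $x'\leftrightarrow x\leftrightarrow y\leftrightarrow y'$, so such a pair of classes can be merged at a controlled cost. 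The proof is then a recursion that replaces the current partition by a strictly coarser $\Aut G$-invariant one, keeping track of the resulting constant, until only the trivial partition $\{V\}$ remains and $\delta$ has become the asserted $\tau(\eps)$. The key point — where the averaging bound re-enters — is that as long as the number of classes exceeds a threshold depending only on $\eps$, a uniform random pair of vertices falls in distinct classes with probability at least $1/2$, so some ordered pair of classes must carry connection probability $\gtrsim\eps^3$ and hence be mergeable; this keeps the number of merges, and so the number of times the constant is degraded, bounded purely in terms of $\eps$.

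\textbf{High-degree insertion-tolerance in the merging step.} Harris' inequality by itself needs an honest pair of connected vertices across the two classes; producing one when the averaging argument supplies only \emph{large but disjoint} clusters in the two classes is where a genuinely new tool is required, since on a dense graph a single edge has marginal probability as small as $\asymp|V|^{-1}$ and classical insertion-tolerance says nothing. I would first prove a high-degree analogue: on a vertex-transitive graph, conditionally on the configuration off a sufficiently large $\Aut G$-invariant set $S$ of edges, the probability that at least one edge of $S$ is open is bounded below by an absolute constant. Then, conditioning on the edges \emph{not} between $\cV_i$ and $\cV_j$, Harris forces (with probability bounded below) a cluster meeting a definite fraction of $\cV_i$ on one side and a cluster meeting a definite fraction of $\cV_j$ on the other; by transitivity the bipartite graph between $\cV_i$ and $\cV_j$ is semi-regular, so there are many potential edges between these two large vertex sets, and the high-degree insertion-tolerance lemma opens one of them, joining the two clusters. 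Averaging over $\Aut G$ and over the choice of endpoints promotes this to the within-class bound for the merged partition.

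\textbf{Main obstacle.} The crux is the high-degree regime: establishing the random-set insertion-tolerance lemma quantitatively while keeping it compatible with the $\Aut G$-equivariance needed to symmetrise, and arranging that the two large clusters produced above really do span enough of their classes that the semi-regular bipartite structure between the classes is guaranteed to contain edges joining them. The remaining work is the bookkeeping that keeps the number of merging rounds — hence the number of times the connection constant is squared or multiplied down — bounded by an explicit function of $\eps$; carrying the constants through this bookkeeping is what produces the stated value $\tau(\eps)=\exp[-10^6\eps^{-18}]$.
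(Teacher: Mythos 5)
Your outline reproduces the paper's architecture faithfully at a high level: both start from the averaging bound $\frac{1}{|V|}\sum_v\p_p(\root\leftrightarrow v)\ge\eps^3$, both carry an $\Aut G$-invariant partition into equal-size blocks (the paper's blocks are the connected components of the auxiliary graph $G^*$ with edges $\{u,v\}$ such that $\p_p(u\leftrightarrow v)\geq\tau$), both merge blocks using a quantitative high-degree insertion-tolerance estimate (the paper's \cref{prop:insertion-tolerance}, which is precisely the lemma you describe), and both iterate $O(\log(1/\eps))$ times.

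The gap is in how you select which pair of blocks to merge, and it is a real one. You re-invoke the averaging bound to argue ``some ordered pair of classes must carry connection probability $\gtrsim\eps^3$,'' and you tie this to the fraction of random pairs lying in distinct classes. This does not follow: once the classes have density of order $\eps^3$ --- which they do already after the initial Markov step --- the entire mass $\E_{u,v}\p_p(u\leftrightarrow v)\geq\eps^3$ can be accounted for by within-class pairs, since the within-class contribution is $\sum_i\|V_i\|^2=1/m$ when there are $m$ equal-size classes. The observation that two random vertices fall in distinct classes with probability $\geq 1/2$ does not push any of that mass across classes. The same issue reappears in your merging step: after singling out $\cV_i$ and $\cV_j$ you appeal to ``the bipartite graph between $\cV_i$ and $\cV_j$ is semi-regular, so there are many potential edges,'' but nothing you have done so far guarantees that $G$ has \emph{any} edges between the two particular blocks you picked (and the bipartite graph between two blocks of an imprimitive action need not be semi-regular anyway, since the setwise stabiliser of $\cV_i$ need not stabilise $\cV_j$).

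The paper's proof of \cref{lem:coarsening} closes this gap by using the connectivity of $G$ itself rather than the averaging bound. Since $G$ is connected and the components of $G^*$ are $\Aut G$-invariant, there is a $G$-edge leaving each component $C$; by vertex-transitivity \emph{every} vertex of $C$ is incident to such an edge, so $|\partial_E C|\geq|C|$, and by pigeonhole over the $|V|/|C|$ components there is a single component $C'$ receiving at least $|C|^2/|V|$ of these edges. That edge supply is what makes the insertion-tolerance estimate effective: with $p\geq 1/2|V|$ (forced by $p\geq p_c(\eps,\eps)$ via \cref{lem:pc_lower}), the set $I$ of $C$--$C'$ edges has $p|I|$ bounded below, and one of them can be opened. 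So the right fix for your argument is to replace the averaging-based selection of $(\cV_i,\cV_j)$ with connectivity-plus-pigeonhole; the averaging bound is used once, only to initialise the partition, and never again.

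A smaller mismatch: you posit an all-pairs within-class lower bound $\delta$ as the running invariant, whereas the paper maintains the weaker invariant that the $\tau$-ball around each vertex has density $\geq 2^{-k}$ and upgrades it to an all-pairs bound inside each $G^*$-component via the dense-diameter bound (\cref{lem:dense-diam}) combined with Harris--FKG. These are interchangeable, but if you want to carry the all-pairs version you still need the diameter lemma to close the bookkeeping when the component is large.
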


The exact value of this bound is not important for our purposes, and we have not attempted to optimize the relevant constants. For \emph{bounded degree} graphs, a similar estimate follows from an argument essentially due to Schramm, which is recorded in \cite{benjamini-unpublished} and in more detail in \cite[Lemma 2.1]{hutchcroft+tointon}. This argument yields in particular that if $G=(V,E)$ is a finite vertex-transitive graph and $p \geq p_c(\eps,\eps)$ then 
\begin{equation}
\label{eq:bounded_degree_two_point}
\p_p(u \leftrightarrow v) \geq \exp\left[ -3 \left(\frac{2}{\eps^2}\vee \frac{1}{p}\right) \log \left(\frac{2}{\eps^2}\vee \frac{1}{p}\right)\right]
\end{equation}
for every $u,v\in V$.
This bound is adequate for our purposes in the bounded degree case, in which the condition $p \geq p_c(\eps,\eps)$ bounds $p$ away from zero when $|V|\geq 2\eps^{-3}$ by \cref{lem:pc_lower} below. As such, readers who are already familiar with \eqref{eq:bounded_degree_two_point} and are only interested in the bounded degree case of our results may safely skip the remainder of this section. The estimate \eqref{eq:bounded_degree_two_point} does \emph{not} yield a uniform lower bound on the two-point function in the high-degree case however, making a more refined analysis necessary at this level of generality. 

\begin{remark}
The assumption that $G$ is simple is not really needed for this theorem to hold: the proof works whenever $G$ has degree at most $|V|$, and yields a similar statement (with different constants) under the assumption that the degrees are bounded by $C|V|$ for some constant $C$. No such uniform two-point lower bound holds without this assumption, as can be seen by taking the product $K_n \square K_2$ and replacing each edge of $K_n$ by a large number of parallel edges.
\end{remark}

\begin{remark}
For \emph{infinite} transitive graphs, it was proven by Lyons and Schramm \cite{MR1742889} (in the unimodular case) and Tang \cite{MR4038049} (in the nonunimodular case) that there is a unique infinite cluster at $p$ if and only if $\inf_{x,y}\p_p(x \leftrightarrow y)>0$. As such, one might naively expect that we could deduce our main theorems on uniqueness directly from \cref{thm:2point_main} via a similar argument. This does not appear to be the case: firstly, we note that there \emph{do} exist large finite transitive graphs and values of $\eps$ such that there are multiple giant components with good probability at certain values of $p \geq p_c(\eps,\eps)$, despite there always being a uniform lower bound on the two-point function at such values of $p$ by \cref{thm:2point_main}. Indeed, the product $K_n \square K_2$ and the elongated torus $(\Z/n\Z)\times (\Z/2^n \Z)$ both have this property. Secondly, the proofs of \cite{MR1742889,MR4038049} both rely essentially on \emph{indistinguishability theorems} that are of an ergodic-theoretic nature and do not generalize to the finite-volume setting.
\end{remark}

\medskip

We will deduce \cref{thm:2point_main} as an analytic consequence of the following inductive lemma. Recall that we write $\|A\|=|A|/|V|$ for the density of a set of vertices in a finite graph.

\begin{lemma}[Two-point induction step] \label{lem:coarsening}
Let $G=(V,E)$ be a finite, connected, vertex-transitive graph and let $p\geq 1/(2|V|)$. If $\tau>0$ and $k\geq 1$ are such that  $\|\{v\in V: \p_p(u\leftrightarrow v) \geq \tau\}\| \geq 2^{-k}$ for every $u\in V$ then there exists $\ell \in \{0,\ldots,k-1\}$ such that
\[
\Bigl\|\Bigl\{v \in V : \p_p(u \leftrightarrow v) \geq \tau^{3^3 2^{k-\ell}} 2^{-2\ell-10} \Bigr\}\Bigr\|  \geq 2^{-\ell}
\]
for every $u\in V$.
\end{lemma}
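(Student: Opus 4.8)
The idea is to iterate a ``connection-ball'' operation until either we have already gained enough density, or we run into a genuine partition of $V$ into small pieces, which we then break by a single insertion-tolerance move. For $s>0$ and $u\in V$ write $A_s(u)=\{v\in V:\p_p(u\leftrightarrow v)\ge s\}$, let $\Gamma_\tau$ be the graph on $V$ with edge set $\{\{x,y\}:\p_p(x\leftrightarrow y)\ge\tau\}$, and let $B_i(u)$ be the ball of radius $i$ around $u$ in $\Gamma_\tau$ (so $B_0(u)=\{u\}$, $B_1(u)=A_\tau(u)$, and $B_{i+1}(u)=\bigcup_{v\in B_i(u)}A_\tau(v)$). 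Three elementary observations set everything up: (i) by vertex-transitivity $\|B_i(u)\|$ is independent of $u$, and since $\Gamma_\tau$ is reflexive and symmetric this density is non-decreasing in $i$; (ii) chaining connection events along a $\Gamma_\tau$-path of length $i$ and applying Harris's inequality gives $\p_p(u\leftrightarrow w)\ge\tau^{\,i}$ for every $w\in B_i(u)$; (iii) once the density stabilises, the sets $\{B_\infty(u):u\in V\}$ are the connected components of $\Gamma_\tau$, they partition $V$ into $m$ classes of equal density $\beta=1/m$ with $\beta\ge\|A_\tau(u)\|\ge 2^{-k}$ (so $m\le 2^k$), and, since each vertex has at least $2^{-k}|V|$ of its $\Gamma_\tau$-neighbours inside its own class, the standard minimum-degree diameter bound shows that every pair in a common class is joined in $\Gamma_\tau$ by a path of length $D\le 3\beta 2^k$, hence $\p_p(u\leftrightarrow v)\ge\tau^{\,D}$ within a class.

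First suppose $m\le 2^{k-1}$, and put $\ell=\lceil\log_2 m\rceil\in\{0,\dots,k-1\}$. Each class has density $\beta\ge 2^{-\ell}$, and since $\beta<2^{-(\ell-1)}$ we get $D\le 3\beta 2^k<6\cdot 2^{k-\ell}<27\cdot 2^{k-\ell}$; as $\tau\le 1$ this gives $\|A_{\tau^{27\cdot 2^{k-\ell}}2^{-\ell-8}}(u)\|\ge\|A_{\tau^D}(u)\|\ge\beta\ge 2^{-\ell}$ for every $u$, which is the claimed conclusion.

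The remaining case is $2^{k-1}<m\le 2^k$, so $\beta\in[2^{-k},2^{-(k-1)})$, $D<6$, and intra-class connection probabilities are at least $\tau^5$. We aim for $\ell=k-1$: fixing $u$ in a class $X$ and a class $Y$ adjacent to $X$ in $G$ (which exists by connectivity, as $m\ge 2$), it suffices to prove $\p_p(u\leftrightarrow v)\ge\tau^{54}2^{-k-7}$ for all $v\in Y$, since then $A_{\tau^{54}2^{-k-7}}(u)\supseteq X\cup Y$ has density $2\beta\ge 2^{-(k-1)}$, and the same works for every $u$. Note first that necessarily $p<\tau$: otherwise a $G$-edge between two distinct classes (which exists since $G$ is connected) would carry $\p_p\ge p\ge\tau$ and hence identify its endpoints' classes, a contradiction. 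To connect $u$ to $v$ we apply the section's high-degree insertion-tolerance estimate to the set $S$ of $X$--$Y$ edges: conditionally on the configuration off $S$, with probability at least $1-(1-p)^{|S|}$ some edge of $S$ is open, and Harris's inequality lets us glue such an open crossing edge to a within-$X$ connection from $u$ and a within-$Y$ connection to $v$; the resulting bound, after accounting for these probabilities, sits below $\tau^{54}$ in the exponent and loses at most the factor $2^{-\ell-8}=2^{-k-7}$.

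I expect this last merging step to be the main obstacle, for two reasons. (a) The endpoints of the open crossing edge are random, so the within-$X$ and within-$Y$ connections must be arranged to be (conditionally) independent of $\omega|_S$ while remaining uniformly bounded below; this is delicate because the bound $\p_p(u\leftrightarrow v)\ge\tau^D$ on intra-class connectivity is a priori about connection in all of $G$ rather than in $G$ with the $X$--$Y$ edges removed. (b) One must control $1-(1-p)^{|S|}$ from below: when $|S|$ is small the surviving factor is only of order $p|S|$, so one checks that in that regime $|V|$ cannot be much larger than $2^k$ (the balls $A_\tau(u)$ being small), making even the crude estimate $1-(1-p)^{|S|}\ge p\ge 1/2|V|$ enough, whereas when degrees are large the edge-boundary of a class is forced to be large and the factor is bounded below by an absolute constant. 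The exponent $3^3$ and the slack $2^{-\ell-8}$ are precisely the room needed to absorb the diameter estimate and these insertion-tolerance losses, and verifying that this bookkeeping closes in every case is the other point requiring care.
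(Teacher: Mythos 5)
Your overall framework matches the paper's: you consider the graph $\Gamma_\tau$ (the paper's $G^*$), note it is vertex-transitive with degree at least $2^{-k}\abs{V}$, apply the dense-diameter bound (the paper's \cref{lem:dense-diam}) to each component, and plan to merge two components via the high-degree insertion-tolerance estimate (\cref{prop:insertion-tolerance}). Your Case~1 ($m\le 2^{k-1}$), where you set $\ell=\lceil\log_2 m\rceil$ and note that the intra-component chaining bound $\tau^D$ with $D<6\cdot 2^{k-\ell}$ already beats the target, is a clean observation: the paper only short-circuits like this when $\Gamma_\tau$ is connected, otherwise always performing the merge, so your split saves work.

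But Case~2 is where the lemma is actually proved, and there you have correctly diagnosed the two obstacles without resolving either, so the proof is genuinely incomplete. On your point~(a): your plan to condition on $\omega|_{E\setminus S}$ and then glue via Harris does not close, precisely because the chaining bound $\p_p(u\leftrightarrow e^-)\ge\tau_1$ is a bound in $G$, not in $G$ with the crossing edges removed, and conditioning on the $S$-configuration destroys it. The paper avoids conditioning on $\omega|_{E\setminus S}$ altogether. Instead it sets $L:=\{e\in I:\ u\leftrightarrow e^-\text{ and }v\leftrightarrow e^+\}$ (a random subset of the crossing set $I$ defined using the \emph{full} configuration), lower-bounds $\e_p\abs{L}\ge\tau_1^2\abs{I}$ by Harris, uses reverse Markov to get $\abs{L}\ge\tfrac12\tau_1^2\abs{I}$ with probability at least $\tfrac12\tau_1^2$, and then splits: either the event $A:=\{\abs{L}\ge\tfrac12\tau_1^2\abs{I}\text{ and all edges of }L\text{ closed}\}$ has small probability, in which case some edge of $L$ is open with good probability and $u\leftrightarrow v$ directly; or $\p_p(A)\ge\tfrac14\tau_1^2$ and \cref{prop:insertion-tolerance} (applied with $F=I$, $F[\omega]=L$, $\eta=\tfrac12\tau_1^2$) opens exactly one edge of $L$, which by definition of $L$ yields $u\leftrightarrow v$. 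No conditioning on the crossing edges is needed because the definition of $L$ already encodes the ``both ends connected'' event, and the hypothesis $F[\omega]\subseteq F\setminus\omega$ of \cref{prop:insertion-tolerance} is exactly what the event $A$ supplies. On your point~(b): your heuristic that $\abs{V}$ is bounded when $\abs{S}$ is small is not justified (one crossing edge is compatible with arbitrarily large $\abs{V}$). The paper instead uses that the $\Gamma_\tau$-component partition is $\Aut(G)$-invariant; since $G$ is connected, vertex-transitivity forces \emph{every} vertex of a component $C$ to have an incident edge leaving $C$, so $\abs{\partial_E C}\ge\abs{C}$, and pigeonholing over the $\abs{V}/\abs{C}$ other components produces a component $C'$ with at least $\abs{C}^2/\abs{V}$ crossing edges. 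Combined with $p\ge 1/2\abs{V}$ this gives $p\abs{I}\ge\|C\|^2/2\ge 2^{-2k-1}$, a uniform lower bound, which is exactly the input \cref{prop:insertion-tolerance} needs. These two ingredients — the $\Aut$-invariance/pigeonhole count and the $L$-set construction — are the missing ideas; without them the merging step cannot be made rigorous by Harris and $1-(1-p)^{\abs S}$ alone.
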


Before proving this lemma we first state and prove some general facts that will be used in the proof. The first is a standard bound on the diameter of dense graphs whose proof we include for completeness.

\begin{lem}[Dense graphs have bounded diameter]\label{lem:dense-diam}
	Let $G=(V,E)$ be a finite, simple, connected graph. If every vertex of $G$ has degree at least $a \abs{V}$ for some $a>0$ then $\operatorname{diam} G \leq (3-a)/a$.
\end{lem}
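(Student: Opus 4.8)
The plan is to prove this by a ``ball-covering'' argument. Fix a vertex $v \in V$ and consider the sequence of metric balls $B_0 = \{v\} \subseteq B_1 \subseteq B_2 \subseteq \cdots$, where $B_r$ denotes the set of vertices at graph distance at most $r$ from $v$. Since $G$ is connected, this sequence eventually exhausts $V$, and $\operatorname{diam} G$ is controlled by how quickly it does so. The key observation is that as long as $B_r \neq V$, the boundary vertices of $B_r$ (i.e.\ those at distance exactly $r$) each have many neighbours, forcing $B_{r+1}$ to be substantially larger than $B_r$ unless $B_r$ is already nearly all of $V$.

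First I would make the growth estimate precise. Let $S_r = B_r \setminus B_{r-1}$ be the sphere of radius $r$ (with $S_0 = \{v\}$). If $B_r \neq V$, pick any vertex $w \in S_r$ if $r \geq 1$, or $w = v$ if $r = 0$; since $\deg(w) \geq a|V|$ and all neighbours of $w$ lie in $B_{r+1}$, we get $|B_{r+1}| \geq \deg(w) + 1 \geq a|V|$ — wait, more carefully: the point is to compare $|S_{r+1}|$ with the earlier spheres. The neighbours of $w \in S_r$ lie in $S_{r-1} \cup S_r \cup S_{r+1}$, so $|S_{r-1}| + |S_r| + |S_{r+1}| \geq \deg(w) + 1 > a|V|$. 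Summing this inequality, or rather using it directly: for each $r$ with $B_r \neq V$ we have $|B_{r+1}| - |B_{r-2}| \geq |S_{r+1}| + |S_r| + |S_{r-1}| > a|V|$ (interpreting $B_j = \emptyset$ for $j < 0$). Hence the quantity $|B_r|$ increases by more than $a|V|$ every three steps, so after at most $3/a$ steps we would exceed $|V|$ unless we have already reached all of $V$; a short bookkeeping of the first couple of steps (using $|B_1| \geq a|V| + 1$) gives the stated bound $\operatorname{diam} G \leq (3-a)/a$.

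The only mild subtlety — and the step I would be most careful about — is the bookkeeping at small radii and the exact constant. One clean way: if $\operatorname{diam} G = D$, then for any $v$ the balls $B_0, B_1, \ldots, B_D$ are strictly increasing, and for each $0 \le r \le D-1$ we have $|B_{r+1} \setminus B_{\max(r-2,-1)+1}|$... rather, group the radii $1, 2, \ldots, D$ into consecutive blocks and apply the three-sphere bound in each block to see that $D \le 3/a$ roughly, then sharpen. Concretely: $|V| \ge |B_D| \ge 1 + \sum_{r=1}^{D} |S_r|$, and partitioning $\{1,\dots,D\}$ and using that every three consecutive spheres have total size $> a|V| - |S_{r-1}|$ is a little awkward, so instead I would argue: for $1 \le r \le D-1$, any $w \in S_r$ has all $\deg(w) \ge a|V|$ neighbours in $S_{r-1} \cup S_r \cup S_{r+1}$, giving $|S_{r-1}| + |S_r| + |S_{r+1}| > a|V|$; also $|S_0| + |S_1| = |B_1| > a|V|$. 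Adding the inequalities for $r \in \{1, 4, 7, \dots\}$ up to $D-1$ shows that roughly $D/3$ disjoint triples each contribute more than $a|V|$, forcing $D \lesssim 3/a$. Tightening the endpoints and combining with $|S_0| + |S_1| > a|V|$ to absorb one triple yields $D \le (3-a)/a = 3/a - 1$ exactly. Since there are no deep ideas here, the main ``obstacle'' is purely getting the constant to come out as $(3-a)/a$ rather than something slightly weaker; this is a matter of choosing the grouping of spheres optimally and handling the two boundary spheres by hand.
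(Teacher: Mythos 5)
Your approach is correct, and it is a different packaging of essentially the same pigeonhole/counting idea as the paper. The paper's proof takes a shortest path $u=u_0,\dots,u_k=v$ with $k=3m+r$, $r\in\{0,1,2\}$, looks at the $m+1$ vertices $u_0,u_3,\dots,u_{3m}$, and argues: if $m+1>1/a$ then $\sum_i \deg u_{3i} > |V|$, so by pigeonhole two of them share a neighbour, producing a shorter path, a contradiction. You instead take spheres $S_0,\dots,S_D$ around one endpoint of a diametrical pair and observe that three consecutive spheres containing a vertex in the middle one have total size $>a|V|$. These are really the same count in two forms: in the paper the disjointness is of the neighbourhoods $N(u_{3i})$, while in yours it is the disjointness of the sphere-triples, each of which contains the closed neighbourhood of its representative. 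Your framing has the mild advantage of not needing to exhibit a shorter path explicitly (the contradiction is absorbed into the fact that the spheres partition $V$), at the cost of slightly fussier bookkeeping.

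On the constant: the grouping you should use, and essentially the one you end up gesturing at, is to take the first block to be just $\{S_0,S_1\}$ (using that $v$'s $\geq a|V|$ neighbours all lie in $S_1$, so $|S_0|+|S_1|\geq a|V|+1$) and then blocks $\{S_{3j-1},S_{3j},S_{3j+1}\}$ for $j=1,2,\dots,\lfloor D/3\rfloor$ (each $\geq a|V|+1$ via a representative in $S_{3j}$). These $\lfloor D/3\rfloor+1$ blocks are disjoint and contained in $\{S_0,\dots,S_D\}$, so $|V|\geq(\lfloor D/3\rfloor+1)(a|V|+1)$, hence $\lfloor D/3\rfloor+1<1/a$. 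Writing $D=3m+r$ with $r\in\{0,1,2\}$ this is $m+1<1/a$, so $m\leq 1/a-1$ and $D\leq 3m+2\leq 3(1/a-1)+2=(3-a)/a$. If instead you use uniform triples $\{S_0,S_1,S_2\},\{S_3,S_4,S_5\},\dots$ (your $r\in\{1,4,7,\dots\}$ grouping without the absorption), you only obtain $D<3/a$, which is slightly weaker; the absorption you mention is genuinely needed to recover $(3-a)/a$. So the plan is sound, but the constant hinges on anchoring the first block at $S_0$ rather than treating all blocks symmetrically.
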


\begin{proof}[Proof of \cref{lem:dense-diam}]
Let $a > 0$, and let $G$ be a finite, connected graph with minimum vertex degree at least $a \abs{V}$. Let $u$ and $v$ be two vertices of $G$ and let $u = u_0 , u_1, \dots, u_k = v$ be a minimal length path from $u$ to $v$. Writing $ k = 3m + r$ where $m$ is a positive integer and $r \in \{ 0 , 1 , 2 \}$, it suffices to prove that $m+1\leq 1/a$. Suppose for contradiction that this is not the case. Then
\begin{equation}
	\sum_{i=0}^m \deg u_{3i} \geq (m+1) \cdot \min_{0 \leq i \leq m} \deg u_{3i} > \abs{V},
\end{equation}
so, by the pigeonhole principle, we can find $i,j \in \{ 0,\dots , m \}$ with $i < j$ such that $u_{3i}$ and $u_{3j}$ have a common neighbour $w$. It follows that $u_0, u_1 ,\dots, u_{3i}, w , u_{3j} , \dots u_{k-1} , u_k$ is a shorter path from $u$ to $v$ than $u_0, \dots , u_k$, a contradiction. 
\end{proof}

The second ingredient we will require is a quantitative form of insertion-tolerance. In bounded degree contexts, insertion-tolerance usually refers to the fact that the conditional probability of an edge being included in the configuration given the status of every other edge is bounded away from zero. Such a statement need not be valid in regimes of interest in the high-degree case, where $p$ may be very small. Intuitively, the following proposition instead gives conditions in which we can insert exactly one edge into some sufficiently large \emph{set} of edges with good probability. This proposition will be used again in the proof of \cref{lem:molecules:attaching_clusters}.

\begin{prop}[Quantitative insertion tolerance] \label{prop:insertion-tolerance}
Let $G=(V,E)$ be a finite graph, let $p \in (0,1)$, and let $F \subseteq E$ be a collection of edges. 
Let $A \subseteq \{0,1\}^E$ be an event, let $\eta>0$ and suppose that for each configuration $\omega \in A$ there is a distinguished subset $F[\omega] $ with $F[\omega] \subseteq F \setminus \omega$ and $\abs{F[\omega]} \geq \eta \abs{F}$. If we define
	$A^+ := \{ \omega \cup \{e\} : \omega \in A \text{ and } e \in F[\omega] \}$
then
\[\p_p( A^+) \geq \frac{\eta^2}{1-p} \cdot \frac{p|F|}{p|F|+1} \cdot \p_p( A)^2.\]
\end{prop}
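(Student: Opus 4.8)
\textbf{Proof proposal for \cref{prop:insertion-tolerance}.}

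The plan is to reveal the configuration on $E \setminus F$ first, then analyze the conditional law on $F$, which is simply a product of independent $\mathrm{Bernoulli}(p)$ coin flips. Fix the restriction $\omega' \in \{0,1\}^{E \setminus F}$, and let $\mathcal{E}_{\omega'}$ denote the (deterministic, given $\omega'$) event-fiber structure: the set of configurations $\eta_F \in \{0,1\}^F$ such that the full configuration $\omega = \omega' \cup \eta_F$ lies in $A$. For each such good fiber element we have the distinguished set $F[\omega] \subseteq F \setminus \omega$ of size at least $\eta|F|$. The key observation is that $A^+$ restricted to this fiber contains every configuration of the form $(\omega' \cup \eta_F) \cup \{e\}$ with $\eta_F \in \mathcal{E}_{\omega'}$ and $e \in F[\omega' \cup \eta_F]$; in particular, on the fiber over $\omega'$, the conditional probability of $A$ is some number $a(\omega')$, and I want to lower bound the conditional probability of $A^+$ by a constant multiple of $a(\omega')^2$ (after which I square-average using Cauchy--Schwarz / Jensen, since $\E[a(\omega')^2] \geq \E[a(\omega')]^2 = \p_p(A)^2$).

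So the heart of the matter is the following one-parameter statement: if $\mu$ is the product $\mathrm{Bernoulli}(p)$ measure on $\{0,1\}^F$, $B \subseteq \{0,1\}^F$ is an event, and to each $\eta_F \in B$ we assign a set $F[\eta_F] \subseteq F \setminus \eta_F$ with $|F[\eta_F]| \geq \eta|F|$, then $B^+ := \{\eta_F \cup \{e\} : \eta_F \in B,\ e \in F[\eta_F]\}$ satisfies $\mu(B^+) \geq \frac{\eta^2}{1-p}\cdot\frac{p|F|}{p|F|+1}\cdot\mu(B)^2$. To prove this I would estimate, for a fixed target $\xi \in B^+$, the number of preimages: $\xi$ arises from $\eta_F = \xi \setminus \{e\}$ for those $e \in \xi$ with $\xi \setminus \{e\} \in B$ and $e \in F[\xi\setminus\{e\}]$. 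The cleanest route is a weighted double-counting / bipartite argument on the "insertion graph" whose edges join $\eta_F \in B$ to $\eta_F \cup \{e\}$ for $e \in F[\eta_F]$: the $B$-side has $\mu$-mass $\mu(B)$ and each vertex has out-degree $\geq \eta|F|$ with each inserted edge carrying a measure ratio $\mu(\eta_F\cup\{e\})/\mu(\eta_F) = p/(1-p)$; the $B^+$-side vertex $\xi$ has in-degree at most the number of open edges of $\xi$ lying in $F$, which has $\mu$-expectation controlled by $p|F|$. Summing the measure of edges from the $B$ side gives total weight $\geq \frac{p}{1-p}\,\eta|F|\,\mu(B)$; bounding the per-target accumulation on the $B^+$ side and applying Cauchy--Schwarz to convert the resulting $\sum_\xi \mu(\xi)\cdot(\text{in-weight})$ into a bound involving $\mu(B^+)^{1/2}$ and $\big(\sum \mu(\xi)(\text{in-degree})^2\big)^{1/2}$ — where the second factor is estimated using that, conditionally, the number of open $F$-edges is a sum of independent indicators with mean $p|F|$ and hence second moment $\leq (p|F|)^2 + p|F| = p|F|(p|F|+1)$ — yields exactly the claimed constant $\frac{\eta^2}{1-p}\cdot\frac{p|F|}{p|F|+1}$.

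I expect the main obstacle to be bookkeeping the Cauchy--Schwarz step cleanly: one must arrange the double sum $\sum_{\eta_F \in B}\sum_{e \in F[\eta_F]} \mu(\eta_F\cup\{e\})$ so that it equals $\sum_{\xi \in B^+} \mu(\xi)\cdot N(\xi)$ where $N(\xi) = \#\{e \in \xi : \xi\setminus\{e\}\in B,\ e\in F[\xi\setminus\{e\}]\} \le \#\{e\in\xi\cap F\}$, then apply $\big(\sum_\xi \mu(\xi) N(\xi)\big)^2 \le \mu(B^+)\cdot \sum_\xi \mu(\xi) N(\xi)^2 \le \mu(B^+)\cdot \sum_\xi \mu(\xi)(\#\{e\in\xi\cap F\})^2$ and evaluate the last expectation as $p|F|(p|F|+1) \le$ (the needed quantity) — being careful that this moment computation is over the full product measure, not just over $B^+$, which only strengthens the inequality. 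The remaining issue is the reduction from the conditioned (fiber) statement to the global one: since $a(\omega') = \mu(B_{\omega'})$ and $\p_p(A^+) = \E_{\omega'}[\mu(B_{\omega'}^+)] \ge \frac{\eta^2}{1-p}\frac{p|F|}{p|F|+1}\E_{\omega'}[\mu(B_{\omega'})^2] \ge \frac{\eta^2}{1-p}\frac{p|F|}{p|F|+1}\big(\E_{\omega'}[\mu(B_{\omega'})]\big)^2 = \frac{\eta^2}{1-p}\frac{p|F|}{p|F|+1}\p_p(A)^2$, this last step is just Jensen and is routine. Everything else is elementary product-measure manipulation, so no deep input beyond Cauchy--Schwarz is needed.
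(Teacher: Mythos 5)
Your argument is correct, and it takes a genuinely different route from the paper's. Both proofs begin the same way, revealing $\omega|_{E\setminus F}$ and reducing to the fiber $\{0,1\}^F$ equipped with the product Bernoulli$(p)$ measure $\mu$, and both finish with a Jensen-type step to convert the per-fiber second moment $\E_{\omega'}[\mu(B_{\omega'})^2]$ into $\p_p(A)^2$. The key fiber estimate is where the methods diverge. The paper represents $\omega|_F$ via a uniformly random permutation $\pi$ of $F$ together with an independent $N\sim\mathrm{Bin}(|F|,p)$, setting $\omega|_F=\pi(\{1,\dots,N\})$; inserting an edge corresponds to advancing $N$ from $n-1$ to $n$, with the binomial ratio $\tilde\p(N=n)/\tilde\p(N=n-1)=p(|F|-n+1)/((1-p)n)$ controlling the measure change, and then Jensen is applied to the convex map $x\mapsto(|F|-x)/(x+1)$ together with the a priori bounds $\tilde\e[N\mid A]\le(1-\eta)|F|$ and $\tilde\e[N\mid A]\le p|F|/\tilde\p(A)$. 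You instead run a direct weighted double count over the bipartite insertion relation $\{(\eta_F,e):\eta_F\in B,\ e\in F[\eta_F]\}$: its total $\mu$-weight $S=\sum_{\eta_F\in B}\sum_{e\in F[\eta_F]}\mu(\eta_F\cup\{e\})$ satisfies $S\geq\frac{p}{1-p}\eta|F|\mu(B)$ from the $B$ side (using $\mu(\eta_F\cup\{e\})=\frac{p}{1-p}\mu(\eta_F)$, valid because $e\notin\eta_F$), while from the $B^+$ side $S=\sum_{\xi\in B^+}\mu(\xi)N(\xi)$ with $N(\xi)\le|\xi\cap F|$, and Cauchy--Schwarz plus the second moment $\E_\mu[|\xi\cap F|^2]=p|F|(1-p+p|F|)\le p|F|(p|F|+1)$ close the argument. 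Carrying the constants through, your method actually yields the marginally stronger bound
\[
\p_p(A^+)\ \geq\ \frac{\eta^2}{(1-p)^2}\cdot\frac{p|F|}{p|F|+1}\cdot\p_p(A)^2,
\]
which improves on the stated result by a factor $1/(1-p)\geq 1$; this is immaterial for the paper's applications (where $p$ is small), but it is a clean gain. The paper's permutation coupling is perhaps better adapted to proving the sharper $\p_p(A)/(-\log\p_p(A))$-type improvement the authors mention in passing, since one can be more careful about the distribution of $N$ given $A$ rather than crudely squaring; your Cauchy--Schwarz step bakes in the $\p_p(A)^2$ loss at the fiber level and would need to be replaced by a different moment inequality to recover that refinement. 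Both approaches are elementary, and yours avoids the permutation encoding entirely, arguably at the cost of being slightly less amenable to the sharpening just described.
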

Note that the hypotheses of this proposition force there to be at most $(1-\eta)|F|$ open edges in $F[\omega]$ whenever the event $A$ holds.
The lower bound appearing here has not been optimized, and a more careful implementation of our argument would give a $\P_p( A)/ (-\log \P_p( A))$ term in place of the $\P_p( A)^2$ term above.
The only important conclusion of this proposition for our purposes will be that if $p|F|$, $\P_p(A)$, and $\eta$ are all bounded below by some constant $c>0$ then there exists $\delta=\delta(c)>0$ such that $\P_p^G(A^+) \geq \delta$.

\begin{proof}[Proof of \cref{prop:insertion-tolerance}]
We will abbreviate  $\p=\p_p^G$ and $\e=\e_p^G$.  Given a set of edges $H$, we write $\omega \vert_H$ for the configuration of open and closed edges in $H$, which by a standard abuse of notation we will think of both as a subset of $H$ and a function $H\to\{0,1\}$. We can sample a configuration $\omega$ with law $\p$ using the following procedure:
		\begin{enumerate}
		\item Sample the restriction $\omega|_{E\setminus F}$ of $\omega$ to $E\setminus F$.
		\item Sample a uniformly random permutation $\pi$ of the edges of $F$ independently of $\omega|_{E\setminus F}$.
		\item Sample a binomial random variable $N \sim \text{Binomial}( \abs{F} , p )$ independently of $\omega|_{E\setminus F}$ and $\pi$.
		\item Set $\omega\vert_F := \pi ( \{1,\dots N\} )$.
	\end{enumerate}
Let $\tilde \p$ denote the joint measure of $\omega$, $\pi$, and $N$ sampled as in this procedure.

\medskip

Let $A$ and $F[\omega]$ be as in the statement of the proposition. 
The assumption that $|F[\omega]| \geq \eta |F|>0$ and $F[\omega]\subseteq F \setminus \omega$ for every $\omega \in A$ guarantees that $N\leq(1-\eta)|F|<|F|$ on the event that $\omega \in A$. 
By construction we have that $\omega = \omega\vert_{E\backslash F} \cup \pi( \{1,\dots, N \} )$ so that for each $n \in \{1,\dots, \abs{F} \}$ we can rewrite
\begin{equation}
\begin{split}
	\{ \omega \in A^+\} \cap \{N=n \}&= 
	\{\omega\vert_{E \backslash F} \cup \pi(\{1, \dots, N\}) \in A^+ \} \cap \{N= n \} \\
	&= \{\omega\vert_{E \backslash F} \cup \pi(\{1, \dots, n\}) \in A^+ \} \cap \{N= n \}.
\end{split}
\end{equation}
Note that the two events on the second line are independent. One way for the union $\omega\vert_{E \backslash F} \cup \pi(\{1, \dots, n\})$ to belong to $A^+$ is for $\omega\vert_{E \backslash F} \cup \pi(\{1, \dots, n-1\})$ to belong to $A$ and for $\pi(n)$ to belong to the set $F\left[\omega\vert_{E \backslash F} \cup \pi(\{1, \dots, n-1\})\right]$. Since $\abs{F[\nu]} \geq \eta \abs{F}$ for every configuration $\nu \in A$, $\pi(n)$ belongs to this set with probability at least $\eta$ conditional on $\omega\vert_{E \backslash F}$ and $\pi(\{1, \dots, n-1\})$ and we deduce that
\begin{equation} \label{eq:insertion-tolerance-1}
\begin{split}
	\tilde \p \bra{ \omega \in A^+ \text{ and } N = n } &\geq  \eta  \tilde \p \bra{ \omega\vert_{E \backslash F} \cup \pi(\{1, \dots, n-1\}) \in A}  \tilde \p \bra{ N = n } \\
	&= \eta  \tilde \p \bra{ \omega \in A \text{ and } N = n-1  } \cdot \frac{ \tilde \p \bra{ N=n }}{ \tilde \p \bra{ N=n-1 }}.
\end{split}
\end{equation}
The ratio of probabilities appearing here is given by
\begin{equation}
\frac{ \tilde \p \bra{ N=n }}{ \tilde \p \bra{ N=n-1 }} = \frac{\binom{|F|}{n} p^n (1-p)^{|F|-n}}{\binom{|F|}{n-1} p^{n-1} (1-p)^{|F|-n+1}} = \frac{p(|F|-n+1)}{(1-p)n}
\end{equation}
and we deduce that
\begin{align}
\label{eq:insertion_key}
\tilde \p(\omega \in A^+) &\geq \eta \sum_{n=1}^{|F|} \frac{p(|F|-n+1)}{(1-p)n} \tilde \p \bra{ \omega \in A \text{ and } N = n-1  }
\nonumber\\&
= \frac{p\eta}{1-p} \tilde \e\left[ \frac{|F|-N}{N+1} \;\Big\vert\; \omega \in A\right] \tilde \p(\omega \in A),
\end{align}
where we used that $N \leq (1-\eta)|F|<|F|$ whenever $\omega\in A$ in the second line. Since $(|F|-x)/(x+1)$ is convex we may apply Jensen's inequality to deduce that
\begin{align}
\label{eq:insertion_key}
\tilde \p(\omega \in A^+) &\geq \frac{p\eta}{1-p}  \frac{|F|-\tilde \e [ N \mid \omega \in A]}{\tilde \e [ N \mid \omega \in A]+1} \tilde \p(\omega \in A).
\end{align}
Using again that $N \leq (1-\eta)|F|$ whenever $\omega \in A$ and using the bound $\tilde \e [N \mid \omega \in A] \leq \tilde \e [N]/\tilde \p(\omega \in A) = p|F|/\tilde \p(\omega \in A)$ we deduce that
\begin{align}
\label{eq:insertion_key}
\tilde \p(\omega \in A^+) &\geq \frac{\eta^2}{1-p} \cdot \frac{p|F|}{p|F|+1} \cdot \tilde \P(\omega \in A)^2,
\end{align}
concluding the proof. \qedhere

\end{proof}

We are now ready to prove \cref{lem:coarsening}.

\begin{proof} [Proof of \cref{lem:coarsening}]

Let $G^*$ be the graph with vertex set $V$ in which two vertices $u$ and $v$ are connected by an edge if and only if $\p_p(u\leftrightarrow v) \geq \tau$. The graph $G^*$ is simple, vertex-transitive, and is dense in the sense that its vertex degrees are all at least $2^{-k} |V|$. Let $C$ be a connected component of $G^*$, noting that $2^{-k} \leq \|C\| \leq 1$. Applying \cref{lem:dense-diam} to the subgraph of $G^*$ induced by $C$ implies that $\operatorname{diam}(C) \leq (3-2^{-k}/\|C\|)/(2^{-k}/\|C\|)=(3\|C\|-2^{-k})/2^{-k} \leq 3\cdot 2^k\|C\|$. 
Thus, if $u$ and $v$ belong to the same connected component of $G^*$ then there exists a sequence of vertices $u=u_0,u_1,\ldots,u_n=v$ with $n\leq 3\cdot 2^k \|C\|$ such that $u_i$ is adjacent to $u_{i-1}$ in $G^*$ for every $1\leq i \leq n$. It follows by the Harris-FKG inequality that
\begin{equation}
\P_p(u \leftrightarrow v) \geq \prod_{i=1}^n \P_p(u_{i-1} \leftrightarrow u_i) \geq \tau^{3\cdot 2^k\|C\|}=:\tau_1
\label{eq:tau1FKG}
\end{equation}
for every $u,v$ in the same connected component of $G^*$. If $G^*$ is connected then $\|C\|=1$ and the claim follows with $\ell=0$, so we may assume that $G^*$ is disconnected and that $2^{-\ell-1}\leq\|C\|<2^{-\ell}$ for some $\ell \in \{1,\ldots,k-1\}$.

\medskip

Since $G$ is connected there must exist at least one edge of $G$ connecting $C$ to $V \backslash C$. Since the connected-component equivalence relation on $G^*$ is invariant under the automorphisms of $G$, it follows by vertex-transitivity of $G$ that \emph{every} vertex in $C$ belongs to an edge from $C$ to $V \backslash C$. Letting $\partial_E C$ be the set of edges of $G$ with one endpoint in $C$ and the other in $V\setminus C$, it follows that $|\partial_E C| \geq |C|$. Since there are $|V|/|C|$ connected components of $G^*$, it follows by the pigeonhole principle that there exists a connected component $C'\neq C$ of $G^*$ such that at least $|C|^2/|V|$ edges of $|\partial_E C|$ have their other endpoint in $C'$. Let $I$ be the set of \emph{oriented} edges $e$ of $G$ with tail $e^-\in  C$ and head $e^+ \in C'$, so that $|I|\geq |C|^2/|V|$.

\medskip

Fix vertices $u\in C$ and $v \in C'$ that are the endpoints of some edge in $I$. We claim that
\begin{equation}
\p_p(u\leftrightarrow v) \geq \frac{\tau_1^8}{2^8}\|C\|^2.
\label{eq:connectingCandC'}
\end{equation}
Let $L$ be the random set of oriented edges $e\in I$ such that $u \leftrightarrow e^-$ and $v \leftrightarrow e^+$. For each $e \in I$, the Harris-FKG inequality and \cref{eq:tau1FKG} imply that $\p_p \bra{ u \leftrightarrow e^- \text{ and } v \leftrightarrow e^+} \geq \tau_1^2$ and hence that $\e_p |L| \geq \tau_1^2 |I|$. Applying Markov's inequality to $|I\setminus L|$ we obtain that
\begin{equation} \begin{split}
\p_p\left(|L| \geq \frac{\tau_1^2}{2} |I|\right) &= 1- \p_p\left(|I\setminus L| > \frac{1}{2}(2-\tau_1^2) |I|\right) \\&\geq 1- \frac{2-2\tau_1^2}{2-\tau_1^2}=\frac{\tau_1^2}{2-\tau_1^2} \geq \frac{1}{2}\tau_1^2.
\end{split}\end{equation}
Let $A$ be the event that $|L| \geq \frac{\tau_1^2}{2} |I|$ and that every edge of $L$ is closed, so that $\{u\leftrightarrow v\} \supseteq \{|L| \geq \tau_1^2|I|/2\} \setminus A$. If $\P_p(A)\leq \tau_1^2/4$ then 
\begin{equation}
\p_p(u \leftrightarrow v) \geq \p_p\left(|L| \geq \frac{\tau_1^2}{2} |I|\right) - \p_p(A) \geq \frac{1}{4}\tau_1^2,
\end{equation}
which is stronger than the claimed inequality,  so we may assume that $\P_p(A)\geq \tau_1^2/4$. In this case, applying \cref{prop:insertion-tolerance} with $F=I$, $F[\omega]=L$, and $\eta=\tau_1^2/2$ yields that
\begin{equation}
\p_p(u \leftrightarrow v) \geq \p_p(A^+) \geq \frac{\tau_1^8}{64} \cdot \frac{p|I|}{p|I|+1} \geq \frac{\tau_1^8}{64} \cdot \frac{|I|}{|I|+2|V|} \geq \frac{\tau_1^8}{64} \cdot \frac{|C|^2}{|C|^2+2|V|^2},
\end{equation}
where we used the assumption $p\geq \frac{1}{2|V|}$ in the second inequality and the inequality $|I|\geq |C|^2/|V|$ in the third. Bounding $|C|^2+2|V|^2$ by $4|V|^2$
completes the proof of \eqref{eq:connectingCandC'}.
It follows from this inequality, \eqref{eq:tau1FKG}, and a further application of Harris-FKG that 
\begin{equation}
\p_p(u \leftrightarrow w) \geq \p_p(u \leftrightarrow v) \p_p(v \leftrightarrow w) \geq \frac{\tau_1^9}{2^8} \|C\|^2
\end{equation}
for every $w \in C'$. The same inequality also holds for every $w\in C$ by \eqref{eq:tau1FKG}. Thus, recalling that $1 \leq \ell < k$ is such that $2^{-\ell-1}\leq \|C\| < 2^{-\ell}$ and using that $\tau_1=\tau^{3\cdot 2^k \|C\|}$, we deduce that
\begin{equation}
\Bigl\|\Bigl\{w \in V : \p_p(u \leftrightarrow w) \geq \tau^{3^3 2^{k-\ell}} 2^{-2\ell-10} \Bigr\}\Bigr\| \geq \|C\cup C'\| \geq 2^{-\ell},
\end{equation}
completing the proof. (The reason why we have worked so hard to get an extra factor of 2 via $\|C\cup C'\| \geq 2 \|C\|$ in the final inequality above will become clear shortly.) \qedhere
\end{proof}

\cref{lem:coarsening} implies the following general inequality by induction, from which we will deduce \cref{thm:2point_main} as a special case.

\begin{lemma} \label{lem:twopoint_general}
Let $G=(V,E)$ be a finite, connected, vertex-transitive graph and let $p\geq \frac{1}{2|V|}$. If $\tau>0$ and $k\geq 0$ are such that  $\|\{v\in V: \p_p(u\leftrightarrow v) \geq \tau\}\| \geq 2^{-k}$ for every $u\in V$ then 
\[
\p_p(u \leftrightarrow v) \geq \tau^{(54)^k} 2^{- k(54)^k}
\]
for every $u,v\in V$.
\end{lemma}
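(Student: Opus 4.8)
The plan is to prove \cref{lem:twopoint_general} by strong induction on $k$, using \cref{lem:coarsening} as the engine of the inductive step. First I would reduce to the case $0<\tau\leq 1$: if $\tau>1$ then no vertex $v$ (not even $v=u$) has $\p_p(u\leftrightarrow v)\geq\tau$, so the hypothesis $\|\{v:\p_p(u\leftrightarrow v)\geq\tau\}\|\geq 2^{-k}>0$ cannot hold and there is nothing to prove. The base case $k=0$ is then immediate, since the hypothesis reads $\p_p(u\leftrightarrow v)\geq\tau=\tau^{(54)^0}2^{-10\cdot 0\cdot(54)^0}$ for every $v\in V$.

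For the inductive step I would fix $k\geq 1$, assume the statement for all nonnegative integers below $k$, and apply \cref{lem:coarsening} (whose hypotheses — finite connected vertex-transitive $G$, $p\geq 1/2|V|$, $\tau>0$, $k\geq 1$, and the density bound — are exactly those in force). This produces an index $0\leq\ell<k$ with
\[
\bigl\|\bigl\{v\in V:\p_p(u\leftrightarrow v)\geq\tau'\bigr\}\bigr\|\geq 2^{-\ell}\quad\text{for every }u\in V,\qquad\text{where }\tau':=\tau^{27\cdot 2^{k-\ell}}2^{-\ell-8}
\]
(here $3^3=27$). Since $0<\tau'\leq 2^{-8}\leq 1$ and $\ell<k$, I would feed this into the inductive hypothesis with $(\tau,k)$ replaced by $(\tau',\ell)$, obtaining for all $u,v\in V$
\[
\p_p(u\leftrightarrow v)\geq(\tau')^{(54)^\ell}2^{-10\ell(54)^\ell}=\tau^{\,27\cdot 2^{k-\ell}(54)^\ell}\,2^{-(11\ell+8)(54)^\ell}.
\]

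The remaining task is to check this is at least $\tau^{(54)^k}2^{-10k(54)^k}$. Since $0<\tau\leq 1$ makes $x\mapsto\tau^x$ nonincreasing, it suffices to verify $27\cdot 2^{k-\ell}(54)^\ell\leq(54)^k$ and $(11\ell+8)(54)^\ell\leq 10k(54)^k$. Setting $m:=k-\ell\geq 1$, the first divides down to $27\cdot 2^m\leq 54^m=27^m2^m$, i.e.\ $27\leq 27^m$, valid for all $m\geq 1$ (with equality at $m=1$, which is why $54$ is the sharp base for this induction scheme). For the second, $\ell\leq k-1$ gives $(54)^\ell\leq(54)^k/54$ and $11\ell+8\leq 11k-3$, so the left side is at most $(11k-3)(54)^k/54\leq 10k(54)^k$. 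The only real subtlety is this constant bookkeeping, together with the two structural points that make the induction well-founded: that \cref{lem:coarsening} always returns $\ell$ \emph{strictly} below $k$, and that the rescaled parameter $\tau'$ stays in $(0,1]$ so that the monotonicity of $x\mapsto\tau^x$ may be invoked. Once \cref{lem:twopoint_general} is established, \cref{thm:2point_main} will drop out by applying it with $\tau$ and $k$ read off from the $\eps$-supercriticality assumption using the bounds that precede this lemma.
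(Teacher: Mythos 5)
Your proof is correct and follows essentially the same route as the paper: both arguments iterate \cref{lem:coarsening} to drive the density exponent $k$ down to $0$ while tracking how the connection-probability threshold degrades, and both rely on the identity $54=3^3\cdot 2$ for the final arithmetic. The paper presents this as a single unrolled recursion with an explicit formula for the final threshold $\tau_m$, whereas you phrase it as a strong induction on $k$; this is merely a difference of bookkeeping, not of substance.
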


\begin{proof}[Proof of \cref{lem:twopoint_general}]
Fix $G=(V,E)$ and $p \geq \frac{1}{2|V|}$.
Applying \cref{lem:coarsening} recursively implies that there exists a decreasing sequence of non-negative integers $k=k_0 > k_1 > \cdots > k_m = 0$ with $m\leq k$ such that if we define the sequence of positive real numbers $\tau_0,\ldots,\tau_m$ recursively by $\tau_0=\tau$ and
\begin{equation}
\tau_{i+1} = \tau_i^{3^3 2^{k_i-k_{i+1}}} 2^{-2k_{i+1}-10}
\end{equation}
for each $0\leq i \leq m-1$ then 
\begin{equation}
\|\{v\in V: \p_p(u \leftrightarrow v) \geq \tau_{i} \}\| \geq 2^{-k_i}
\end{equation}
for each $1 \leq i \leq m$. It follows by induction on $i$ that
\begin{equation}
\tau_i = \tau^{3^{3i} 2^{k-k_i}} \prod_{j=1}^i \left(2^{-2k_j-10}\right)^{3^{3(i-j)} 2^{k_j-k_i}}
\end{equation}
for every $0\leq i \leq m$ and hence that
\begin{align}
\tau_m = \tau^{3^{3m}2^k} \prod_{j=1}^m \left(2^{-2k_j-10}\right)^{3^{3(m-j)} 2^{k_j}} &\geq \tau^{3^{3m}2^k} \prod_{j=1}^m \left(2^{-2k-10}\right)^{3^{3(m-j)} 2^{k}} \nonumber\\ &= \tau^{3^{3m}2^k} 2^{-(2k+10)3^{3m} 2^k \sum_{j=1}^m 3^{-3j}} \\&\geq \tau^{3^{3m}2^k} 2^{- 3^{3m} k2^k},
\end{align}
where we used the inequality $(2k+10) \sum_{j=1}^m 3^{-3j} \leq 12k \cdot (1/26) \leq k$ for $k\geq 1$ to simplify the final expression. The claim follows since $m \leq k$ and $3^3\cdot 2=54$.
\end{proof}

To deduce \cref{thm:2point_main} from \cref{lem:twopoint_general} we will need the following elementary but useful lower bound on the critical probability. 

\begin{lemma}
\label{lem:pc_lower}
If $G$ is a finite graph with maximum degree $d$ and $\eps>0$ is such that $|V| \geq 2\eps^{-3}$ then $p_c(\eps,\eps) \geq 1/2d$. In particular, if $p$ is $\eps$-supercritical then $p \geq 1/2d$.
\end{lemma}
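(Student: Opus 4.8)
\emph{Proof strategy.} The plan is to show that $\p_p(\|K_1\| \geq \eps) < \eps$ for every $p < 1/2d$ whenever $|V| \geq 2\eps^{-3}$; since $p \mapsto \p_p(\|K_1\| \geq \eps)$ is non-decreasing (by the standard monotone coupling), this forces $p_c(\eps,\eps) = \inf\{p : \p_p(\|K_1\|\geq\eps) \geq \eps\} \geq 1/2d$. The ``in particular'' then follows at once: if $p$ is $\eps$-supercritical then $(1-\eps)p \geq p_c(\eps,\eps) \geq 1/2d$ by definition, and since $1-\eps \leq 1$ we get $p \geq 1/2d$.

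\emph{Step 1: a first-moment bound on single-vertex clusters.} For any finite graph $G$ with maximum degree $d$ and any vertex $v$, every vertex of the cluster $K_v$ is the endpoint of at least one open self-avoiding walk started at $v$, so $|K_v|$ is bounded by the number of open self-avoiding walks started at $v$. Since there are at most $d(d-1)^{n-1} \leq d^{\,n}$ self-avoiding walks of length $n \geq 1$ from $v$, each of which is open with probability $p^n$, we obtain $\e_p |K_v| \leq \sum_{n\geq 0} (dp)^n = 1/(1-dp)$, which is strictly less than $2$ when $p < 1/2d$. Note that this step uses neither vertex-transitivity nor simplicity.

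\emph{Step 2: passing to the largest cluster and applying Markov.} Writing $\sum_{v\in V}|K_v| = \sum_{u,v\in V}\indic{u\leftrightarrow v} = \sum_{C}|C|^2 \geq |K_1|^2$, where the middle sum is over clusters $C$, and taking expectations gives $\e_p|K_1|^2 \leq \sum_{v\in V}\e_p|K_v| < 2|V|$ for $p < 1/2d$. Applying Markov's inequality to the nonnegative random variable $|K_1|^2$ then yields
\[
\p_p(\|K_1\|\geq\eps) = \p_p\!\left(|K_1|^2 \geq \eps^2 |V|^2\right) < \frac{2|V|}{\eps^2 |V|^2} = \frac{2}{\eps^2 |V|} \leq \frac{2}{\eps^2 \cdot 2\eps^{-3}} = \eps,
\]
using the hypothesis $|V| \geq 2\eps^{-3}$ in the final inequality. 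This establishes the displayed bound for all $p < 1/2d$ and completes the proof.

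\emph{Main obstacle.} There is no substantial difficulty here; the only points requiring a little care are making the inequalities strict (so that the infimum defining $p_c(\eps,\eps)$ is genuinely bounded below by $1/2d$, rather than merely $\geq 1/2d$ after a continuity argument) and noting that the $\ell^2$ inequality $|K_1|^2 \leq \sum_{v}|K_v|$ is valid for arbitrary finite graphs, so that transitivity plays no role.
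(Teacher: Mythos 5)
Your proof is correct and is essentially the paper's argument, reorganized. The paper combines the same two ingredients — the geometric-series bound $\e_p|K_v| < 2$ for $p < 1/2d$, and the inequality $\sum_{v}\e_p|K_v| \geq \e_p|K_1|^2 \geq \eps^3|V|^2$ valid when $\p_p(\|K_1\|\geq\eps)\geq\eps$ — to derive $|V| < 2\eps^{-3}$ by contradiction, whereas you run the same chain of inequalities forward through Markov's inequality; the two are the contrapositive of each other and use identical estimates.
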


\begin{proof}
Fix a vertex $v\in V$. For each $r$, the expected number of open simple paths of length $r$ starting at $v$ is at most $d(d-1)^{r-1} \leq d^r$ and it follows that if $p< 1/2d$ then $\E |K_v| \leq \sum_{r=0}^\infty p^r d^r <2$. On the other hand, if $p\geq p_c(\eps,\eps)$ then 
\[
\sum_{v\in V} \e_p |K_v| \geq \e_p |K_1|^2 \geq \eps^3 |V|^2,
\]
so if the inequalities $p< 1/2d$ and $p\geq p_c(\eps,\eps)$ both hold then $|V| < 2\eps^{-3}$.
\end{proof}

We are now ready to conclude the proof of \cref{thm:2point_main}.

\begin{proof}[Proof of \cref{thm:2point_main}]
Since $p\geq p_c(\eps,\eps)$ we have that $\p_p(\|K_1\| \geq \eps)\geq \eps$ and hence that $\p_p(\|K_u\|\geq \eps) \geq \eps \p_p(\|K_1\| \geq \eps)\geq \eps^2$ for every $u\in V$ by vertex-transitivity.
It follows in particular that 
 $\sum_{v\in V} \p_p(u \leftrightarrow v) = \e_p |K_u| \geq \eps^3|V|$
and hence by Markov's inequality that 
\begin{equation}
\Bigl\|\Bigl\{v\in V: \p_p(u \leftrightarrow v) \geq \frac{\eps^3}{2}\Bigr\}\Bigr\| \geq \frac{\eps^3}{2}.
\end{equation}
 Moreover, since $|V|\geq 2\eps^{-3}$ and $G$ is simple it follows from \cref{lem:pc_lower} that $p\geq 1/2d \geq 1/2|V|$. 
Thus, applying \cref{lem:twopoint_general} with $\tau=\eps^3/2$ and $k=\lceil\log_2 (2/\eps^3)\rceil$ we deduce by elementary calculations that
\begin{align}
\p_p(u\leftrightarrow v) &\geq \exp\left[-(54)^{\lceil\log_2 (2/\eps^3)\rceil} \log\frac{2}{\eps^3}- \lceil\log_2 (2/\eps^3)\rceil (54)^{\lceil\log_2 (2/\eps^3)\rceil} \right]
\nonumber\\
& \begin{multlined}[t] \geq
\exp\bigg[-54 \cdot (54)^{\log_2 (2/\eps^3)} \log\frac{2}{\eps^3}-54\cdot (54)^{\log_2 (2/\eps^3)}\log_2 \frac{2}{\eps^3} \\ -  54\cdot (54)^{\log_2 (2/\eps^3)} \bigg] \end{multlined}
\nonumber\\
&\geq \exp\left[-162  \cdot (54)^{\log_2 (2/\eps^3)} \log_2\frac{2}{\eps^3} \right]
\end{align}
for every $u,v\in V$, where we used the inequality $\log (2/\eps^3) \leq \log_2 (2/\eps^3)$ in the final inequality. We have by calculus that $x 54^x \leq 64^x / (e \log(64/54))$ for every $x\geq 0$, and using that $(64 \cdot 162) / (e \log(64/54)) = 22449.65\ldots\leq 10^5$ we deduce that
\begin{equation}
\p_p(u\leftrightarrow v) \geq \exp\left[-10^5 \cdot \eps^{-18}\right]
\end{equation}
for every $u,v\in V$ as claimed.
\end{proof}

\section{Uniqueness under the sharp density property} \label{section:deducing_uniqueness}

In this section we prove supercritical uniqueness under the assumption that our infinite set $\mathcal H \subseteq \mathcal F$ satisfies the \emph{sharp density property}, which we now introduce. This section is at the heart of the paper and contains the most significant new techniques.

\begin{defn}
	Let $G = (V,E)$ be a finite graph and let $\Delta : (0,1) \to (0,1/2]$ be decreasing. Recall from \cref{section:the_two-point_connection_property} that for each $0<\alpha,\delta<1$ we define $p_c(\alpha,\delta) =p_c^G(\alpha,\delta):= \inf \{ p \in [0,1] : \p_p (\den{K_1} \geq \alpha ) \geq \delta \}$. We say $G$ has the \emph{$\Delta$-sharp density property} if 
	\[
		\frac{ p_c ( \alpha, 1-\delta ) }{ p_c (\alpha, \delta) } \leq e^\delta \qquad \text{ for every $0<\alpha <1$ and $\Delta(\alpha)\leq \delta \leq 1/2$}.
	\]
	Let $\mathcal H \subseteq \mathcal F$ be an infinite set. Given a $\mathcal H$-indexed family $(\Delta_G)_{G \in \mathcal H}$ of decreasing (i.e.\! non-increasing) functions $\Delta_G: (0,1) \to (0,1/2]$ such that $\Delta_G \to 0$ pointwise as $G \to \infty$ in $\mathcal H$, we say that $\mathcal H$ has the \emph{$(\Delta_G)_{G \in \mathcal H}$-sharp density property} if $G$ has the $\Delta_G$-sharp density property for every $G \in \mathcal H$. We say that $\mathcal H$ has the \emph{sharp density property} if there is some $\mathcal H$-indexed family of decreasing functions $(\Delta_G)_{G \in \mathcal H}$ with $\Delta_G: (0,1) \to (0,1/2]$ and $\Delta_G\to 0$ pointwise as $G \to \infty$ in $\mathcal H$ such that $\mathcal H$ has the $(\Delta_G)_{G \in \mathcal H}$-sharp density property.  Equivalently, $\mathcal H$ has the sharp density property if and only if 
	\[
	\lim_{G \in \mathcal H} \sup_{\beta \in [\alpha,1]}	\frac{ p_c^{G}(\beta,1-\delta) }{ p_c^{G}(\beta,\delta)} = 1 
	\]
	for every $0<\alpha<1$ and $0<\delta \leq 1/2$.
\end{defn}

Graphs with subalgebraic vertex degrees can straightforwardly be shown to satisfy the sharp density property using standard sharp threshold theorems \cite{MR1371123,MR1194785,MR1303654}, all of which are proven via Fourier analysis on the hypercube. Indeed, applying these theorems in our setting leads to the following proposition, which will be used in the proof of \cref{thm:quantitative-sup} and whose proof is deferred to \cref{subsec:subpolynomial_sharpness}.

\begin{prop}
\label{prop:sharp_giant_subpolynomial}
There exists a universal constant $C$ such that the following holds.
Let $G=(V,E)$ be a finite, simple, connected vertex-transitive graph with vertex degree $d$. Then $G$ has the $\Delta$-sharp density property with 
\[
\Delta(\alpha)=\begin{cases}
 \frac{1}{2} \wedge C\sqrt{\frac{\log d}{\log |V|}} & \text{if }\alpha \geq (2/|V|)^{1/3}\\
\frac{1}{2} & \text{otherwise}.
\end{cases}
\]
In particular, if $(G_n)\seq=((V_n,E_n))\seq$ is a sequence of finite, vertex-transitive graphs with $|V_n|\to\infty$ and with subalgebraic degrees $d_n=|V_n|^{o(1)}$ then $(G_n)\seq$ has the sharp density property.
\end{prop}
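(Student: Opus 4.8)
The plan is to apply standard sharp threshold theorems to the increasing, $\Aut G$-invariant event $A_\alpha:=\{\abs{K_1}\ge\alpha\abs{V}\}\subseteq\{0,1\}^E$, recalling that $p_c^G(\alpha,\delta)$ is by continuity exactly the parameter at which $\p_p(A_\alpha)=\delta$. We may assume $\Delta(\alpha)<1/2$ — equivalently $\alpha\ge(2/\abs{V})^{1/3}$ and $C\sqrt{\log d/\log\abs{V}}<1/2$ — since otherwise $\Delta(\alpha)=1/2$, only $\delta=1/2$ is in play, and $p_c^G(\alpha,\tfrac12)/p_c^G(\alpha,\tfrac12)=1\le e^{1/2}$; for $C$ a large enough absolute constant this assumption also guarantees that $\abs{V}$ exceeds an absolute constant and that $d\le\sqrt{\abs{V}}$. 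Although $G$ need not be edge-transitive, vertex-transitivity forces every $\Aut G$-orbit of edges to have size at least $\abs{V}/2$: if an orbit $O$ contains $\{u,v\}$ and $w\in V$ is any vertex, then an automorphism taking $u$ to $w$ sends $\{u,v\}$ to an edge of $O$ incident to $w$, and each edge has two endpoints. Since influences are constant on orbits, Russo's formula gives $\max_{e\in E}I_e^{(p)}(A_\alpha)\le\frac{2}{\abs{V}}\sum_eI_e^{(p)}(A_\alpha)=\frac{2}{\abs{V}}\frac{d}{dp}\p_p(A_\alpha)$, and feeding this into the sharp threshold inequality of Talagrand \cite{MR1303654} (one could also use \cite{MR1371123,MR1194785}), while using $d\le\sqrt{\abs{V}}$ to dispense with the regime in which $\frac{d}{dp}\p_p(A_\alpha)$ is very large, one obtains a universal $c>0$ with
\[
\frac{d}{dp}\log\frac{\p_p(A_\alpha)}{1-\p_p(A_\alpha)}\ \ge\ \frac{c\log\abs{V}}{p\log(1/p)}\qquad\text{for all }p\in(0,\tfrac12],
\]
and the symmetric bound on $[\tfrac12,1)$ obtained via $p\mapsto 1-p$.

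Next I would pin down a lower bound on $p_c^G(\alpha,\delta)$. A second-moment estimate exactly as in the proof of \cref{lem:pc_lower} shows that for $p<1/2d$ one has $\e_p\abs{K_1}^2\le\abs{V}\,\e_p\abs{K_v}<2\abs{V}$, so that $\p_p(A_\alpha)\le 2/(\alpha^2\abs{V})\le 2^{1/3}\abs{V}^{-1/3}$; a routine estimate shows this is $<\Delta(\alpha)\le\delta$ under our standing assumption, whence $p_c^G(\alpha,\delta)\ge 1/2d$, and in particular $\log(1/p_c^G(\alpha,\delta))\le\log(2d)\le 2\log d$ (the case $d=1$ being trivial throughout).

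The conclusion is then an integration followed by elementary calculus. Writing $p_1=p_c^G(\alpha,\delta)$ and $p_2=p_c^G(\alpha,1-\delta)$, so that $\p_{p_1}(A_\alpha)=\delta$ and $\p_{p_2}(A_\alpha)=1-\delta$, I would integrate the differential inequality over $[p_1,p_2]$ (splitting at $\tfrac12$ to use the $p\mapsto1-p$ symmetry) and change variables to $\log\log(1/p)$ to obtain, for a universal $C_0$,
\[
\log\frac{p_2}{p_1}\ \le\ \frac{C_0\,\log(1/\delta)\,\log(1/p_1)}{\log\abs{V}}\ \le\ \frac{2C_0\,\log(1/\delta)\,\log d}{\log\abs{V}}.
\]
It remains to check that the right side is at most $\delta$ whenever $\delta\ge C\sqrt{\log d/\log\abs{V}}$; since the two sides are monotone in $\delta$ in opposite directions, the worst case is $\delta=C\sqrt{\log d/\log\abs{V}}$, where the inequality reduces (with $x=\log d/\log\abs{V}$) to $C_0\sqrt{x}\log\!\big(1/(C^2x)\big)\le C$ for all $x\in(0,1]$, which holds once $C$ is a large enough absolute constant because $\sqrt{x}\log(1/x)$ is bounded on $(0,1]$. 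This gives the $\Delta$-sharp density property with the stated $\Delta$. For the final assertion, each $\Delta_n$ is non-increasing in $\alpha$ (it is a two-level step function), and if $d_n=\abs{V_n}^{o(1)}$ then for every fixed $\alpha\in(0,1)$ we have $\alpha\ge(2/\abs{V_n})^{1/3}$ for all large $n$ while $\tfrac12\wedge C\sqrt{\log d_n/\log\abs{V_n}}\to 0$, so $(G_n)\seq$ has the sharp density property.

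The step I expect to be the main obstacle is extracting the \emph{multiplicative} conclusion $p_2/p_1\le e^\delta$ rather than an additive one. The naive approach — the additive Friedgut--Kalai bound $p_2-p_1\lesssim\log(1/\delta)/\log\abs{V}$ together with $p_1\ge 1/2d$ and $\log(1+t)\le t$ — loses a factor of $d$ instead of $\log d$ and is useless once $d$ is, say, a large power of $\log\abs{V}$. Circumventing this is precisely why one must use the sharp threshold theorem in the $p$-weighted (logarithmic-scale) form above, a statement genuinely about the behaviour at small $p$, for which a Talagrand-type inequality with its $1/(p\log(1/p))$ weight is the natural tool; one then has to ensure that the crude bound $p_1\ge1/2d$ enters \emph{only} through $\log(1/p_1)\le\log(2d)$, which is what produces the $\log d$ and hence, after optimizing over $\delta$, the square root in the definition of $\Delta$. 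Handling the (harmless) regimes where the threshold is already polynomially steep, or where $p$ is close to $1$, is a further routine annoyance.
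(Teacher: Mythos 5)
Your approach is the same as the paper's: apply the Talagrand sharp--threshold inequality (the paper packages it as \cref{thm:KKL_subpolynomial}), integrate the resulting $p$-weighted differential inequality over $[p_c(\alpha,\delta),p_c(\alpha,1-\delta)]$, anchor the lower endpoint via the second-moment bound $p_c\ge 1/2d$ from \cref{lem:pc_lower} so that only $\log(1/p_c)\le\log 2d$ enters, and then optimise over $\delta$. The only real difference is cosmetic: you change variables to $\log\log(1/p)$ while the paper substitutes $p=e^x/(e^x+1)$, which linearises $p(1-p)$ and handles $p<1/2$ and $p>1/2$ symmetrically and uniformly. The paper also phrases the conclusion contrapositively (eq.~(3.8) there) rather than as a direct bound on $\log(p_2/p_1)$.

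One step in your write-up is wrong as stated and deserves care. The claimed intermediate inequality $\log(p_2/p_1)\le C_0\log(1/\delta)\log(1/p_1)/\log|V|$ fails when $p_1=p_c(\alpha,\delta)>1/2$: for the cycle $\Z/n\Z$, for instance, $p_1\to 1$ so $\log(1/p_1)=O(n^{-2/3})$ is far smaller than $\log(p_2/p_1)$, which is of the same order. The issue is that after ``splitting at $1/2$,'' the $p\mapsto 1-p$ symmetric change of variables controls $\log\log(1/(1-p_2))-\log\log(1/(1-p_1))$, i.e.\ the ratio of $1-p_1$ to $1-p_2$, not the ratio $p_2/p_1$ directly, and converting between these loses the $\log(1/p_1)$ factor. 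The final bound $\log(p_2/p_1)\le 2C_0\log(1/\delta)\log d/\log|V|$ is nevertheless correct in all cases (essentially because for $p_1>1/2$ one has the cruder bound $\log(p_2/p_1)\lesssim \eta$ with $\eta=2\log(1/\delta)/(c\log|V|)$, and $\log d\ge\log 2$ absorbs the lost factor), so your proof can be repaired, but the intermediate claim should be restricted to $p_1\le 1/2$ and the regime $p_1>1/2$ argued separately. The paper's $\phi(x)=e^x/(e^x+1)$ substitution avoids this entirely via the clean case split $x_0\le 0$ versus $x_0>0$. Also, your remark about using $d\le\sqrt{|V|}$ to ``dispense with the regime in which $\frac{d}{dp}\p_p(A_\alpha)$ is very large'' is unnecessary: \cref{thm:KKL_subpolynomial} already holds unconditionally, the large-derivative regime being handled internally by the Lambert-$W$ step in its proof.
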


A more general proposition stating that an infinite subset of $\mathcal F$ has the sharp density property if and only if it does not have a molecular subsequence is proven in \cref{section:the_sharp-giant_property}. For now we will focus on the \emph{consequences} of the sharp density property, leaving the verification of this property to later sections.

\medskip

We now state the main quantitative result of this section. When applying this theorem we will think of $\eps>0$ as a fixed constant, the number of vertices $|V|$ as being large, and $\Delta(\eps)$ as being small. We have not attempted to optimise the universal constants appearing in this theorem.

\begin{thm} \label{thm:main_single_graph}
	Let $G = (V,E)$ be a finite, simple, connected, vertex-transitive graph, let $\eps \in (0,1]$, and let $\tau(\eps)>0$ be as in \cref{thm:2point_main}. If $G$ has the $\Delta$-sharp density property for some $\Delta:(0,1)\to(0,1/2]$ then
	\[
 \p_p\left(\|K_2\|\geq \lambda \left(\frac{200\Delta(\eps)}{\eps^3 \tau(\eps)}+\frac{25}{\eps^2 \tau(\eps)|V|}\right)\right) \leq \frac{\eps}{\lambda}
	\]
for every $\eps$-supercritical parameter $p$ and every $\lambda \geq 1$.
\end{thm}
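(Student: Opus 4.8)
The plan is to follow exactly the two-part strategy outlined in the paper's overview of Section 3. Fix an $\eps$-supercritical parameter $p$. First I would use the $\Delta$-sharp density property to extract a smaller parameter $q\le p$ at which the density of the giant is concentrated. Concretely, since $p$ is $\eps$-supercritical we have $(1-\eps)p \ge p_c(\eps,\eps)$, so $\p_p(\|K_1\|\ge \eps)\ge\eps$, and the family of functions $q \mapsto \p_q(\|K_1\|\ge\alpha)$ is monotone in $q$; by a pigeonhole / intermediate-value argument over dyadic-type scales of $\alpha$, combined with the sharp-threshold bound $p_c(\alpha,1-\delta)/p_c(\alpha,\delta)\le e^\delta$ with $\delta=\Delta(\eps)$, one finds some threshold level $\alpha_\star$ and a parameter $q$ with $(1-\eps)p \le q \le p$ (roughly) such that under $\p_q$ the event $\{\|K_1\|\ge\alpha_\star\}$ has probability close to $1$ while $\{\|K_1\|\ge\alpha_\star+O(\Delta(\eps))\}$ has probability close to $0$ — i.e. $\|K_1\|$ lies in a window of width $O(\Delta(\eps))$ with high probability under $\p_q$. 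Since $q$ is still $\eps'$-supercritical for a comparable $\eps'$, \cref{thm:2point_main} gives $\p_q(x\leftrightarrow y)\ge\tau$, and a routine second-moment / Markov argument then shows that the concentrated giant is in fact \emph{unique} under $\p_q$ with high probability: any second cluster of density $\ge c$ would, by Harris-FKG and the two-point lower bound, force the two clusters to merge with probability $\ge c^2\tau^2$ times something, contradicting that $\|K_1\|$ does not exceed $\alpha_\star + O(\Delta(\eps))$.

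The second and harder part is to show that non-uniqueness under $\p_p$ contradicts this concentration under $\p_q$. Work in the standard monotone coupling where $\omega_q\subseteq\omega_p$, with $\omega_p$ obtained from $\omega_q$ by opening each closed edge independently with probability $(p-q)/(1-q)$, equivalently by keeping each $\omega_p$-edge with probability $q/p$. Suppose for contradiction that $\p_p(\|K_2\|\ge t)$ is not small, where $t$ is the quantity in the theorem divided by $\lambda$. On this event there are two clusters of density $\ge t$ in $\omega_p$; I would argue that with good conditional probability at least one of them is a \emph{sandcastle}, i.e. a subgraph that under $q/p$-retention breaks into clusters all of density $o(t)$ — this follows because the \emph{total} expected density surviving as large clusters under $\p_q$ is bounded (else $\|K_1\|$ under $\p_q$ would be too big), so at least one of the two giants must largely disintegrate. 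On the disintegration event, condition on the sandcastle-cluster $S$ (a subset of $V$ with $\|S\|\ge t$) and on $\omega_p$ restricted to the edges touching $S$; the edges of $\omega_q$ inside $V\setminus S$ are then (conditionally) distributed as independent percolation, and the $\p_q$-giant is confined to $V\setminus S$. This produces a subset $W=V\setminus S$ with $\|W\|\le 1-t$ that, with good probability under $\p_q$, contains a cluster of size close to $\alpha_\star|V|$.

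The final step converts the existence of such a set $W$ into a contradiction with concentration. Because $W$ has density bounded away from $1$ and contains a near-$\alpha_\star|V|$ cluster with good probability, I would translate $W$ around by automorphisms to get several roughly disjoint copies $W_1,\dots,W_r$ (transitivity lets us do this, losing only constants), use Harris-FKG to say that with probability at least (good probability)$^r$ each copy simultaneously contains a large cluster, and then use the two-point lower bound $\tau$ from \cref{thm:2point_main} to glue two such clusters in overlapping or adjacent copies into a single cluster of density strictly exceeding $\alpha_\star + c\,\Delta(\eps)$ — contradicting the concentration of $\|K_1\|$ under $\p_q$ established in part one, provided the constants ($200$, $25$, the powers of $\eps$ and $\tau$) are chosen generously enough. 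Tracking the numerology so that the bound comes out as $\lambda(200\Delta(\eps)/(\eps^3\tau)+25/(\eps^2\tau|V|))$ with failure probability $\le\eps/\lambda$ is then a matter of bookkeeping, the $1/|V|$ term absorbing the discreteness/finite-size losses and the $\eps/\lambda$ coming from a final Markov step on $\|K_2\|$.

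\textbf{Main obstacle.} I expect the crux to be making the sandcastle argument quantitatively honest: showing that non-uniqueness at $p$ \emph{genuinely} forces a macroscopic piece of the configuration to disintegrate when passing to $q$, in a way that the surviving $\p_q$-giant is trapped in a set of density bounded away from $1$ with a quantitatively good probability — and then that the trapped giant is still large enough (close to $\alpha_\star|V|$, not merely $t|V|$) to contradict concentration after the Harris-FKG-plus-two-point gluing. Controlling the conditional independence structure carefully in the monotone coupling, and ensuring the "one of the two giants must be a sandcastle" dichotomy does not leak probability, is where the real work lies.
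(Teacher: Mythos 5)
Your first three parts closely track the paper: finding a lower parameter $q$ at which $\|K_1\|$ is concentrated (\cref{lem:inmain:concentration}), deducing uniqueness of the $\p_q$-giant from the two-point bound and Harris (\cref{lem:inmain:concentration_implies_uniqueness}), and showing via the monotone coupling that non-uniqueness at $p$ forces a sandcastle cluster $S$ to exist with good probability and the $\p_q$-giant to be confined to $V \setminus \overline{S}$ on a good-probability event (\cref{lem:inmain:sandcastles:existence} and \eqref{eq:sandcastle_contradiction_1}). Up to that point the proposal is essentially the paper's proof.

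The final step — translating $W = V \setminus S$ around by automorphisms, intersecting "roughly disjoint" copies, and gluing the large clusters they contain — is where the proposal breaks down, and it is genuinely different from what the paper does. Two problems. First, the premise that "$W$ has density bounded away from $1$" is not available: the sandcastle $S$ only has density $\geq \beta$ where $\beta$ can be as small as $\beta_0 = 200\Delta(\eps)/(\eps^3\tau) + 25/(\eps^2\tau|V|)$, so $\|W\| \leq 1 - \beta$ is typically arbitrarily close to $1$ precisely in the regime $\Delta(\eps) \to 0$, $|V| \to \infty$ where the theorem's conclusion is supposed to be strong. In that regime one cannot find even two roughly disjoint translates of $W$. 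Second, even if disjoint translates $W_1,\dots,W_r$ did exist, the "large cluster of density $\approx \alpha_\star$" that each $W_i$ contains with good probability is plausibly the \emph{same} $\p_q$-giant seen through different windows, so gluing gives no excess mass over $\alpha_\star$.

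The paper's \cref{lem:localization} does something quite different and avoids both issues. It never translates $W$. Instead it lets $X$ be the set of vertices belonging to $\omega_q$-clusters of density $\geq \alpha-\delta$, observes (i) that on the good-probability event $\|X\setminus V(S)\| \geq \|K_1(\omega\setminus\overline S)\| \geq \alpha-\delta$, and (ii) that by vertex-transitivity every vertex of $V(S)$ independently of its location has probability $\geq (\alpha-\delta)\P_q(\|K_1\|\geq\alpha-\delta)$ of lying in a large cluster, so by Markov's inequality on $V(S)\setminus X$ the intersection $\|X\cap V(S)\|$ is $\geq \tfrac{(\alpha-\delta)^2}{2}\|S\|$ with probability bounded away from $0$. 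Both events are increasing, so Harris gives that $\|X\| \geq (\alpha-\delta) + \tfrac{(\alpha-\delta)^2}{2}\|S\|$ with probability bounded below, which, since $\|X\|$ typically equals $\|K_1\|$ and is concentrated around $\alpha$ within a window of width $\approx\delta$, forces $\tfrac{(\alpha-\delta)^2}{2}\|S\| \lesssim \delta$ — but $\|S\| \geq \beta_0 \gg \delta/\eps^2$ makes this impossible. The crucial structural point you are missing is that the extra mass comes from the deterministic set $V(S)$ intersecting large clusters (a transitivity fact), not from overlaying translates of $W$.
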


\begin{cor} \label{cor:main_sequence}
  Let $\mathcal H \subseteq \mathcal F$ be an infinite set. If $\mathcal H$ has the sharp density property, then $\mathcal H$ has the supercritical uniqueness property.
\end{cor}

In this proof, we think of an event as holding \emph{with high probability} if the probability of its complement is controlled by $\abs{V}^{-1}$ and $\Delta(x)$ for some constant $x$. Similarly, we think of a real-valued random variable as being \emph{concentrated} if the random variable lies in an interval of width controlled by $\abs{V}^{-1}$ and $\Delta(x)$ for some constant $x$ with high probability. Fix a parameter $p$ that is $\eps$-supercritical with respect to $G$. Our plan is as follows. First, in \cref{subsec:lower_parameter}, we show that it is possible to find a parameter $q$ with $p_c(\eps,\eps)\leq q \leq p$ such that  $\den{K_1}$ is concentrated in a small window under $\p_q$. Then, in \cref{subsec:concentration_implies_uniqueness}, we deduce that $\|K_2\|$ is small with high probability under $\p_q$. Finally, in \cref{subsec:sandcastles} we introduce the notion of \emph{sandcastles} and use this notion to prove that non-uniqueness of the giant cluster at the fixed parameter $p$ would contradict the established properties of percolation at the well-chosen lower parameter $q$.

\subsection{Concentration at a lower parameter}
\label{subsec:lower_parameter}

Our first step is to use the sharp density property to find another parameter $q$ with $p_c(\eps,\eps)\leq q \leq p$ such that the largest cluster under $\p_q$ is a giant whose density is concentrated in a small interval.

\begin{lem} \label{lem:inmain:concentration}
	Let $G=(V,E)$ be a finite graph with the $\Delta$-sharp density property for some $\Delta$. Then for every $\eps \in (0,1]$ and every $\eps$-supercritical parameter $p$, there exists a parameter $q \in (p_c(\eps,\eps),p)$ and a density $\alpha \geq \eps$ such that
	\eeq{eq:inmain:concentration}{
		\p_q \bra{ \abs{ \den{K_1} - \alpha } \geq \frac{4 \Delta(\eps)}{\eps} + \frac{1}{\abs{V}}} \leq 2 \Delta(\eps).
	}
\end{lem}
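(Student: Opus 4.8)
\emph{The plan} is to produce $q$ by descending from $p$ along a geometric sequence of parameters and stopping at the first place where the typical size of the giant is locally almost constant. Write $\delta=\Delta(\eps)$ throughout. We may assume $\delta<\eps/4$, since otherwise $4\delta/\eps\ge 1$ and the claimed inequality holds vacuously with $\alpha=\eps$ and any $q\in(p_c(\eps,\eps),p)$ --- and such a $q$ exists because $p_c(\eps,\eps)\le (1-\eps)p<p$, the last inequality being strict since $p>0$ for every $\eps$-supercritical parameter.

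\emph{A quantile reformulation.} For a parameter $r$ and a confidence level $\rho\in(0,1)$, let $\alpha_\rho(r)$ be the largest value $\beta\in\{j/|V|:1\le j\le|V|\}$ with $\p_r(\|K_1\|\ge\beta)\ge\rho$. Since $r\mapsto\p_r(\|K_1\|\ge\beta)$ is a non-constant polynomial for $\beta>1/|V|$, hence strictly increasing wherever it lies in $(0,1)$, we have $\p_r(\|K_1\|\ge\beta)\ge\rho$ if and only if $r\ge p_c(\beta,\rho)$, so that $\alpha_\rho(r)=\max\{\beta: p_c(\beta,\rho)\le r\}$; in particular $\alpha_\rho$ is non-decreasing in $r$, non-increasing in $\rho$, and $\p_r(\|K_1\|\ge\alpha_\rho(r)+1/|V|)<\rho$ by maximality. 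The key point is that the $\Delta$-sharp density property --- which, since $\Delta(\beta)\le\Delta(\eps)=\delta$ for every $\beta\ge\eps$, provides $p_c(\beta,1-\delta)\le e^{\delta}p_c(\beta,\delta)$ for all $\eps\le\beta<1$ --- translates into
\[
\alpha_\delta\!\left(re^{-\delta}\right)\ \le\ \alpha_{1-\delta}(r)\ \le\ \alpha_\delta(r)\qquad\text{whenever }\alpha_\delta\!\left(re^{-\delta}\right)\ge\eps .
\]
Indeed, writing $\beta=\alpha_\delta(re^{-\delta})$, if $\eps\le\beta<1$ then $p_c(\beta,1-\delta)\le e^{\delta}p_c(\beta,\delta)\le e^{\delta}\cdot re^{-\delta}=r$, so $\beta\le\alpha_{1-\delta}(r)$, while the right inequality is just monotonicity in $\rho$. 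In words: lowering the parameter multiplicatively by the factor $e^{-\delta}$ more than pays for raising the confidence from $\delta$ to $1-\delta$ at essentially the same density. This multiplicative structure is exactly why the descent is geometric rather than additive.

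\emph{The telescoping step.} Take $r_i:=p\,e^{-i\delta}$. Because $p$ is $\eps$-supercritical, $p(1-\eps)\ge p_c(\eps,\eps)\ge p_c(\eps,\delta)$, so $\p_{r_i}(\|K_1\|\ge\eps)\ge\delta$ and hence $\alpha_\delta(r_i)\ge\eps$ for all $i$ with $r_i\ge p(1-\eps)$, i.e.\ for all $i\le N+1$ where $N:=\big\lfloor\frac1\delta\log\frac1{1-\eps}\big\rfloor-1$; a short computation using $\delta<\eps/4$ and $\log\frac1{1-\eps}\ge\eps$ gives $N\ge\eps/(2\delta)\ge2$. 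The map $i\mapsto\alpha_\delta(r_i)$ is non-increasing with values in $[\eps,1]$ on this range, so $\sum_{i=1}^{N}\big(\alpha_\delta(r_i)-\alpha_\delta(r_{i+1})\big)\le 1$, and the pigeonhole principle yields an index $i^\ast\in\{1,\dots,N\}$ with $\alpha_\delta(r_{i^\ast})-\alpha_\delta(r_{i^\ast+1})\le 1/N\le 2\delta/\eps$.

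\emph{Conclusion, and the real difficulty.} Put $q:=r_{i^\ast}$ and $\alpha:=\alpha_\delta(q)$; then $q\in(p_c(\eps,\eps),p)$ and $\alpha\ge\eps$. Maximality of $\alpha$ gives the upper bound $\p_q(\|K_1\|\ge\alpha+1/|V|)<\delta$, and the displayed inequality at $r=q$ combined with the pigeonhole estimate gives $\alpha_{1-\delta}(q)\ge\alpha_\delta(r_{i^\ast+1})\ge\alpha-2\delta/\eps$, hence $\p_q(\|K_1\|<\alpha-2\delta/\eps)\le\p_q(\|K_1\|<\alpha_{1-\delta}(q))\le\delta$. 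A union bound then gives $\p_q\big(\,|\|K_1\|-\alpha|\ge 2\delta/\eps+1/|V|\,\big)\le 2\delta$, which is stronger than \eqref{eq:inmain:concentration}. The content of the proof is the quantile reformulation of the sharp density property; after that it is bookkeeping, with $\eps$-supercriticality contributing only the $\Omega(\eps/\delta)$ rungs in the ladder that the pigeonhole step consumes. The one genuinely delicate point is the behaviour near density $1$: there ``$\|K_1\|\ge\beta$'' degenerates into the connectivity event and $\alpha_\delta(q)$ may equal $1$, so one must instead invoke the sharp density property at density $1-\frac1{2|V|}$ (which encodes the same event) and, in the degenerate regime where $\eps$ is close to $1$ and $|V|$ is only just large enough to admit an $\eps$-supercritical parameter, argue directly; in every case it is the $1/|V|$ term in the statement that absorbs the residual discretisation and near-connectivity fluctuation.
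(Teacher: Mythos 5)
Your proof is correct and follows essentially the same strategy as the paper's: march along a geometric sequence of parameters, track the typical density of the largest cluster, locate by pigeonhole two nearby rungs where the typical density barely changes, and then use the $\Delta$-sharp density property to convert the weak bound ``$\p(\|K_1\|\geq\beta)\geq\delta$'' at one rung into the strong bound ``$\geq 1-\delta$'' at a slightly higher one. The only real deviation is that you set the quantile's confidence level to $\delta=\Delta(\eps)$ directly and apply the sharp density property once (from $\delta$ to $1-\delta$, spending a single factor $e^{\delta}$), whereas the paper defines its typical density $\lambda_i$ at confidence $\eps$ and then pays $e^{\Delta(\eps)}$ in each direction ($\eps\to\Delta(\eps)$ and $\eps\to 1-\Delta(\eps)$), which is why the paper pigeonholes over even-indexed rungs and reads off $q$ at the odd midpoint. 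Your version dispenses with the factor-of-two spacing and correspondingly wins a factor of $2$ in the final deviation bound ($2\Delta(\eps)/\eps$ rather than $4\Delta(\eps)/\eps$); the two ladders running in opposite directions (you descend from $p$, the paper ascends from $p_c(\eps,\eps)$) are immaterial. The flagged caveat near density $1$ — that the sharp density property is only stated for $\alpha<1$ so one should read $\|K_1\|\geq 1$ as $\|K_1\|\geq 1-\tfrac{1}{2|V|}$ — is a real but cosmetic wrinkle, and the paper's own proof silently elides the same point when it invokes the inequality ``for every $\beta\in[\eps,1]$''.
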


Roughly speaking, the idea behind the following proof is that if we pick $q$ such that the \emph{median} - or any other particular quantile - of the density of the largest cluster increases slowly across a small neighbourhood of $q$ then the sharp density property implies that the density of the giant at $q$ must be concentrated; such a $q$ can always be found since a bounded increasing function cannot increase rapidly everywhere.

\begin{proof}[Proof of \cref{lem:inmain:concentration}]
We may assume $4 \Delta(\eps) \leq \eps$, the lemma being trivial otherwise.
	Consider the increasing sequence of reals $q_0,q_1,\dots$ given by
		$q_j := e^{j\Delta(\eps)} p_c(\eps,\eps)$ for each $j\geq 0$, and let $k$ be the maximum integer such that $q_{2k} \leq p$. We start by finding a simple lower bound for $k$. Since $p$ is $\eps$-supercritical, we know $(1-\eps)^{-1} \cdot p_c(\eps,\eps) \leq p$ and hence that $k \geq r$ for any integer $r$ satisfying $e^{2r\Delta(\eps)} \leq (1-\eps)^{-1}$. It follows in particular that
\begin{equation}
\label{eq:finding_q_k_lower}
	k \geq \left \lfloor \frac{1}{2} \cdot \frac{\log 1/(1-\eps)}{\Delta(\eps)} \right \rfloor 
	\geq \left \lfloor \frac{1}{2} \cdot \frac{\log (1+\eps)}{\Delta(\eps)} \right \rfloor 
	\geq \left \lfloor \frac{\eps}{4 \Delta(\eps)} \right \rfloor \geq \frac{\eps}{8\Delta(\eps)},
 \end{equation}
where we used the inequality $1/(1-x)\geq 1+x$ for $0\leq x <1$ in the first inequality, the inequality $\log (1+x) \geq x/2$ for $0 \leq x \leq 1$ in the second inequality, and the assumption $4 \Delta(\eps) \leq \eps$ in the final inequality.

Now, for each $i\geq 0$ we define the density $\lambda_i$ of $K_1$ under $\P_{q_i}$ by
\[
	\lambda_i := \max \{ \beta \in [0,1] : \p_{q_i} \bra{ \den{K_1} \geq \beta } \geq \eps\}, 
\]
so that $\lambda_i \geq \lambda_0  =\eps$ for every $i\geq 0$.
Since $\lambda_i$ is increasing in $i$ we have that
\[
	\sum_{i=1}^{k} |\lambda_{2i} - \lambda_{2(i-1)}| = \sum_{i=1}^{k} \lambda_{2i} - \lambda_{2(i-1)} = \lambda_{2k} - \lambda_0 \leq 1,
\]
and hence by the pigeonhole principle that there is some $j \in \{ 1 , \dots , k \}$ such that
\[
	|\lambda_{2j} - \lambda_{2(j-1)}| = \lambda_{2j} - \lambda_{2(j-1)} \leq \frac{1}{k} \leq \frac{8\Delta(\eps)}{\eps},
\]
where the final inequality follows from \eqref{eq:finding_q_k_lower}.

We will argue that the values
\[
q = q_{2j-1} \qquad \text{ and } \qquad \alpha = \frac{\lambda_{2j} + \lambda_{2(j-1)}}{2}
\]
satisfy the conclusions of the lemma.
Indeed, by definition of $\lambda$, we have that
\begin{equation} \label{eq:inmain:concentration:above_and_below}
	\p_{q_{2(j-1)}} \bra{ \den{K_1} \geq \lambda_{2(j-1)} } \geq \eps \qquad \text{but} \qquad \p_{q_{2j}} \bra{ \den{K_1} \geq \lambda_{2j} + \abs{V}^{-1} } < \eps.
\end{equation}
We also have by assumption that $4\Delta(\eps) \leq \eps$ and $\eps \leq 1/2$, so that $\Delta(\eps) \leq \eps \leq 1-\Delta(\eps)$ and hence by the definiton of the $\Delta$-sharp density property that
\[
	\max\left\{\frac{p_c \bra{ \beta , 1- \Delta(\eps) }}{p_c \bra{ \beta ,\eps }},\frac{p_c \bra{ \beta , \eps }}{p_c \bra{ \beta ,\Delta(\eps) }}\right\} \leq \frac{p_c \bra{ \beta , 1- \Delta(\eps)) }}{p_c \bra{ \beta ,\Delta(\eps) }}  \leq e^{\Delta(\eps)}
\]
 for every $\beta \in [\eps,1]$. Applying this inequality with the values $\beta=\lambda_{2(j-1)}$ and $\beta=(\lambda_{2j}+\abs{V}^{-1})\wedge 1$ and using that $e^{-\Delta(\eps)} q_{2j} = q = e^{\Delta(\eps)} q_{2(j-1)}$, we deduce from \eqref{eq:inmain:concentration:above_and_below} that 
\[
	\p_{q} \bra{ \den{K_1} \geq \lambda_{2(j-1)} } \geq 1-\Delta(\eps) \qquad \text{and} \qquad \p_q \bra{ \den{K_1} \geq \lambda_{2j} + \abs{V}^{-1}} \leq \Delta(\eps).
\]
Since $|\alpha-\lambda_{2(j-1)}|$ and $|\alpha-\lambda_{2j}|$ are both bounded by $4 \Delta(\eps)/\eps$, it follows that
\[
	\p_q \bra{ \abs{ \den{K_1} - \alpha } \geq \frac{4 \Delta(\eps)}{\eps} + \frac{1}{\abs{V}} } \leq 2 \Delta(\eps)
\]
as claimed.
\end{proof}

\subsection{Concentration implies uniqueness}
\label{subsec:concentration_implies_uniqueness}

By applying \Cref{lem:inmain:concentration} with our fixed parameter $p$, we obtain a parameter $q \in (p_c(\eps,\eps),1)$ and a density $\alpha \geq \eps$ that satisfy \eqref{eq:inmain:concentration}. We next argue that concentration of $\den{K_1}$ under $\p_q$ implies uniqueness of the giant cluster under $\p_q$.

\begin{lem} \label{lem:inmain:concentration_implies_uniqueness}
	Let $G=(V,E)$ be a finite graph, let $q \in (0,1]$, and let $\tau := \min_{u,v \in V} \p_q \bra{ u \leftrightarrow v }$. The estimate
	\[
		\p_q \bra{ \den{K_2} \geq 2 \delta } \leq \bra{ 1+ \frac{1}{4 \delta^2 \tau} } \p_q \bra{ \abs{ \den{K_1} - \alpha } \geq \delta}
	\]
	holds for every $\alpha,\delta >0$.
\end{lem}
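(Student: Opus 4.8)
The plan is to use a second-moment / union-bound type argument on the set of vertices whose cluster has density close to $\alpha$. Let $\delta > 0$, and let $W := \{v \in V : \|K_v\| \geq \delta\}$ be the random set of vertices lying in a cluster of density at least $\delta$. On the event $\{\|K_2\| \geq 2\delta\}$ there are at least two distinct clusters of density $\geq 2\delta$, and in particular at least $2\delta|V|$ of the vertices of $W$ lie in a cluster of density at least $2\delta$ that is \emph{not} the largest cluster $K_1$; call this set of vertices $W'$. So on the event $\{\|K_2\|\geq 2\delta\}$ we have $|W'| \geq 2\delta|V|$, and every vertex of $W'$ lies in a cluster of density at least $2\delta$ disjoint from $K_1$. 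The key point is that if such a $W'$ is large, then $\|K_1\|$ is forced to be far from $\alpha$: more precisely, I will show that if $\|K_1\|$ is close to $\alpha$ then it is \emph{unlikely} (via the two-point lower bound $\tau$) that a large disjoint second cluster exists.

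First I would set up the quantitative comparison. Consider a uniformly random vertex $U \in V$ independent of the percolation configuration, and condition on the configuration. Given that $v \in W'$, the cluster $K_v$ has size at least $2\delta|V|$, so the conditional probability that $U \in K_v$ is at least $2\delta$. Hence $\mathbb{P}(U \in W' \mid \omega) \cdot (\text{something})$... — more cleanly: by a first-moment computation, $\e_q[|W'|^2 / |V|^2] \geq$ (probability two independent uniform vertices lie in the same non-giant cluster of density $\geq 2\delta$). The cleaner route is the following contradiction-style estimate. On the event $\{\|K_2\|\geq 2\delta\}$, pick the second-largest cluster $K_2$; it has $\geq 2\delta |V|$ vertices and is disjoint from $K_1$, which has $\geq 2\delta|V|$ vertices too (since $|K_1|\geq|K_2|$). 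Thus for two independent uniform vertices $U_1, U_2$, conditionally on $\omega$ we have $\mathbb{P}(U_1 \in K_1, U_2 \in K_2 \mid \omega) \geq (2\delta)(2\delta) = 4\delta^2$ on this event, and in particular $U_1, U_2$ lie in distinct clusters. On the other hand, by the two-point lower bound, $\mathbb{P}_q(U_1 \leftrightarrow U_2) = \e_q[\mathbb{P}(U_1\leftrightarrow U_2\mid \omega)]\ge \tau$; and $U_1\leftrightarrow U_2$ forces $\|K_{U_1}\| = \|K_{U_2}\|$, so if both lie in a cluster of density $\geq 2\delta$ and that density is moreover within $\delta$ of $\alpha$... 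This is where I need to thread together the two events.

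The clean argument I would actually write: decompose according to whether $\|K_1\|$ is within $\delta$ of $\alpha$. We bound
\[
\p_q(\|K_2\| \ge 2\delta) \le \p_q(|\|K_1\|-\alpha|\ge \delta) + \p_q\bigl(\|K_2\|\ge 2\delta,\ |\|K_1\|-\alpha|<\delta\bigr),
\]
and it remains to bound the second term by $\frac{1}{4\delta^2\tau}\,\p_q(|\|K_1\|-\alpha|\ge\delta)$. On the event in the second term, both $K_1$ and $K_2$ have density $\ge 2\delta$, and $\|K_1\|\in(\alpha-\delta,\alpha+\delta)$; but then every vertex $v\in K_2$ has $\|K_v\|=\|K_2\|\le\|K_1\|<\alpha+\delta$, and also, crucially, $v\not\leftrightarrow w$ for $w\in K_1$. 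Now take a uniform vertex $U$ independent of $\omega$: on this event, $\mathbb P(U\in K_1\mid\omega)\ge 2\delta$, and for such $U$ we have $\|K_U\|=\|K_1\|$, which is \emph{within $\delta$ of $\alpha$} — so this doesn't directly give a contradiction. Instead I use $K_2$: on this event $\mathbb P(U\in K_2\mid\omega)\ge 2\delta$, and for such $U$, $U$ is \emph{not} connected to any vertex of $K_1$. Hmm — the genuinely clean inequality is obtained by a change of perspective: for a uniform vertex $U$, consider the event $B=\{\|K_U\|\ge 2\delta \text{ and } U\notin K_1\}$. On $\{\|K_2\|\ge2\delta\}$, $\mathbb P(B\mid\omega)\ge 2\delta$, so $\mathbb P_q(B)\ge 2\delta\,\p_q(\|K_2\|\ge2\delta)$. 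For $U$ with $\|K_U\|\ge2\delta$ and $U\notin K_1$, take a \emph{second} uniform $U'$: $\mathbb P(U'\in K_U\mid\omega, U)\ge 2\delta$, and then $U\leftrightarrow U'$ but $U'\notin K_1$. Combining, $\mathbb P_q(U\leftrightarrow U', \{U,U'\}\cap K_1=\emptyset)\ge (2\delta)^2\,\p_q(\|K_2\|\ge2\delta)/1$... and on the complementary side, $U\leftrightarrow U'$ with $U\notin K_1$ has probability... I expect the honest bookkeeping to pin the constant as $4\delta^2\tau$, via: whenever $U\leftrightarrow U'$ and the common cluster has density within $[\,2\delta, \alpha+\delta)$ but $\|K_1\|$ is within $\delta$ of $\alpha$, one still needs $\|K_1\|$ itself to differ from $\alpha$ by at least $\delta$ — so the event forces $|\|K_1\|-\alpha|\ge\delta$ after all, because $\|K_1\|\ge\|K_U\|$ would then push $\|K_1\|\ge 2\delta$...

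Let me state the mechanism I am confident works and would write out carefully: condition on $\{\|K_2\|\ge2\delta\}$. Either $\|K_1\|\ge\alpha+\delta$ (contributing to $\{|\|K_1\|-\alpha|\ge\delta\}$ directly), or $\|K_1\|<\alpha+\delta$, in which case $K_1$ and $K_2$ are two disjoint clusters each of density in $[2\delta,\alpha+\delta)$. In the latter case, sample independent uniform $U,U'$; with conditional probability $\ge(2\delta)(2\delta)=4\delta^2$ we have $U\in K_1$, $U'\in K_2$, hence $U\not\leftrightarrow U'$. Therefore
\[
4\delta^2\,\p_q\bigl(\|K_2\|\ge2\delta,\ \|K_1\|<\alpha+\delta\bigr)\ \le\ \p_q(U\not\leftrightarrow U')\ =\ 1-\e_q\bigl[\mathbb P(U\leftrightarrow U'\mid\omega)\bigr].
\]
That is not yet of the right form, so the actual argument must be the conditional one: restricted to $\{U'\in K_2\}$, $U'$ lies in a cluster of density $<\alpha+\delta$ and $U'\notin K_1$; then using a \emph{third} uniform vertex hitting $K_1$ with conditional probability $\ge 2\delta$... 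I expect the correct statement to come out by comparing $\p_q(\exists \text{ two disjoint clusters of density}\ge2\delta)$ with $\frac{1}{4\delta^2}\e_q[\mathbf 1(\text{the density-}\ge2\delta\text{ cluster through }U\text{ is not }K_1)]$ and then noting that on $\{|\|K_1\|-\alpha|<\delta\}$, the cluster through a uniform $U$ having density $\ge 2\delta$ but being $\ne K_1$ still occurs with conditional probability $\le$ (something)$/\tau$ because connection probabilities are $\ge\tau$ and there can be at most $1/(2\delta)$ disjoint clusters of density $\ge 2\delta$, forcing one of them to be the ``$\alpha$-cluster''. \textbf{The main obstacle} is exactly this last combinatorial step — correctly extracting the factor $1/(4\delta^2\tau)$ from the interplay between ``there is a second large cluster'', ``the large clusters have controlled density'', and the uniform lower bound $\tau$ on two-point functions; the rest (union bound, uniform-vertex first moments, Markov) is routine. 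I anticipate the cleanest writeup introduces the random variable $N_\beta := \#\{\text{clusters of density}\ge\beta\}$, uses $\sum_{v}\mathbf 1(\|K_v\|\ge2\delta)\ge 4\delta|V|\cdot\mathbf 1(\|K_2\|\ge2\delta)$, and then a reverse-Markov/Paley–Zygmund estimate against $\e_q[\,\cdot\mid U\leftrightarrow U']\ge\tau$ to land the stated inequality.
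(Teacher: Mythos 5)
The proposal never arrives at a proof: you explicitly identify ``the main obstacle'' yourself and leave it unresolved. The union bound decomposition and the idea of sampling uniform vertex pairs hitting $K_1$ and $K_2$ with conditional probability at least $4\delta^2$ are both on the right track, but the step that actually makes the two-point lower bound $\tau$ enter is missing. You try to route through $\p_q(U\not\leftrightarrow U')$, which is a dead end: the hypothesis gives $\p_q(u\leftrightarrow v)\geq\tau$, hence only $\p_q(u\not\leftrightarrow v)\leq 1-\tau$, which is useless here.

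The missing idea is to make the two-point bound work for you via Harris's inequality rather than against you via disconnection. Decompose around the \emph{lower} deviation $\{\|K_1\|\geq\alpha-\delta\}$ rather than the upper one $\{\|K_1\|<\alpha+\delta\}$. On $\{\|K_2\|\geq 2\delta\}\cap\{\|K_1\|\geq\alpha-\delta\}$, any $u\in K_2$ and $v\in K_1$ give $\|K_u\cup K_v\|\geq 2\delta + (\alpha-\delta)=\alpha+\delta$, and there are $\geq 4\delta^2|V|^2$ such pairs, so by averaging there exists a fixed pair $(u,v)$ with $\p_q(\|K_u\cup K_v\|\geq\alpha+\delta)\geq 4\delta^2\bigl[\p_q(\|K_2\|\geq2\delta)-\p_q(|\|K_1\|-\alpha|\geq\delta)\bigr]$. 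The event $\{\|K_u\cup K_v\|\geq\alpha+\delta\}$ is increasing, as is $\{u\leftrightarrow v\}$, so Harris gives $\p_q(\|K_u\cup K_v\|\geq\alpha+\delta,\,u\leftrightarrow v)\geq\tau\cdot\p_q(\|K_u\cup K_v\|\geq\alpha+\delta)$. But on the intersection, $K_u=K_v$ has density $\geq\alpha+\delta$, so $\|K_1\|\geq\alpha+\delta$, which lands squarely in $\{|\|K_1\|-\alpha|\geq\delta\}$. Chaining the inequalities and rearranging gives exactly the stated bound. The point you missed is that one should \emph{force} the merger of $K_1$ and $K_2$ (paying the Harris factor $\tau$) and observe this pushes $\|K_1\|$ out of the concentration window from above, rather than trying to quantify the cost of $K_1$ and $K_2$ staying separate.
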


The idea is that by the two-point connection property (and positive association), on any increasing event, we can connect $K_1$ and $K_2$ to form a new largest cluster $K_1$ with good probability. Thus, given that $\den{K_1}$ is concentrated, $\den{K_1 \sqcup K_2}$ must be close to $\den{K_1}$ with high probability. Equivalently, $\den{K_2} = \den{K_1 \sqcup K_2} - \den{K_1}$ must be close to zero with high probability.

\begin{proof}[Proof of \cref{lem:inmain:concentration_implies_uniqueness}]
By the union bound,
\[
	\p_q \bra{ \den{K_2} \geq 2 \delta \text{ and } \den{K_1} \geq \alpha - \delta} \geq \p_q \bra{ \den{K_2} \geq 2 \delta } - \p_q \bra{ \abs{ \den{K_1} - \alpha } \geq \delta }. 
\]
On the event that $\den{K_2} \geq 2 \delta$ and $\den{K_1} \geq \alpha - \delta$, every pair of vertices $u,v$ with $u \in K_2$ and $v \in K_1$ has $\den{ K_u \cup K_v } \geq \alpha + \delta$, and there are at least $4\delta^2 \abs{V}^2$ such pairs. So, by linearity of expectation,
\begin{align}
	\max_{u,v\in V} \p_q \bra{ \den{K_u \cup K_v} \geq \alpha + \delta } &\geq  \frac{1}{\abs{V}^2} \sum_{u,v \in V} \p_q \bra{ \den{K_u \cup K_v} \geq \alpha + \delta } \nonumber\\ &\geq 4 \delta^2 \left[\p_q \bra{ \den{K_2} \geq 2 \delta } - \p_q \bra{ \abs{ \den{K_1} - \alpha } \geq \delta }\right].
\end{align}
 When $\den{K_u \cup K_v} \geq \alpha + \delta$ and $u \leftrightarrow v$, we are guaranteed to have $\den{K_1} \geq \alpha + \delta$ and hence that $\abs{ \den{K_1} - \alpha } \geq \delta$. It follows by Harris's inequality that
\begin{align}
	\p_q \bra{ \abs{ \den{K_1} - \alpha } \geq \delta } &\geq \max_{u,v\in V}\p_q \bra{\den{K_u \cup K_v} \geq \alpha + \delta} \cdot \p_q \bra{ u \leftrightarrow v } \nonumber\\
	&\geq 4 \delta^2 \tau \left[\p_q \bra{ \den{K_2} \geq 2 \delta } - \p_q \bra{ \abs{ \den{K_1} - \alpha } \geq \delta }\right],
\end{align}
and the claim follows by rearranging.
\end{proof}

\subsection{Proof of \cref{thm:main_single_graph} via sandcastles}
\label{subsec:sandcastles}

So far we have obtained good control over $\den{K_1}$ and $\den{K_2}$ under $\p_q$, where $q$ is a well-chosen parameter $p_c(\eps,\eps) \leq q \leq p$. We now need to convert this into an upper bound on the probability that the second largest cluster is large under $\p_p$. We do this by introducing an object we call a \emph{sandcastle}. This is defined in terms of the canonical monotone coupling $(\omega_q,\omega_p)$ of the percolation measures $\p_q$ and $\p_p$ with $q \leq p$ on any given graph, where each closed edge of $\omega_p$ is also closed in $\omega_q$ and each open edge of $\omega_p$ is open in $\omega_q$ with probability $q/p$. We write $\p_{q,p}$ for the joint law of this coupling. (Recall that in this coupling, when we condition on $\omega_p$, the states of the edges in $\omega_q$ are still independent of each other.) Informally, a sandcastle is a large connected subgraph of $G$ with the property that even knowing that the subgraph is entirely open in $\omega_p$, there remains a good condtional probability that it contains no large cluster for $\omega_q$. We fix this `good probability' to be $1/2$ in the following definition, but we could have used any other universal constant in $(0,1)$.

\begin{defn}
	Let $G=(V,E)$ be a finite graph. Let $0\leq q \leq p\leq 1$ and let $0\leq \alpha,\beta \leq 1$. A $[(p,\beta) \to (q,\alpha)]$\emph{-sandcastle} is a connected subgraph $S \subseteq G$ such that 
		$\den{S} \geq \beta$ and 
		\[\p_{q,p} \bra{ \den{ K_1(\omega_q \cap S) } < \alpha \mid S \subseteq \omega_p } \geq \frac{1}{2}.\]
\end{defn}

We now show that non-uniqueness of the giant cluster under $\p_p$ and uniqueness of the giant cluster under $\p_q$ together imply that some cluster must be a sandcastle with good probability under the measure $\p_p$.

\begin{lem} \label{lem:inmain:sandcastles:existence}
	Let $G=(V,E)$ be a finite graph. For each $0 \leq q \leq p\leq 1$, and $0<\alpha,\beta<1$ there exists a vertex $u$ such that
	\begin{multline*}
		\p_p \bra{ K_u \text{ is a } [(p,\beta) \to (q,\alpha)]\text{-sandcastle} } \\\geq \beta \sqbra{ \p_p \bra{ \den{K_2} \geq \beta } - 4 \p_q \bra{ \den{K_2} \geq \alpha } }.
	\end{multline*}
\end{lem}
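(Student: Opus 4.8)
The plan is to sample from the monotone coupling $\p_{q,p}$ and track how the second-largest cluster of $\omega_p$ behaves as we thin out to $\omega_q$. Fix $0\le q\le p\le 1$ and $0<\alpha,\beta<1$, and work on the event $\{\|K_2(\omega_p)\|\ge\beta\}$. On this event there are at least two distinct clusters of $\omega_p$ of density $\ge\beta$; call them $K_1(\omega_p)$ and $K_2(\omega_p)$. The key dichotomy: after thinning to $\omega_q$, either at least one of these two $\omega_p$-clusters still contains an $\omega_q$-cluster of density $\ge\alpha$, or neither does. In the former case the two surviving large pieces live in disjoint subgraphs of $\omega_q$, so at most one of them can be $K_1(\omega_q)$; hence the other witnesses $\|K_2(\omega_q)\|\ge\alpha$. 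Thus, writing $\mathcal C$ for the (ordered) pair $(K_1(\omega_p),K_2(\omega_p))$ and $N(C)$ for the indicator that an $\omega_p$-cluster $C$ fails to retain an $\omega_q$-cluster of density $\ge\alpha$, we get the deterministic inclusion
\[
\{\|K_2(\omega_p)\|\ge\beta\}\subseteq\{\|K_2(\omega_q)\|\ge\alpha\}\cup\bigl(\{\|K_2(\omega_p)\|\ge\beta\}\cap\{N(K_1(\omega_p))=N(K_2(\omega_p))=1\}\bigr).
\]
Taking $\p_{q,p}$-probabilities, on the second event at least one of $K_1(\omega_p),K_2(\omega_p)$ is an $\omega_p$-cluster of density $\ge\beta$ that, conditionally on being fully open in $\omega_p$, fails to contain an $\omega_q$-cluster of density $\ge\alpha$ — but we need the conditional failure probability to be $\ge 1/2$, not merely positive on this particular configuration, to invoke the sandcastle definition.

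To bridge that gap I would argue as follows. Call an $\omega_p$-cluster $C$ \emph{bad} if it is connected (automatic), has $\|C\|\ge\beta$, and satisfies $\p_{q,p}(\|K_1(\omega_q\cap C)\|<\alpha\mid C\subseteq\omega_p)<1/2$ — i.e.\ it is \emph{not} a sandcastle. Conditioning on the value of a bad cluster $C=S$, the event $\{N(S)=1\}=\{\|K_1(\omega_q\cap S)\|<\alpha\}$ has conditional probability $<1/2$ given $\{S\subseteq\omega_p\}$ (the status of edges inside $S$ being conditionally independent of the rest). Summing over the at-most-two candidate clusters and over all possible values $S$, one gets
\[
\p_{q,p}\bigl(\exists\,\omega_p\text{-cluster }C\text{ bad with }N(C)=1\bigr)\le 2\cdot\tfrac12\cdot\max_S\,\p_p(C=S\text{ for some }\omega_p\text{-clufor }),
\]
which is not quite the clean bound I want; instead the cleaner route is: on $\{\|K_2(\omega_p)\|\ge\beta\}\cap\{N(K_1)=N(K_2)=1\}$, if \emph{neither} of $K_1(\omega_p),K_2(\omega_p)$ is a sandcastle then both are bad, and the probability of a fixed bad cluster $S$ having $N(S)=1$ is $<1/2$; so a union bound over the (at most two) bad clusters gives that this "neither is a sandcastle" sub-event has probability at most $2\cdot\tfrac12\,\p_{q,p}(\text{that sub-event})$, forcing it to have probability $0$. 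Hence on $\{\|K_2(\omega_p)\|\ge\beta\}\cap\{N(K_1)=N(K_2)=1\}$, almost surely at least one of the two clusters \emph{is} a sandcastle. Therefore
\[
\p_p\bigl(\|K_2\|\ge\beta\bigr)\le\p_q\bigl(\|K_2\|\ge\alpha\bigr)+\p_p\bigl(\exists\,\omega_p\text{-cluster that is a sandcastle and has density}\ge\beta\bigr).
\]

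It remains to convert the existence of \emph{some} sandcastle-cluster into the existence of a \emph{specific vertex} $u$ with $\p_p(K_u\text{ is a sandcastle})$ large, which is a standard averaging/vertex-transitivity-free argument: if with probability $r$ there is a sandcastle-cluster of density $\ge\beta$, then on that event at least $\beta|V|$ vertices $u$ have $K_u$ a sandcastle, so $\sum_u\p_p(K_u\text{ is a }[(p,\beta)\to(q,\alpha)]\text{-sandcastle})\ge\beta|V|\,r$, and some $u$ achieves at least the average $\beta r$. Combining, for that $u$,
\[
\p_p\bigl(K_u\text{ is a }[(p,\beta)\to(q,\alpha)]\text{-sandcastle}\bigr)\ge\beta\bigl[\p_p(\|K_2\|\ge\beta)-\p_q(\|K_2\|\ge\alpha)\bigr],
\]
and the factor $4$ in the statement is slack I expect to come from being less careful than the above in the dichotomy bound (e.g.\ bounding $\p_{q,p}(\|K_2(\omega_q)\|\ge\alpha)$ by something like $4\p_q(\|K_2\|\ge\alpha)$ after accounting for the two clusters and the two ways thinning can fail). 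The main obstacle is the middle step: making rigorous that "not a sandcastle" forces the conditional disintegration-failure probability below $1/2$ \emph{uniformly}, and hence that on the bad event at least one cluster must in fact be a sandcastle — this requires carefully conditioning on the cluster as a random subgraph and using that edges internal to it are conditionally Bernoulli$(q/p)$ under $\p_{q,p}(\,\cdot\mid S\subseteq\omega_p)$, independently of the exterior, so that the sandcastle condition is exactly the statement whose negation we are exploiting.
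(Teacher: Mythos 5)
Your overall plan — work under the monotone coupling $\p_{q,p}$, exploit the sandcastle definition through whether the top-two $\omega_p$-clusters retain large $\omega_q$-subclusters, and finish with an averaging-over-vertices step — is the right skeleton, and your final averaging step matches the paper's. But the middle of the argument has two genuine gaps.

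First, the inclusion
\[
\{\|K_2(\omega_p)\|\ge\beta\}\subseteq\{\|K_2(\omega_q)\|\ge\alpha\}\cup\bigl(\{\|K_2(\omega_p)\|\ge\beta\}\cap\{N(K_1(\omega_p))=N(K_2(\omega_p))=1\}\bigr)
\]
is false. If exactly one of $K_1(\omega_p), K_2(\omega_p)$ retains an $\omega_q$-cluster of density $\ge\alpha$, that surviving piece may well be $K_1(\omega_q)$ itself, giving no lower bound on $\|K_2(\omega_q)\|$; so configurations with $N(K_1)=0$, $N(K_2)=1$, $\|K_2(\omega_q)\|<\alpha$ violate your inclusion. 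To get $\|K_2(\omega_q)\|\ge\alpha$ you need \emph{both} to retain, i.e.\ $N(K_1)=N(K_2)=0$, so the residual event must be ``$N(K_1)=1$ \emph{or} $N(K_2)=1$'', not ``both''.

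Second, and more fundamentally, the ``forcing it to have probability $0$'' step is unsound. The sandcastle condition $\p_{q,p}(\|K_1(\omega_q\cap S)\|<\alpha\mid S\subseteq\omega_p)\ge 1/2$ is a statement about a \emph{conditional} probability given $\omega_p$; its negation does not convert into an unconditional union bound on a random cluster. Your self-referential inequality $\p(E)\le 2\cdot\tfrac12\cdot\p(E)$ is just $\p(E)\le\p(E)$, which is vacuous. And the underlying claim — that on the event where both top-two clusters disintegrate and neither is a sandcastle, at least one must in fact be a sandcastle almost surely — is simply false: if neither is a sandcastle, then conditionally on $\omega_p$ the two (disjoint, hence conditionally independent) clusters \emph{both} disintegrate with probability strictly less than $(1/2)^2=1/4$, which is positive, so that event generically has positive probability.

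The paper sidesteps both issues by fixing a configuration $\nu$ with $\|K_2(\nu)\|\ge\beta$ in which neither $A:=K_1(\nu)$ nor $B:=K_2(\nu)$ is a sandcastle, and directly computing that, conditionally on $\omega_p=\nu$, both $A$ and $B$ retain an $\omega_q$-cluster of density $\ge\alpha$ with probability $\ge(1/2)^2=1/4$ (using conditional independence of the edges inside the two disjoint sets), which forces $\|K_2(\omega_q)\|\ge\alpha$ with conditional probability $\ge 1/4$. Integrating over $\nu$ gives $\p_q(\|K_2\|\ge\alpha)\ge\tfrac14\p_p(\cE)$ where $\cE$ is the event that $\|K_2(\omega_p)\|\ge\beta$ with no sandcastle cluster, and rearranging yields the $-4\p_q(\|K_2\|\ge\alpha)$ term before the vertex-averaging. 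So the factor $4$ is not slack to be tightened — it is exactly the cost of requiring both clusters to survive simultaneously — and the factor-$1$ bound you were aiming for is not reachable by this route.
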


\begin{proof}[Proof of \cref{lem:inmain:sandcastles:existence}]
	Consider a configuration $\nu\in\{ 0,1\}^E$ in which $\den{K_2} \geq \beta$ but no clusters are $[(p,\beta) \to (q,\alpha)]$-sandcastles. Let $A := K_1(\nu)$ and $B := K_2(\nu)$. Since $\den{A}\geq \beta$ but $A$ is not a $[(p,\beta) \to (q,\alpha)]$-sandcastle, we know by the definition of sandcastles that
	\[
		\p_{q,p} \bra{ \den{K_1( \omega_q \cap A )} \geq \alpha \mid \omega_p = \nu }=\p_{q,p} \bra{ \den{K_1( \omega_q \cap A )} \geq \alpha \mid A \subseteq \omega_p } \geq \frac{1}{2}.
	\]
	The same result holds for $B$. Since $A$ and $B$ are disjoint, the restrictions of $\omega_q$ to $A$ and $B$ are conditionally independent given $\omega_p$, hence
	\[
		\p_{q,p} \bra{ \den{K_1( \omega_q \cap A )} \geq \alpha \text{ and } \den{K_1( \omega_q \cap B )} \geq \alpha \mid \omega_p = \nu } \geq \frac{1}{4}.
	\]
	The edges in the boundary of $A$ are all closed in $\nu$, disconnecting $A$ from $B$. Since $\omega_q \leq \omega_p$,  these edges are also closed in $\omega_q$ when $\omega_p=\nu$. In particular, given that $\omega_p = \nu$, the subgraphs $K_1( \omega_q \cap A )$ and $K_1( \omega_q \cap B )$ are not connected to each other in $\omega_q$, hence
	\begin{multline*}
		\p_{q,p} \bra{ \den{K_2( \omega_q )} \geq \alpha \mid \omega_p = \nu } \\\geq \p_{q,p} \bra{ \den{K_1( \omega_q \cap A )} \geq \alpha \text{ and } \den{K_1( \omega_q \cap B )} \geq \alpha \mid \omega_p = \nu } \geq \frac{1}{4}.
	\end{multline*}
	Letting $\mathcal E$ be the event that $\|K_2(\omega_p)\| \geq \beta$ but no cluster in $\omega_p$ is a $[(p,\beta) \to (q,\alpha)]$-sandcastle, it follows since $\nu \in \cE$ was arbitrary that
	\[
		\p_{q,p} \bra{ \den{K_2( \omega_q )} \geq \alpha \mid \omega_p \in \mathcal E} \geq \frac{1}{4}.
	\]
	It follows from this and a union bound that
	\spliteq{
		\p_q \bra{ \den{K_2} \geq \alpha } &\geq \p_{q,p} \bra{ \den{K_2( \omega_q )} \geq \alpha \mid \omega_p \in \mathcal E} \cdot \p_p \bra{ \mathcal E }\\
		&\geq \begin{multlined}[t]\frac{1}{4} \bigg( \p_p \bra{ \den{K_2} \geq \beta } \\- \p_p \bra{ \text{some cluster is a } [(p,\beta) \to (q,\alpha)]\text{-sandcastle}} \bigg),\end{multlined}
	}
	which rearranges to give that
	\begin{multline}
	\label{eq:sandcastle_union_bound}
		\p_p \bra{ \text{some cluster is a } [(p,\beta) \to (q,\alpha)]\text{-sandcastle}} \\\geq \p_p \bra{ \den{K_2} \geq \beta } - 4 \p_q \bra{ \den{K_2} \geq \alpha }.
	\end{multline}
	Every cluster that is a $[(p,\beta) \to (q,\alpha)]$-sandcastle contains at least $\beta \abs{V}$ vertices by definition, and we deduce by linearity of expectation that
	\begin{align*}
		\max_{u\in V} \p_p &\bra{ K_u \text{ is a } [(p,\beta) \to (q,\alpha)] \text{-sandcastle} } \\&\geq 
		\frac{1}{|V|}\sum_{u\in V} \p_p \bra{ K_u \text{ is a } [(p,\beta) \to (q,\alpha)] \text{-sandcastle} }
		\nonumber\\ &\geq
		\beta \p_p \bra{ \text{some cluster is a } [(p,\beta) \to (q,\alpha)]\text{-sandcastle}}.
	\end{align*}
	The claimed inequality follows from this and \eqref{eq:sandcastle_union_bound}.
\end{proof}

We want to use the fact that $K_u$ is a sandcastle with good probability to contradict the concentration of $\den{K_1}$ under $\p_q$. The rough idea is as follows: as we pass from $\omega_p$ to $\omega_q$, with good probability this sandcastle disintegrates into only small clusters, none of which are equal to the giant cluster $K_1(\omega_q)$. Since the status of any edge that does not touch the cluster of the vertex $u$ in $\omega_p$ remains conditionally distributed as Bernoulli percolation, this implies that there exists a large set of vertices whose complement contains, with good probability, an $\omega_q$ cluster whose density is close to the typical density of the largest cluster in the whole graph. Using Harris' inequality and uniqueness of the giant cluster in $\omega_q$, we deduce that $\den{K_1(\omega_q)}$ is abnormally high with good probability, contradicting the concentration of the giant cluster's density under $\p_q$.

\medskip

We now begin to make this argument precise.
 For each subgraph $H$ of $G$, let $\overline{H}$ denote the set of all edges that have at least one endpoint in the vertex set of $H$.

\begin{lemma}
\label{lem:localization}
Let $G=(V,E)$ be a finite, vertex-transitive graph and let $q\in (0,1]$. The estimate
\begin{multline}
  \P_q\Bigl(\bigl\|K_1\bigl(\omega \setminus \overline{H}\bigr)\bigr\| \geq \beta \Bigr) \cdot \frac{2\beta \P_q(\|K_1\|\geq \beta)-\beta^2}{2-\beta^2}  \\\leq   \p_q \bra{ \|K_1\| \notin \Big(\beta,\beta + \frac{\beta^2}{2}\|H\|\Big) } + \p_q \bra{ \den{K_2} \geq \beta}
\end{multline}
holds for every subgraph $H$ of $G$ and every $0<\beta<1$.
\end{lemma}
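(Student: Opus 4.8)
The plan is to derive the estimate from a single deterministic set-inclusion, combined with a reverse Markov bound and one application of Harris's inequality to exploit positive correlation. Throughout write $D_\beta:=\{\den{K_1(\omega\setminus\overline H)}\ge\beta\}$ for the event on the left-hand side, $E_\beta:=\{\den{K_1}\ge\beta\}$, $F_1:=\{\den{K_1}\notin(\beta,\beta+\tfrac{\beta^2}{2}\den H)\}$ and $F_2:=\{\den{K_2}\ge\beta\}$; since $\omega\setminus\overline H\subseteq\omega$ we have $D_\beta\subseteq E_\beta$, so on $D_\beta$ we always have $\den{K_1}\ge\beta$. We may assume $2\beta\,\P_q(E_\beta)>\beta^2$ (the claimed bound being vacuous otherwise, as its right-hand side is then non-positive) and $\beta\abs V>1$ (the bound being immediate otherwise, since then $\P_q(D_\beta)=1$ while the right-hand side exceeds $1$). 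The key object is the $[0,1]$-valued \emph{increasing} random variable
\[
Y:=\frac{1}{\abs{V(H)}}\,\#\bigl\{v\in V(H):\den{K_v}\ge\beta\bigr\},
\]
where $V(H)$ denotes the vertex set of $H$.

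First I would establish the deterministic inclusion $D_\beta\cap\{Y\ge\beta^2/2\}\subseteq F_1\cup F_2$. Assume $D_\beta$ and $Y\ge\beta^2/2$ hold but $F_2$ fails, so $\den{K_2}<\beta$; then $\omega$ has a \emph{unique} cluster of density $\ge\beta$, and since $\den{K_1}\ge\beta$ (by $D_\beta\subseteq E_\beta$) this cluster is $K_1$, so $\{v:\den{K_v}\ge\beta\}=K_1$ exactly and $Y\ge\beta^2/2$ becomes $|K_1\cap V(H)|\ge\tfrac{\beta^2}{2}|V(H)|$. On the other hand $K_1(\omega\setminus\overline H)$ has $\ge\beta\abs V>1$ vertices; as every vertex of $V(H)$ is isolated in $\omega\setminus\overline H$, it is therefore contained in $V\setminus V(H)$, and being an $\omega$-connected set of density $\ge\beta$ it lies inside the unique large cluster $K_1$. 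Since $K_1(\omega\setminus\overline H)$ and $K_1\cap V(H)$ are disjoint subsets of $K_1$, we get $|K_1|\ge\beta\abs V+\tfrac{\beta^2}{2}|V(H)|$, i.e.\ $\den{K_1}\ge\beta+\tfrac{\beta^2}{2}\den H$, which means $\den{K_1}$ avoids the open interval $(\beta,\beta+\tfrac{\beta^2}{2}\den H)$, so $F_1$ holds.

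Next I would lower-bound $\E_q[Y\mid D_\beta]$. By vertex-transitivity $\P_q(\den{K_v}\ge\beta)$ is the same for every $v$, and summing these indicators over all of $V$ gives
\[
\abs V\,\P_q(\den{K_0}\ge\beta)=\E_q\Bigl[\sum_{C:\,\den C\ge\beta}|C|\Bigr]\ge\E_q\bigl[|K_1|\,\mathbf 1_{\den{K_1}\ge\beta}\bigr]\ge\beta\abs V\,\P_q(E_\beta),
\]
so $\P_q(\den{K_v}\ge\beta)\ge\beta\P_q(E_\beta)$ for every $v$. Both $\{\den{K_v}\ge\beta\}$ and $D_\beta$ are increasing events, so Harris's inequality gives $\P_q(\den{K_v}\ge\beta,\,D_\beta)\ge\beta\P_q(E_\beta)\P_q(D_\beta)$; summing over $v\in V(H)$ yields $\E_q[Y\mathbf 1_{D_\beta}]\ge\beta\P_q(E_\beta)\P_q(D_\beta)$, i.e.\ $\E_q[Y\mid D_\beta]\ge\beta\P_q(E_\beta)$. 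Applying Markov's inequality to the $[0,1]$-valued variable $1-Y$ then gives
\[
\P_q\bigl(Y\ge\beta^2/2\mid D_\beta\bigr)\ge\frac{\beta\P_q(E_\beta)-\beta^2/2}{1-\beta^2/2}=\frac{2\beta\P_q(E_\beta)-\beta^2}{2-\beta^2}.
\]
Multiplying by $\P_q(D_\beta)$ and using the inclusion from the previous paragraph, $\frac{2\beta\P_q(E_\beta)-\beta^2}{2-\beta^2}\,\P_q(D_\beta)\le\P_q\bigl(D_\beta\cap\{Y\ge\beta^2/2\}\bigr)\le\P_q(F_1)+\P_q(F_2)$, and rearranging is exactly the claimed inequality.

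The only genuinely delicate point is the deterministic inclusion — specifically the observation that $F_2^c$ forces there to be a \emph{single} cluster of density $\ge\beta$, which is what lets the counts $|K_1(\omega\setminus\overline H)|$ and $|K_1\cap V(H)|$ be added without overlap; once that is in place, the rest is Harris's inequality plus a one-line reverse Markov estimate, and I expect no further difficulty beyond bookkeeping the degenerate parameter ranges excluded at the outset.
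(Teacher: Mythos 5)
Your proof is correct, and it is essentially the same argument as the paper's reorganized in a slightly different order: the paper defines $X := \{v : \|K_v\| \ge \beta\}$, lower-bounds $\P_q(\|X \cap V(H)\| \ge \frac{\beta^2}{2}\|H\|)$ via an unconditional Markov bound, applies Harris's inequality once to the two increasing events $\{\|X\setminus V(H)\| \ge \beta\} \supseteq D_\beta$ and $\{\|X \cap V(H)\| \ge \frac{\beta^2}{2}\|H\|\}$ to lower-bound $\P_q(\|X\| \ge \beta + \frac{\beta^2}{2}\|H\|)$, and then upper-bounds that probability by $\P_q(F_1)+\P_q(F_2)$ using uniqueness, whereas you apply Harris per vertex to get $\E_q[Y \mid D_\beta] \ge \beta \P_q(E_\beta)$, then Markov conditionally on $D_\beta$, and finish with the deterministic inclusion $D_\beta \cap \{Y \ge \beta^2/2\} \subseteq F_1 \cup F_2$. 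Both routes hinge on the identical three ingredients (vertex-transitivity to get the per-vertex lower bound $\beta\P_q(E_\beta)$, Markov on the fraction of $V(H)$-vertices missing from large clusters, and Harris to decouple that fraction from $D_\beta$), and yield the same constant; your version has the mild advantage of making the disjointness of $K_1(\omega\setminus\overline H)$ and $V(H)$ and the degenerate ranges $2\beta\P_q(E_\beta)\le\beta^2$, $\beta|V|\le 1$ explicit, which the paper leaves implicit.
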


\begin{proof}[Proof of \cref{lem:localization}]
Let $X$ be the set of vertices that are contained in clusters with density at least $\beta$, noting that
\begin{equation}
	\p_q \bra{ X \not= K_1 } \leq \p_q \bra{ \den{K_1} \leq \beta} + \p_q \bra{ \den{K_2} \geq \beta}
\end{equation}
and hence that
\begin{multline}
\label{eq:X_to_K1K2}
\p_q \bra{ \den{X} \geq \beta + \frac{\beta^2}{2}\|H\| } \leq \p_q \bra{ \|K_1\| \geq \beta + \frac{\beta^2}{2}\|H\| } \\+ \p_q \bra{ \den{K_1} \leq \beta} + \p_q \bra{ \den{K_2} \geq \beta}.
\end{multline}

We have by Markov's inequality applied to $\|H \setminus X\|$ that
\begin{align}
	\p_q \bra{ \den{X \cap H} \geq \frac{\beta^2}{2}\|H\|  }&=1-\p_q \bra{ \den{H \setminus X} > \left(1- \frac{\beta^2 }{2}\right)\|H\|}
	\nonumber\\ &\geq 1-\left(1-\frac{\beta^2 }{2} \right)^{-1} \|H\|^{-1} \e_q \den{H\setminus X}\nonumber\\
	&= 1-\left(1-\frac{\beta^2 }{2} \right)^{-1} \P_q(\|K_u\|<\beta),
\end{align}
where $u$ is an arbitrary vertex and we used vertex-transitivity in the last line. Bounding $\P_q(\|K_u\| \geq \beta) \geq \beta \P_q(\|K_1\|\geq \beta)$ we deduce that
\begin{align}
	 \p_q \bra{ \den{X \cap H} \geq \frac{\beta^2}{2}\|H\|  }&\geq  1-\frac{2-2\beta\P_q(\|K_1\|\geq \beta)}{2-\beta^2} \\&= \frac{2\beta \P_q(\|K_1\|\geq \beta)-\beta^2}{2-\beta^2}
\end{align}
and hence by Harris' inequality that
\begin{align}
	\p_q \bra{ \den{X} \geq \beta + \frac{\beta^2}{2}\|H\| } &\geq \p_q \bra{ \den{X \setminus H} \geq \beta } \cdot\p_q \bra{ \den{X \cap H} \geq \frac{\beta^2}{2}\|H\|  } \nonumber\\
	& \geq  \p_q \bra{ \|K_1(\omega \setminus \overline{H})\| \geq \beta } \cdot \frac{2\beta \P_q(\|K_1\|\geq \beta)-\beta^2}{2-\beta^2}.\label{eq:X_uppertail}
\end{align}
The claim follows by combining \eqref{eq:X_to_K1K2} and \eqref{eq:X_uppertail}.
\end{proof}

\begin{proof}[Proof of \cref{thm:main_single_graph}]
Write $\tau=\tau(\eps)$ and $\Delta=\Delta(\eps)$, fix an $\eps$-supercritical parameter $p$, and let $q$ and $\alpha$ satisfying $\alpha \geq \eps$ be as in \cref{lem:inmain:concentration}. 
Define
\[
\delta=\frac{4 \Delta}{\eps} + \frac{1}{|V|} \qquad \text{ and } \qquad \beta_0 = \frac{25\delta}{\eps^2 \tau}= \frac{200\Delta}{\eps^3 \tau}+\frac{25}{\eps^2 \tau |V|},
\]
and fix some $\beta \geq \beta_0$. This value of $\delta$ is chosen so that
\begin{equation}
\label{eq:sandcastle_delta_def_reason}
\p_q\Bigl(\bigl|\|K_1\|-\alpha\bigr| \geq \delta\Bigr) \leq 2\Delta
\end{equation}
by \cref{lem:inmain:concentration}.
We will refer to $[(p,\beta) \to (q,\eps/2)]$-sandcastles simply as sandcastles for the remainder of the proof. 
It suffices to prove that
\[\p_p(\|K_2\|\geq \beta) < \frac{200 \Delta}{\eps^2\tau\beta} \leq  \eps\cdot \frac{\beta_0}{\beta},\] so we will suppose for contradiction that the reverse inequality 
\begin{equation}
\label{eq:sandcastle_contradiction_assumption}
\p_p(\|K_2\|\geq \beta) \geq \frac{200 \Delta}{\eps^2\tau \beta}
\end{equation}
holds. Since $\|K_2\|\leq 1$, in this case we must have that $\beta_0 \leq \beta\leq 1$ and hence that $\delta \leq \eps^2/25 \leq \eps/2$ and $\Delta \leq 1/200$.

\medskip

 Since $q \geq p_c(\eps,\eps)$, we can apply \cref{thm:2point_main} to bound the minimal connection probability $\min_{u,v} \p_q \bra{ u\leftrightarrow v} \geq \tau=\tau(\eps)$. Thus, applying \Cref{lem:inmain:concentration_implies_uniqueness} yields that
\begin{equation}
\label{eq:inmain:concentration_implies_uniqueness:final} \begin{split}
	\p_q \bra{ \den{K_2} \geq \frac{\eps}{2} } &\leq \bra{1+\frac{4}{\eps^2 \tau}}\p_q \bra{ \abs{\den{K_1} - \alpha} \geq \frac{\eps}{2}} \\&\leq \bra{1+\frac{4}{\eps^2 \tau}} \cdot 2 \Delta \leq \frac{10 \Delta}{ \eps^2 \tau },
\end{split}\end{equation}
where we used the assumption $\eps/2 \geq \delta$ and \cref{eq:sandcastle_delta_def_reason} in the second inequality.
Applying  \Cref{lem:inmain:sandcastles:existence}, we deduce that there exists a vertex $u$ such that
\eeq{eq:inmain:sandcastles:final}{
	\p_p \bra{ K_u \text{ is a sandcastle} } \geq \beta \p_p \bra{ \den{K_2} \geq \beta } - \frac{40 \beta \Delta }{\eps^2 \tau} \geq \frac{\beta}{2} \p_p \bra{ \den{K_2} \geq \beta },
}
where we used the assumption \eqref{eq:sandcastle_contradiction_assumption} in the final inequality.
By vertex-transitivity, this holds for \emph{every} vertex $u\in V$. 
Fix a vertex $u\in V$ and let $\mathscr{S}_u$ be the event that $K_u(\omega_p)$ is a sandcastle. Since $\omega_q \leq \omega_p$, no vertex in $K_u(\omega_p)$ is connected to a vertex of $V\setminus K_u(\omega_p)$ in $\omega_q$, and using the fact that $\alpha - \delta \geq \eps/2$ (because $ \alpha \geq \eps$ and $\delta \leq \eps /2$), we have the inclusion of events
\begin{multline*}
\Bigl\{\den{K_1( \omega_q \cap K_u(\omega_p) )} < \eps/2\Bigr\} \cap \Bigl\{\den{K_1(\omega_q)} \geq \alpha - \delta\Bigr\} \\ \subseteq \Bigl\{\Bigl\|K_1\Bigl( \omega_q \setminus \overline{ K_u(\omega_p) } \Bigr) \Bigr\| \geq \alpha - \delta\Bigr\}.
\end{multline*}
Taking probabilities and using the definition of sandcastles, we deduce that
\begin{align}
	&\p_{q,p} \bra{ \den{ K_1( \omega_q \setminus \overline{ K_u(\omega_p) } )}  \geq \alpha - \delta \bigm| \mathscr{S}_u} \nonumber\\
	&\hspace{0.5cm} \geq \p_{q,p} \bra{ \den{ K_1(\omega_q \cap K_u(\omega_p)) } < \frac{\eps}{2} \bigm| \mathscr{S}_u} - \p_{q,p} \bra{ \den{K_1(\omega_q)} \leq \alpha - \delta \mid \mathscr{S}_u} \nonumber\\
	& \hspace{1cm} \geq \frac{1}{2} - \p_{q,p} \bra{ \den{K_1(\omega_q)} \leq \alpha - \delta \mid \mathscr{S}_u}.
\end{align}
Using \eqref{eq:inmain:concentration} and \eqref{eq:inmain:sandcastles:final}, we can bound the error term
\begin{align}
	\p_{q,p} \bra{ \den{K_1(\omega_q)} \leq \alpha - \delta \mid \mathscr{S}_u} &\leq \frac{ \p_q \bra{ \abs{ \den{K_1}-\alpha} \geq \delta } } { \p_p \bra{ K_u \text{ is a sandcastle}} } \\
	&\leq \frac{4 \Delta}{\beta \p_p(\|K_2\|\geq \beta)} \leq \frac{1}{4}
\end{align}
by the assumption that $\p_p \bra{ \den{K_2} \geq \beta } \geq 200 \Delta/(\eps^2 \tau \beta) \geq 16\Delta / \beta$,
so that
\begin{align}
\label{eq:sandcastle1}
	\p_{q,p} \bra{ \den{ K_1( \omega_q \backslash \overline{ K_u(\omega_p) } )}\geq \alpha - \delta \mid \mathscr{S}_u}  
	 \geq \frac{1}{4}.
\end{align}
Since the left hand side of \eqref{eq:sandcastle1} can be written as a weighted sum of conditional probabilities given that $K_u(\omega_p)$ is equal to a \emph{specific} sandcastle, there must exist a sandcastle $S$ such that
\begin{multline}
\label{eq:sandcastle2}
	\p_{q,p} \bra{ \den{ K_1\bigl( \omega_q \backslash \overline{ S } \bigr)}\geq \alpha - \delta \mid K_u(\omega_p)=S} \\ =
	\p_{q,p} \bra{ \den{ K_1\bigl( \omega_q \backslash \overline{ K_u(\omega_p) } \bigr)}\geq \alpha - \delta \mid K_u(\omega_p)=S}  \geq \frac{1}{4}.
\end{multline}
Since the event $\{K_u(\omega_p)=S\}$ depends only on the status of edges in $\overline {S}$, it is independent of the restriction of $\omega_q$ to $E \backslash \overline{S}$, and we deduce that
\begin{align}
	\p_{q} \bra{ \den{ K_1( \omega \backslash \overline{ S } )}  \geq \alpha - \delta} &= \p_{q,p} \bra{ \den{ K_1\bigl( \omega_q \backslash \overline{ S } \bigr)}\geq \alpha - \delta \mid K_u(\omega_p)=S} \geq \frac{1}{4}.
	\label{eq:sandcastle_contradiction_1}
\end{align}
On the other hand, using that $\Delta \leq 1/200$  and hence that \[2 \P_q(\|K_1\|\geq (\alpha-\delta)) \geq 2(1-2\Delta) \geq \alpha - \delta,\]  \cref{lem:localization} implies that
\begin{multline}
\p_{q} \bra{ \den{ K_1( \omega \backslash \overline{ S } )}  \geq \alpha - \delta} \leq \frac{2-(\alpha-\delta)^2}{2(\alpha-\delta) \P_q(\|K_1\|\geq \alpha-\delta)-(\alpha-\delta)^2}\\ \cdot \left[\p_q \bra{ \|K_1\| \notin \Big(\alpha-\delta,\alpha-\delta + \frac{(\alpha-\delta)^2}{2}\beta\Big) } + \p_q \bra{ \den{K_2} \geq \alpha-\delta}\right],
\end{multline}
and since $\alpha - \delta \geq \eps/2 \leq 1/2$ and $\beta \geq 16\eps^{-2} \delta$, and \[\P_q(\|K_1\|\geq \alpha-\delta)\geq 1-2\Delta \geq 99/100,\] it follows that
\begin{multline}
\p_{q} \bra{ \den{ K_1( \omega \backslash \overline{ S } )}  \geq \alpha - \delta} \leq \frac{8}{4\eps \P_q(\|K_1\|\geq \alpha-\delta)-\eps^2}\\ \cdot \left[\p_q \bra{ \bigl|\|K_1\| -\alpha\bigr| \geq \delta } + \p_q \bra{ \den{K_2} \geq \eps/2}\right].
\end{multline}
Applying \eqref{eq:sandcastle_delta_def_reason} and \eqref{eq:inmain:concentration_implies_uniqueness:final} to control the two probabilities appearing here we obtain that
\begin{equation}
\label{eq:sandcastle_contradiction_2}
\p_{q} \bra{ \den{ K_1( \omega \backslash \overline{ S } )}  \geq \alpha - \delta} \leq \frac{8}{4\eps (1-2\Delta-\eps/4)} \cdot \left[2\Delta + \frac{10\Delta}{\eps^2\tau}\right] \leq \frac{48\Delta}{\eps^3 \tau}
\end{equation}
where we used that $\Delta \leq 1/200 \leq 1/8$ in the final inequality. The two estimates \eqref{eq:sandcastle_contradiction_1} and \eqref{eq:sandcastle_contradiction_2} contradict each other since $200\Delta/\eps^3 \tau \leq \beta_0 \leq 1$. \qedhere

\end{proof}

\subsection{Subalgebraic degree graphs have the sharp density property}
\label{subsec:subpolynomial_sharpness}

In this section we prove \cref{prop:sharp_giant_subpolynomial}. As stated above, this proposition is a straightforward application of standard sharp-threshold theorems. The details of the implementation of the proof are somewhat technical but do not contain any significant new ideas.
We will apply the following straightforward consequence of the results of Talagrand \cite{MR1303654}. We refer the reader to \cite[Chapter 4]{MR3751350} and \cite{MR3443800} for general background on sharp threshold theorems.

\begin{thm}
\label{thm:KKL_subpolynomial}
Let $E$ be a finite set and let $A\subseteq \{0,1\}^E$ be an increasing event. Let $\Gamma$ be a group acting on $E$ and for each $e\in E$ let $\Gamma e=\{\gamma e : \gamma \in \Gamma\}$ be the orbit of $e$ under $\Gamma$. There exists a universal constant $c>0$ such that if $A$ is invariant under the action of $\Gamma$ on $\{0,1\}^E$ then
\[
\frac{\dif}{\dif p}\p_p(A) \geq c \left[p(1-p) \log \frac{2}{p(1-p)}\right]^{-1}\p_p(A)(1-\p_p(A)) \log \left(2\min_{e\in E}|\Gamma e|\right)
\]
for every $p\in (0,1)$.
\end{thm}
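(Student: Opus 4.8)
The plan is to deduce the estimate from Russo's formula (Margulis--Russo) together with Talagrand's sharp-threshold inequality \cite{MR1303654}, using the $\Gamma$-invariance of $A$ only through the resulting symmetry of the influences. For an increasing event $A$ and an edge $e$, write $\mathrm{Inf}_e^p(A):=\p_p(e\text{ is pivotal for }A)$ and $D:=\frac{d}{dp}\p_p(A)$; Russo's formula gives $D=\sum_{e\in E}\mathrm{Inf}_e^p(A)$. The cases $p\in\{0,1\}$ are trivial since $\p_p(A)\in\{0,1\}$ there, so assume $p\in(0,1)$. Talagrand's inequality, applied to the product measure $\p_p$, then yields a universal constant $c_1>0$ with
\[
\sum_{e\in E}\frac{\mathrm{Inf}_e^p(A)}{\log\bigl(2/\mathrm{Inf}_e^p(A)\bigr)}\;\ge\;c_1\,W,\qquad\text{where}\qquad W:=\frac{\p_p(A)\bigl(1-\p_p(A)\bigr)}{p(1-p)\,\log\bigl(2/(p(1-p))\bigr)}.
\]

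Next I would use symmetry. Since $\Gamma$ permutes coordinates and preserves both $\p_p$ and $A$, the map $e\mapsto\mathrm{Inf}_e^p(A)$ is constant on each $\Gamma$-orbit. Writing $\delta:=\max_{e\in E}\mathrm{Inf}_e^p(A)$ and $m:=\min_{e\in E}|\Gamma e|$, summing the influences over the orbit that attains $\delta$ gives $D\ge m\delta$, i.e.\ $\delta\le D/m$. On the other hand $\mathrm{Inf}_e^p(A)\le\delta$ forces $\log(2/\mathrm{Inf}_e^p(A))\ge\log(2/\delta)>0$, so the left side of Talagrand's inequality is at most $D/\log(2/\delta)$; combining this with $\delta\le D/m$ gives the self-improving inequality
\[
D\;\ge\;c_1 W\log(2/\delta)\;\ge\;c_1 W\,\log\!\Bigl(\frac{2m}{D}\Bigr).
\]

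It then remains to unwind this into the clean bound $D\ge cW\log(2m)$. If $D\le\sqrt{2m}$ then $\log(2m/D)\ge\tfrac12\log(2m)$, and we are done with $c=c_1/2$. The case $D>\sqrt{2m}$ is the only delicate point, and the main obstacle of the proof: here the orbit bound alone only gives $D\gtrsim\sqrt m$, which is weaker than $W\log(2m)$ when $p$ is very close to $0$ or $1$, so $D$ being large is not by itself enough. To close this case I would feed back the elementary pivotality bound $\mathrm{Inf}_e^p(A)\le\min\bigl(\p_p(A)/p,\ (1-\p_p(A))/(1-p)\bigr)$, which gives $\delta^2\le\p_p(A)(1-\p_p(A))/(p(1-p))$ and hence relates $\delta$ to $W$; combined with $D\ge m\delta$, $\delta\le D/m$, and the displayed self-improving inequality, this forces $W$ to be small enough in the problematic regime for the target bound to hold. (Equivalently, the whole statement is an instance of the Friedgut--Kalai sharp-threshold theorem for monotone group-invariant events, whose proof consists of exactly these ingredients, and could be quoted directly.) Tracking the constants through the dichotomy then produces a universal $c>0$, as required.
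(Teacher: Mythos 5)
Your proposal follows the same skeleton as the paper's argument: Russo's formula plus Talagrand's inequality plus orbit-symmetry, combined by playing the orbit bound $D\geq m\delta$ against the Talagrand bound. However, there is a genuine gap, which starts with the form of Talagrand's inequality you invoke. You write the denominator as $\log\bigl(2/\mathrm{Inf}_e^p(A)\bigr)$, but the inequality that is actually needed (and that the paper uses) has $\log\bigl(1/(p(1-p)\mathrm{Inf}_e^p(A))\bigr)$; the factor $p(1-p)$ inside the logarithm is not cosmetic, since $\log\frac{2}{p(1-p)}\geq\log 8$ is bounded below while $\p_p(A)(1-\p_p(A))\leq 1/4$ is bounded above, and it is precisely this mismatch that makes the Lambert-$W$ argument of the Lambert-$W$ function automatically $\geq 2m$ up to constants. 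With your weakened denominator the self-improving inequality only reads $D\geq c_1\mathcal{W}\log(2/\delta)\geq c_1\mathcal{W}\log 2$, which has no $\log(2m)$ and cannot produce one on its own.

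This surfaces concretely in your Case $D>\sqrt{2m}$, which is also where your fallback argument breaks down. You propose to close the case using $\delta^2\leq\p_p(A)(1-\p_p(A))/(p(1-p))=\mathcal{W}\ell$ (writing $\ell=\log\frac{2}{p(1-p)}$), claiming this "forces $W$ to be small enough". But $\delta^2\leq\mathcal{W}\ell$ is a \emph{lower} bound on $\mathcal{W}$, not an upper bound, so it does the opposite of what you need. Indeed, one can pick $\delta$ close to $1$, $\mathcal{W}$ arbitrarily large, and $D$ equal to $\max\{m\delta,c_1\mathcal{W}\log 2\}$; these satisfy $D\geq m\delta$, $D\geq c_1\mathcal{W}\log(2/\delta)$, $\delta^2\leq\mathcal{W}\ell$, and $D>\sqrt{2m}$, yet $D/\bigl(\mathcal{W}\log(2m)\bigr)$ tends to $0$ as $m\to\infty$, so no universal $c$ can emerge from these ingredients alone. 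The paper avoids the dichotomy entirely: from Russo and the (correctly normalized) Talagrand inequality it obtains $D\geq\frac{1}{p(1-p)}\max\{|\Gamma e|\,x,\ b\log(1/x)\}$ with $x=p(1-p)I_p(A,e)$ and $b=c\,\p_p(A)(1-\p_p(A))/\ell$, minimizes the right-hand side in $x$ to get $D\geq c\mathcal{W}\,W(a/b)$ in terms of the Lambert $W$-function, and then uses that $a/b=|\Gamma e|\ell/(c\,\p_p(A)(1-\p_p(A)))\geq 2m$ up to a universal constant together with $W(z)\geq\tfrac12\log z$ for $z\geq 1$ — the $\ell$ in the numerator here is exactly the term your version of Talagrand lost. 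Either quoting the Friedgut--Kalai sharp-threshold theorem directly, or reinstating the $p(1-p)$ inside the logarithm in Talagrand and then doing the single clean optimization, would repair the argument.
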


\begin{proof}[Proof of \cref{thm:KKL_subpolynomial}]
The \textbf{influence} of an edge $e$ with respect to $A$ under $\p_p$ is defined to be
\[
I_p(A,e):=\p_p( \omega \cup \{e\} \in A,\; \omega \setminus \{e\} \notin A).
\]
Russo's formula states that if $A$ is an increasing event then
\begin{equation}\label{eq:Russo}
\frac{\dif}{\dif p}\p_p(A) = \sum_{e\in E} I_p(A,e)
\end{equation}
for every $p\in [0,1]$. It is a theorem of Talagrand \cite{MR1303654} that there exists a universal constant $0<c\leq 1$ such that if $A$ is increasing then
\begin{equation}
\label{eq:Talagrand}
p(1-p) \log \left(\frac{2}{p(1-p)}\right)\sum_{e\in E} \frac{I_p(A,e)}{\log \frac{1}{p(1-p)I_p(A,e)}} \geq c\cdot \p_p(A)(1-\p_p(A))
\end{equation}
and hence that
\begin{multline}
\label{eq:Talagrand2}
\sum_{e\in E} I_p(A,e) \geq c\cdot \p_p(A)(1-\p_p(A))\\ \cdot \left[p(1-p) \log \frac{2}{p(1-p)}\right]^{-1}  \log \frac{1}{p(1-p) \max_e I_p(A,e)}.
\end{multline}
(Note that Talagrand states his inequality in terms of \emph{open} pivotals, so that his expression differs from ours by some factors of $1/p$.)
Intuitively, this inequality implies that any event that does not depend too strongly on the status of any particular edge must have a sharp threshold, i.e., must have probability changing rapidly from near $0$ to near $1$ over a short interval.
 Letting $e$ maximize the influence, we have by \eqref{eq:Russo} and \eqref{eq:Talagrand2} that 
\begin{equation} \begin{split}
\frac{\dif}{ \dif p}\p_p(A) 
&\geq \frac{1}{p(1-p)} \cdot\max\bigg\{|\Gamma e| p(1-p)I_p(A,e),\\ &\hspace{1cm} \, c\cdot \p_p(A)(1-\p_p(A))\cdot \left[ \log \frac{2}{p(1-p)}\right]^{-1} \log \frac{1}{p(1-p)I_p(A,e)} \bigg\}.
\end{split}\end{equation}
Since the function $f(x)=\max \{ax,b\log 1/x\}$ attains its minimum when $\frac{1}{x}\log\frac{1}{x}=\frac{a}{b}$, it follows that
\begin{multline}
\frac{\dif}{\dif p}\p_p(A) \geq \frac{c}{p(1-p)}\cdot \p_p(A)(1-\p_p(A)) \\\cdot \left[ \log \frac{2}{p(1-p)}\right]^{-1} W\!\left(\frac{|\Gamma e| \log \frac{2}{p(1-p)}}{c\cdot \p_p(A)(1-\p_p(A))} \right)
\end{multline}
where $W$ is the Lambert W-function (i.e., the inverse function of $x e^x$). The claim follows since $\p_p(A)(1-\p_p(A))\leq 1$ and $W$ is increasing and satisfies $W(x) \geq \frac{1}{2} \log x$ for every $x\geq 1$.
\end{proof}

We now apply \cref{thm:KKL_subpolynomial} to prove \cref{prop:sharp_giant_subpolynomial}.

\begin{proof}[Proof of \cref{prop:sharp_giant_subpolynomial}]
Let $G=(V,E)$ be a finite vertex-transitive graph of degree $d \geq 2$. 
It follows from \cref{lem:pc_lower} that $p_c(\alpha,\delta) \geq 1/2d$ for every $\alpha,\delta \geq \alpha_0:= (2/|V|)^{1/3}$.
It suffices to show that there exists a universal constant $C \geq 1$ such that
\begin{equation}
\label{eq:subpolynomial_claim}
\text{If } \alpha,\delta \geq \alpha_0 \quad \text{ and } \quad \frac{p_c(\alpha,1-\delta)}{p_c(\alpha,\delta)} \geq e^\delta
\qquad \text{ then } \qquad
\delta \leq \sqrt\frac{C \log d}{\log |V|}.
\end{equation}
Fix $\alpha_0\leq \alpha \leq 1$ and $\alpha_0\leq\delta \leq 1/2$ and write $p_0=p_c(\alpha,\delta)$ and $p_1=p_c(\alpha,1-\delta)$. 
If $p_0\leq p \leq p_1$ then $\p_p(\|K_1\|\geq \alpha) (1-\p_p(\|K_1\|\geq \alpha)) \geq \delta(1-\delta) \geq \frac{1}{2}\delta$ and it follows from \cref{thm:KKL_subpolynomial} that there exists a universal constant $c>0$ such that
\begin{align}
1 \geq 1-2\delta&=\int_{p_0}^{p_1} \frac{\dif}{\dif p}\p_p(\|K_1\|\geq \alpha) \dif p \\&\geq \frac{c\delta}{2} \log |V| \int_{p_0}^{p_1}  \left[p(1-p) \log \frac{2}{p(1-p)}\right]^{-1} \dif p,
\end{align}
where we used that every edge has at least $|V|/2$ edges in its $\Aut(G)$ orbit on any vertex-transitive graph.
To estimate this integral we first use the substitution $p=\phi(x):=e^x/(e^x+1)$, which satisfies $\dif p/\dif x= e^x/(e^x+1)^2=p(1-p)$, to write
\begin{equation} \begin{split}
\int_{p_0}^{p_1}\left[p(1-p) \log \frac{2}{p(1-p)}\right]^{-1} \dif p &= \int_{x_0}^{x_1} \left[\log \frac{2(e^x+1)^2}{e^x} \right]^{-1} \dif x \\&\geq \int_{x_0}^{x_1} \frac{1}{|x|+\log 8} \dif x, 
\end{split}\end{equation}
where we write $x_i=\phi^{-1}(p_i)=\log p_i/(1-p_i)$ and use the elementary bound $(e^x+1)^2/e^x =  e^x + 2 + e^{-x} \leq 4e^{|x|}$ in the final inequality.
The logarithmic derivative of $\phi(x)$ is $1/(e^x+1)$ so that if $p_1\geq e^\delta p_0$ then we have that
\[
\int_{x_0}^{x_1} \frac{1}{e^x+1} \dif x \geq \delta \qquad \text{ and hence that } \qquad x_1-x_0 \geq (e^{x_0}+1)\delta.
\]
It follows that if $p_1 \geq e^\delta p_0$ then
\[
\int_{x_0}^{x_0+(e^{x_0}+1)\delta} \frac{1}{|x|+\log 8} \dif x \leq \frac{2}{c\delta \log |V|},
\]
from which the claim may easily be proven via case analysis according to whether $x_0 \leq 0$ or $x_0>0$, noting that $x_0 \geq \phi^{-1}(1/2d) \geq -\log 2d$ since $p_0 \geq 1/2d$. In the first case we use that $|x|+\log 8 \leq |x_0|+\log 8 + \delta \leq |x_0|+3$ for every $x_0 \leq x \leq x_0 + \delta$ to deduce that
\begin{align}
\label{eq:x0negative}
\frac{\delta}{3+\log 2d} \leq \int_{x_0}^{x_0+\delta} \frac{\dif x}{|x_0|+3} &\leq \frac{2 }{c \delta \log |V|} &&\text{ when $x_0\leq 0$},
\intertext{while in the case $x_0>0$ we lower bound the integral by the minimum of the integrand times the length of the interval to obtain that}
\frac{2}{2+\log 8} \delta \leq \frac{(e^{x_0} + 1)\delta}{x_0 + (e^{x_0} + 1)\delta + \log 8 }  &\leq \frac{2}{c\delta \log |V|} &&\text{ when $x_0>0$.}
\label{eq:x0positive}
\end{align}
Putting together \eqref{eq:x0negative} and \eqref{eq:x0positive} completes the proof.
\end{proof}

\begin{proof}[Proof of \cref{thm:quantitative-sup}]
The claim follows immediately from \Cref{thm:2point_main,prop:sharp_giant_subpolynomial,thm:main_single_graph}.
\end{proof}

\section{Non-molecular graphs have the sharp density property} \label{section:the_sharp-giant_property}

In this section we complete the proofs of our main theorems, \cref{thm:main_sparse} and \cref{thm:main}. The most important remaining step is to deduce \cref{thm:main_sparse} and the implication (i) $\Rightarrow $ (ii) of \cref{thm:main} from \Cref{cor:main_sequence} by proving the following proposition.

\begin{prop} \label{prop:molecules:main}
	Let $\mathcal H \subseteq \mathcal F$ be an infinite set. If $\mathcal H$ does not have the sharp density property, then $\mathcal H$ contains a molecular subset.
\end{prop}

\begin{remark}
It follows from \cref{thm:main} and \Cref{cor:main_sequence} that the converse of this proposition also holds, that is, that molecular sequences do not have the sharp density property.
\end{remark}

Recall from \cref{section:deducing_uniqueness} that a sequence of graphs is said to have the sharp density property if the emergence of a giant cluster of a given density always has a sharp threshold. As we saw in \cref{subsec:subpolynomial_sharpness}, it is an immediate consequence of standard sharp threshold results \cite{MR1371123,MR1194785,MR1303654} that this property holds whenever the graphs in question have bounded or subalgebraic vertex degrees. Indeed, these results imply that \emph{any} increasing event depending in a sufficiently symmetric way on $m$ i.i.d.\ Bernoulli-$p$ random variables has a sharp threshold \emph{provided that this threshold occurs around a value of $p$ that is subalgebraically small in $m$;} \cref{lem:pc_lower} implies that the latter condition is satisfied for the event $\{\|K_1\|\geq \alpha\}$ whenever $G_n$ has subalgebraic degrees. Unfortunately it is not true in general that every symmetric increasing event has a sharp threshold without this condition on the location of the threshold. For example, the event that the Erd\H{o}s--R\'enyi graph contains a triangle has a coarse threshold on the scale $p=\Theta(n^{-1})$ and the event that the Erd\H{o}s--R\'enyi graph contains a tetrahedron has a coarse threshold on the scale $p=\Theta(n^{-2/3})$ \cite[Chapter 10.1]{MR3524748}. Thus, to prove \cref{prop:molecules:main} we will need to use specific properties of the event $\{\|K_1\|\geq \alpha\}$ on non-molecular graphs.

\medskip

Our proof will apply a theorem first established by Bourgain \cite{bourgain} and sharpened by Hatami \cite{hatami}, which, roughly speaking, states that any event that does \emph{not} have a sharp threshold must be heavily influenced by the status of a small number of edges. Throughout this section, the prime in e.g.\ $\p_p'$ will always refer to a $p$-derivative. We say that an event $\mathcal A$ is \emph{non-trivial} if $\mathcal A \not= \emptyset $ and $\mathcal A^c \not=\emptyset$. (The following theorem actually only requires that $\mathcal A \not= \emptyset$.)

\begin{thm} [Hatami 2012, Corollary 2.10] \label{thm:hatami}
	Let $G = (V,E)$ be a finite graph, and let $\mathcal A \subseteq \{ 0,1 \}^E$ be a non-trivial increasing event. For every $p \in (0,1/2]$ and $\epsilon > 0$, there is a set of edges $F \subseteq E$ such that $\p_p \bra{ \mathcal A \mid F \subseteq \omega } \geq 1 - \eps$ and
	\[
		\abs{F} \leq \exp\bra{ 10^{13} \left\lceil p \cdot \p_p' \bra{ \mathcal A } \right\rceil^2 \p_p \bra{ \mathcal A}^{-2} \epsilon^{-2}}.
	\]
	In particular, for each $\eps>0$ there exists a constant $C(\eps)<\infty$ such that if $\p_p \bra{ \mathcal A } \geq \eps$ and $p \cdot \p_p'(\cA) \leq \eps^{-1}$ then 
	there is a set of edges $F \subseteq E$ such that $\p_p \bra{ \mathcal A \mid F \subseteq \omega } \geq 1 - \eps$ and $|F|\leq C(\eps)$.
\end{thm}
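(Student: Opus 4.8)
My plan is to obtain the first displayed estimate as a direct quotation of Hatami's theorem \cite{hatami}, which is a quantitative sharpening of the earlier qualitative theorem of Bourgain \cite{bourgain}, and then to read off the ``in particular'' clause by a one-line substitution into the explicit bound. For orientation, let me recall the philosophy behind the cited result: writing Russo's formula in the form $\p_p'(\cA) = \sum_{e} I_p(\cA,e)$, an event with a \emph{coarse} threshold at $p$ is precisely one for which the total influence $p\cdot \p_p'(\cA)$ is bounded, and the Bourgain--Hatami theorem formalises the heuristic that such an event must be ``approximately local'', i.e.\ essentially determined by the values of a bounded number of coordinates. The proof expands the indicator of $\cA$ in the $p$-biased Fourier basis and uses the Bonami--Beckner hypercontractive inequality on the $p$-biased hypercube to argue that, when the total influence is small, the Fourier weight concentrates on a bounded family of ``heavy'' coordinates; conditioning on these being open then drives the conditional probability of $\cA$ up to within $\eps$ of $1$. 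Hatami's contribution over Bourgain's is to make the dependence of $|F|$ on the relevant parameters explicit and of the single-exponential form recorded in the statement. I would simply cite this; re-proving it is well outside our scope.

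Granting the first part, the ``in particular'' clause is immediate. We may assume $0 < \eps < 1$, since for $\eps \geq 1$ the hypothesis $\p_p(\cA) \geq \eps$ forces $\p_p(\cA) = 1$ (so $F = \emptyset$ works) or is vacuous. Suppose then that $\p_p(\cA) \geq \eps$ and $p \cdot \p_p'(\cA) \leq \eps^{-1}$, and apply the first part with this same $\eps$. Using $\lceil x \rceil \leq x+1$ together with $p\cdot \p_p'(\cA) \geq 0$ and $\eps < 1$ gives $\lceil p\cdot \p_p'(\cA) \rceil \leq \eps^{-1} + 1 \leq 2\eps^{-1}$, while $\p_p(\cA) \geq \eps$ gives $\p_p(\cA)^{-2} \leq \eps^{-2}$. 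Substituting these into the exponent of the first bound yields
\[
|F| \;\leq\; \exp\!\bra{ 10^{13} \cdot (2\eps^{-1})^2 \cdot \eps^{-2} \cdot \eps^{-2} } \;=\; \exp\!\bra{ 4 \cdot 10^{13}\, \eps^{-6} } \;=:\; C(\eps) \;<\; \infty,
\]
and the conclusion $\p_p(\cA \mid F \subseteq \omega) \geq 1-\eps$ is retained verbatim, which is exactly the claim.

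The main obstacle here is entirely ``imported'': it is the proof of the Bourgain--Hatami sharp-threshold theorem itself, which rests on $p$-biased Fourier analysis and hypercontractivity and which we do not reproduce. From the point of view of this excerpt, the derivation above involves no real difficulty beyond the trivial bookkeeping with the ceiling function and the degenerate range $\eps \geq 1$; the value of $C(\eps)$ obtained is far from optimal but is all we shall need.
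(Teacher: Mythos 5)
Your proposal matches the paper's treatment exactly: the paper also states the main estimate as a direct import from Hatami's sharpening of Bourgain's theorem without reproving it, and leaves the ``in particular'' clause as the same one-line substitution you carry out (with the ceiling bound $\lceil p\cdot\p_p'(\cA)\rceil \leq 2\eps^{-1}$ and $\p_p(\cA)^{-2}\leq\eps^{-2}$ giving $C(\eps)=\exp(4\cdot 10^{13}\eps^{-6})$). Nothing to add; this is correct and is the same approach.
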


The relevance of this theorem to sharp-threshold phenomena is made clear by the following elementary lemma, which shows that when $\p_p(\cA)$ does not have a sharp threshold there must be a good supply of parameters where $\p_p(A)$ is not close to $0$ or $1$ and $p \cdot  \p_p'(\cA)$ is not large. Note that if $\cA \subseteq \{0,1\}^E$ is a non-trivial increasing event then its probability $\p_p(\cA)$ is a strictly increasing function of $p\in [0,1]$ by \cite[Theorem 2.38]{MR1707339} and hence defines an invertible increasing homeomorphism $[0,1]\to[0,1]$.

\begin{lem} \label{lem:molecules:finding_parameters}
	Let $G = (V,E)$ be a finite graph, and let $\mathcal A \subseteq \{ 0,1 \}^E$ be a non-trivial increasing event. Define $f:[0,1] \to [0,1]$ by $f(p) := \p_p \bra{ \mathcal A }$.  If $f^{-1} (1 - \delta) \geq (1 + \eps) f^{-1} (\delta)$ for some $\eps \in (0,1]$ and $0<\delta\leq 1/2$ then 
	\[
		\mathcal L \bra{\cubra{ p \in f^{-1} [\delta,1-\delta] : p f'(p) \leq \frac{4}{\eps}} } \geq \frac{1}{2}\mathcal L\left(f^{-1} [\delta,1-\delta]\right),
	\]
	where $\mathcal L$ denotes the Lebesgue measure on $[0,1]$.
\end{lem}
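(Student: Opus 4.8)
The plan is to prove this by a single first-moment (Markov-type) estimate applied to the $p$-derivative $f'$, which is a polynomial in $p$ (since $E$ is finite) and is nonnegative on $[0,1]$ (since $f$ is increasing). Write $p_0 := f^{-1}(\delta)$ and $p_1 := f^{-1}(1-\delta)$. Since $\cA$ is non-trivial and increasing, $f$ is a strictly increasing homeomorphism of $[0,1]$ with $f(0)=0$, so $0 < p_0 < p_1 \le 1$, the hypothesis is exactly $p_1 \ge (1+\eps)p_0$, and $f^{-1}[\delta,1-\delta]$ equals the interval $[p_0,p_1]$. Setting $B := \{p \in [p_0,p_1] : p f'(p) > 4/\eps\}$ for the ``bad'' set, it then suffices to prove $\mathcal L(B) \le \tfrac12(p_1-p_0)$, since this gives $\mathcal L([p_0,p_1]\setminus B) \ge \tfrac12(p_1-p_0) = \tfrac12\mathcal L\bigl(f^{-1}[\delta,1-\delta]\bigr)$.

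First I would bound $\mathcal L(B)$ from above. On $B$ we have $f'(p) > 4/(\eps p) \ge 4/(\eps p_1)$, so by the fundamental theorem of calculus
\[
	\frac{4}{\eps p_1}\,\mathcal L(B) \;\le\; \int_B f'(p)\,\dif p \;\le\; \int_{p_0}^{p_1} f'(p)\,\dif p \;=\; f(p_1) - f(p_0) \;=\; 1 - 2\delta \;\le\; 1 ,
\]
which rearranges to $\mathcal L(B) \le \eps p_1 / 4$. Second, I would compare this against the width of the window: from $p_1 \ge (1+\eps)p_0$ we get $p_0 \le p_1/(1+\eps)$ and hence $p_1 - p_0 \ge \tfrac{\eps}{1+\eps}\,p_1$, so that $\tfrac12(p_1-p_0) \ge \eps p_1 / \bigl(2(1+\eps)\bigr) \ge \eps p_1 / 4$ provided $\eps \le 1$. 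Combining the two displays yields $\mathcal L(B) \le \tfrac12(p_1-p_0)$, completing the argument.

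I do not expect a real obstacle here: the argument is entirely soft, and the only point demanding care is the accounting of constants — the factor $4$ in the threshold $pf'(p) \le 4/\eps$ is precisely what absorbs the factor $2(1+\eps)$ lost when converting the multiplicative gap $p_1 \ge (1+\eps)p_0$ into the additive gap $p_1 - p_0 \ge \tfrac{\eps}{1+\eps}p_1$ over the range $\eps \le 1$. This restriction on $\eps$ is essential rather than cosmetic (if $\cA$ is the event that at least one of $N$ fixed edges is open and $\delta$ is small, then $p_1/p_0$ is of order $\delta^{-1}\log\delta^{-1}$ while $\{p \in [p_0,p_1] : pf'(p) \le 4/\eps\}$ is essentially empty), so I would state at the start of the proof that we may assume $\eps \le 1$; this costs nothing in the application, since the lemma is only ever invoked (in the proof of \cref{prop:molecules:main}) with $\eps$ shrunk to at most $1$ and \cref{thm:hatami} only requires $p\,\p_p'(\cA)$ to be bounded above by a constant.
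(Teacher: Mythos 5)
Your proof is correct and takes essentially the same route as the paper's: both are first-moment (Markov-type) arguments bounding $pf'(p)$ over the window $I=[p_0,p_1]$ by replacing $p$ with $p_1$ and using $\int_I f' = 1-2\delta \le 1$ together with the hypothesis $p_1\ge (1+\eps)p_0$ rewritten as a lower bound on $p_1-p_0$ in terms of $p_1$; your integration of $f'$ over the bad set $B$ is just Markov's inequality in a slightly different guise. Your observation that one must assume $\eps\le 1$ is a genuinely good catch: the paper's proof also relies on it implicitly, in the step passing from $\bigl(1+\tfrac{1}{\eps}\bigr)\mathcal L(I)$ to $\tfrac{2}{\eps}\mathcal L(I)$, which requires $1+\tfrac{1}{\eps}\le\tfrac{2}{\eps}$, i.e.\ $\eps\le 1$, and without this restriction the lemma as literally stated can fail (take $f(p)=p$, $\delta$ small, and $\eps$ near $(1-2\delta)/\delta$). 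This is harmless in practice since the lemma is only ever invoked with $\eps=\delta\le\tfrac12$, as you note, but a clean statement would make the restriction explicit.
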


\begin{proof}[Proof of \cref{lem:molecules:finding_parameters}]
Let $I:=f^{-1}[\delta,1-\delta]$. The function $f$ is differentiable, as it is a polynomial, and satisfies
\[
	\int_I p f'(p) \;\mathrm{d}p \leq f^{-1} (1-\delta) \int_I f'(p) \;\mathrm{d} p = (1-2\delta)f^{-1} (1-\delta) \leq f^{-1}(1-\delta).
\]
On the other hand, rearranging our hypothesis $f^{-1} (1 - \delta) \geq (1 + \eps) f^{-1} (\delta)$ gives $f^{-1}(1-\delta) \leq \bra{1 + \frac{1}{\eps}} \sqbra{ f^{-1}(1-\delta) - f^{-1}(\delta) }\leq \frac{2}{\eps}\cL(I)$ and hence that
\[
	\frac{1}{\mathcal L(I)} \int_I p f'(p) \;\mathrm{d}p \leq \frac{2}{\eps}.
\]
The result follows by applying Markov's inequality to the normalised Lebesgue measure on $I$.
\end{proof}

When the edges in the set $F$ given by \cref{thm:hatami} are open, the event $\mathcal A$ occurs with conditional probability at least $1-\eps$. 
Since $\p_p \bra{ \mathcal A \mid F \subseteq \omega} \geq \p_p \bra{ \mathcal A }$ by Harris's inequality, this result is only interesting when $\p_p \bra{ \mathcal A }$ is significantly smaller than $1-\eps$. In this case, the state of $F$ plays a decisive role in determining whether $\mathcal A$ occurs in the sense that the event $\{\omega \notin \mathcal A$ but $\omega \cup F \in \mathcal A \}$ occurs with good probability. This motivates our definition of a subgraph $H$ that \emph{activates} an event $\cA$, a generalisation of being a closed pivotal edge.

\begin{defn}
	Let $G = (V,E)$ be a finite graph, let $H \subseteq G$ be a subgraph, and let $\mathcal A \subseteq \{ 0,1 \}^E$ be an increasing event. We say $H$ \emph{activates} the event $\mathcal A$ in a configuration $\omega$ if $\omega \not\in \mathcal A$ but $\omega \cup H \in \mathcal A$. For every density $\alpha$, we simply say \emph{$H$ activates $\alpha$} to mean $H$ activates the event $\{ \den{K_1} \geq \alpha \}$, and we label this event $\act_\alpha(H)$.
\end{defn}

\begin{cor} \label{cor:molecules:get_subgraph}
	For every $\delta >0$ there exists $\eps > 0$ such that if $G = (V,E)$ is a finite, simple, vertex-transitive graph  and $\alpha, p \in (0,1]$ are such that $\p_p \bra{ \den{K_1} \geq \alpha } \in [\delta,1-\delta]$ and $p \cdot \p_p' \bra{ \den{K_1} \geq \alpha }  \leq \delta^{-1}$ then there exists a subgraph $H$ of $G$ such that $\abs{E(H)} \leq \eps^{-1}$ and $\p_p \bra{ \act_\alpha(H) } \geq \eps$.
\end{cor}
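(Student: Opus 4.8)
The plan is to read the subgraph $H$ directly off Hatami's theorem (\cref{thm:hatami}), applied to the increasing event $\cA := \{\den{K_1} \ge \alpha\}$, which is non-trivial because $\p_p(\cA) \in [\delta,1-\delta] \subseteq (0,1)$. The key point is to invoke the ``in particular'' clause of \cref{thm:hatami} not with error parameter $\delta$ but with the strictly smaller value $\eps_0 := \delta/2$: since $\p_p(\cA) \ge \delta \ge \eps_0$ and $p \cdot \p_p'(\cA) \le \delta^{-1} \le \eps_0^{-1}$, it produces a set of edges $F \subseteq E$ with $|F| \le C(\eps_0) = C(\delta/2)$ and $\p_p(\cA \mid F \subseteq \omega) \ge 1 - \delta/2$. (\cref{thm:hatami} is stated for $p \le 1/2$; if the parameter supplied to the corollary exceeds $1/2$ one should either check that this restriction can be removed for the form of the estimate used here, whose exponent involves $p\cdot\p_p'(\cA)$, or verify that the applications only occur at parameters $p \le 1/2$.)

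Now take $H$ to be the subgraph of $G$ with edge set $F$, so $|E(H)| = |F| \le C(\delta/2)$. Because $\omega \cup H$ has all edges of $F$ open no matter what $\omega$ is, the configuration $\omega \cup H$ depends (as a function of $\omega$) only on $\omega|_{E \setminus F}$, and its law under $\p_p$ is precisely the conditional law of $\omega$ given $\{F \subseteq \omega\}$; hence
\[
\p_p(\omega \cup H \in \cA) = \p_p(\cA \mid F \subseteq \omega) \ge 1 - \frac{\delta}{2}.
\]
On the other hand $\cA$ is increasing and $\omega \le \omega \cup H$, so $\cA \subseteq \{\omega : \omega \cup H \in \cA\}$; consequently $\{\omega \cup H \in \cA\}$ is the disjoint union of $\cA$ and $\act_\alpha(H) = \{\omega \notin \cA,\ \omega \cup H \in \cA\}$, and subtracting probabilities gives
\[
\p_p(\act_\alpha(H)) = \p_p(\omega \cup H \in \cA) - \p_p(\cA) \ge \left(1 - \frac{\delta}{2}\right) - (1-\delta) = \frac{\delta}{2}.
\]
Taking $\eps := \min\{\delta/2,\ C(\delta/2)^{-1}\}$ then yields both $|E(H)| \le \eps^{-1}$ and $\p_p(\act_\alpha(H)) \ge \eps$, as required. (Simplicity and vertex-transitivity play no role here and are carried along only to match the setting of the surrounding section.)

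I do not expect a serious obstacle: once \cref{thm:hatami} is available the corollary is essentially bookkeeping. The one point that genuinely matters is that Hatami's theorem is invoked with error parameter $\delta/2$ rather than $\delta$ — using $\delta$ would give only $\p_p(\act_\alpha(H)) \ge (1-\delta)-(1-\delta) = 0$, reflecting that the edge set $F$ would be guaranteed to \emph{certify} $\cA$ but not necessarily to be able to \emph{create} it. Forcing a gap between the lower bound $1-\eps_0$ on $\p_p(\cA \mid F \subseteq \omega)$ and the upper bound $1-\delta$ on $\p_p(\cA)$ is exactly what makes the activation probability bounded below by a positive constant depending only on $\delta$. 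The only genuinely delicate issue is the hypothesis $p \le 1/2$ in the cited form of Hatami's theorem, which must be addressed one way or another as indicated above.
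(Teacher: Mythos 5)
Your argument is correct for $p \le 1/2$, and your observation about invoking Hatami's theorem with error parameter $\delta/2$ rather than $\delta$ is precisely the (small) point that makes the bookkeeping go through; the paper simply calls this case ``trivial.'' The genuine gap is exactly the one you flagged: you have not handled $p > 1/2$, and neither of the two fixes you float is what the paper uses. Checking whether Hatami's bound extends beyond $p \le 1/2$ is not promising because the exponent involves $p\cdot \p_p'(\cA)$ rather than a symmetric normalization like $p(1-p)\cdot \p_p'(\cA)$, and verifying that later applications only occur at $p \le 1/2$ is both false and undesirable (in \cref{prop:molecules:main} the corollary is applied at parameters $p_n \in Q$ that are only known to be below $p_c(\beta,1-\delta)$, which can certainly exceed $1/2$).

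The paper's resolution of the $p>1/2$ case is different in kind: instead of trying to use Hatami there, it shows the hypotheses \emph{force the graph to be small}. By \cref{thm:KKL_subpolynomial}, applied to the $\Aut(G)$-invariant event $\{\den{K_1}\ge\alpha\}$ on a vertex-transitive graph, one gets
\[
\p_p'\bigl(\den{K_1}\ge\alpha\bigr) \;\ge\; c\,\p_p\bigl(\den{K_1}\ge\alpha\bigr)\,\p_p\bigl(\den{K_1}<\alpha\bigr)\log|V|,
\]
and multiplying by $p\ge 1/2$ and using $\p_p(\den{K_1}\ge\alpha)\in[\delta,1-\delta]$ together with $p\cdot\p_p'(\den{K_1}\ge\alpha)\le\delta^{-1}$ yields $\log|V| \le 2/(c\delta^3)$, hence $|E|\le |V|^2 \le e^{4/(c\delta^3)}$. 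Then one can take $H=G$ (note $\|K_1(G)\|\ge\alpha$ is forced by $\p_p(\cA)>0$), so that $\omega\cup H\in\cA$ deterministically and $\p_p(\act_\alpha(H)) = 1-\p_p(\cA)\ge\delta$, while $|E(H)|$ is bounded by a constant depending only on $\delta$. You should add this case split to complete the proof; without it the argument is incomplete precisely where it matters for the later applications.
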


\begin{proof}[Proof of \Cref{cor:molecules:get_subgraph}]
The case $p \leq 1/2$ follows trivially from \cref{thm:hatami}. 
	Now assume $p \geq 1/2$. Since $\{ \den{K_1} \geq \alpha \}$ is invariant under automorphisms of $G$ and $G$ is vertex-transitive we can apply \cref{thm:KKL_subpolynomial} to obtain that there exists a universal constant $c>0$ such that
	\[		
		 \p_p' \bra{ \den{K_1} \geq \alpha }  \geq c \cdot  \p_p \bra{ \den{K_1} \geq \alpha } \p_p \bra{ \den{K_1} < \alpha } \log \abs{V}.
	\]
	 Plugging in our assumed bounds on $p$, $p \cdot  \p_p' \bra{ \den{K_1} \geq \alpha }$, and $\p_p \bra{ \den{K_1} \geq \alpha }$ gives $\abs{V} \leq e^{2/c\delta^3}$, in which case $|E|\leq e^{4/c\delta^3}$ and the result holds trivially with $\eps=\delta \wedge e^{-4/c \delta^3}$.
\end{proof}

When the event $\{ \den{K_1} \geq \alpha \}$ has a \emph{coarse} (i.e., not sharp) threshold on $G = (V,E)$, this lemma gives us a subgraph $H \subseteq G$ that has a good probability of activating $\alpha$. When $H$ has only a single edge $e$, we can use this in the other direction to establish a \emph{lower bound} on the sharpness of the phase transition. Indeed, $e$ activates $\alpha$ if and only if $e$ is closed and pivotal for the event $\{ \den{K_1} \geq \alpha \}$. So, by Russo's formula and \cref{lem:pc_lower} we have that
\[\begin{split}
	p \cdot \p_p' \bra{ \den{K_1} \geq \alpha } & p \sum_{f \in E} \p_p \bra{ f \text{ is pivotal for } \{ \den{K_1} \geq \alpha \}  } \\&\gtrsim \frac{\abs{ \operatorname{Orb(e)} }}{\deg G} \p_p \bra{ \act_\alpha(e) },
\end{split}\]
where $\operatorname{Orb(e)}$ denotes the orbit of the edge $e$ under the action of the automorphism group $\operatorname{Aut} G$. Contrasting this with the assumed \emph{upper bound} on the sharpness of the phase transition with which we started, we can extract information about the underlying graph $G$. For example, we immediately deduce that $G$ is dense, and we are only one step away from concluding that $G$ is part of a molecular sequence. Our main challenge is to reduce to this case, that is to say, to show that we can take $H$ to be a subgraph with a single edge. 

\medskip

During the proof we will want to apply a \emph{sprinkling} argument, where by slightly increasing the parameter $p$ we can make strict subsets of an activator $H$ become activators. 
One difficulty is that as we increase the percolation parameter, we may form a giant cluster with density at least $\beta$, in which case \emph{nothing} can be an activator. As such, we must carefully choose the amount that we sprinkle by at each step. To this end, we will work only with a special sequence of such values constructed by the following lemma.

\medskip

For the remainder of this section, given a finite, connected, vertex-transitive graph $G=(V,E)$, we write $d=\deg (G)$ and $\alpha_0=(2/|V|)^{1/3}$.  
Given $\beta>0$ and $\delta \in (0,1/2]$ we write
  $I = I(G,\beta,\delta)=[ p_c(\beta,\delta) , p_c(\beta,1-\delta) ]$ and $Q = Q(G,\beta,\delta)= \{ p \in I : p \cdot \p_p'(\|K_1\|\geq \beta) \leq \frac{4}{\delta} \}$.

\begin{lem}[A good sequence for sprinkling] \label{lem:molecules:function_and_set}
Let $G=(V,E)$ be a finite, connected, vertex-transitive graph, let $\delta>0$, and let $0<\beta \leq 1$.
	 There exists a sequence $(p_n)\seq$ in $Q$ such that
	\[
		p_{n+1} - p_n \geq 3^{-(n+1)} \mathcal L(Q) \quad \text{and} \quad \p_{p_{n+1}}(\|K_1\|\geq \beta) - \p_{p_{n}}(\|K_1\|\geq \beta) \leq 2^{-n}
	\]
	for every $n\geq 0$.
\end{lem}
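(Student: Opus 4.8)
\emph{Setup and reductions.} Throughout write $f(p):=\p_p(\|K_1\|\ge\beta)$; this is a polynomial in $p$, so it is either identically $1$ (precisely when $\beta\le 1/|V|$) or a strictly increasing homeomorphism of $[0,1]$. In the latter case $f$ carries $I=[p_c(\beta,\delta),p_c(\beta,1-\delta)]$ onto $[\delta,1-\delta]$, so $f$ takes values in $[\delta,1-\delta]$ on $Q\subseteq I$, and in particular $f$ rises by at most $1-2\delta<1$ across the whole of $I$. Also $Q$ is closed (since $f'$ is continuous), so $\min Q,\max Q\in Q$ whenever $Q\ne\emptyset$. First I would dispose of degenerate cases: if $f\equiv 1$, or more generally if $m:=\mathcal L(Q)=0$, then (assuming $\delta\le 1/2$, so that $I$ and hence $Q$ is nonempty) $Q$ is a nonempty closed set and the constant sequence $p_n\equiv\min Q$ satisfies both requirements trivially. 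So from now on assume $m>0$.

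\emph{The construction.} The plan is to build the sequence recursively from $p_0:=\min Q$, carrying along the invariant $\mathcal L(Q\cap[p_n,\max Q])\ge\tfrac12(1+3^{-n})m$, which holds at $n=0$. Given $p_n\in Q$ satisfying the invariant, the default choice of $p_{n+1}$ is the least point with $\mathcal L(Q\cap[p_n,p_{n+1}])=3^{-(n+1)}m$: such a point exists since the invariant forces $\mathcal L(Q\cap[p_n,\max Q])\ge m/2>3^{-(n+1)}m$; it lies in $Q$ because $Q$ has positive measure immediately to its left; it satisfies $p_{n+1}-p_n\ge\mathcal L(Q\cap[p_n,p_{n+1}])=3^{-(n+1)}m$; and it preserves the invariant by the one-line identity $\tfrac12(1+3^{-n})m-3^{-(n+1)}m=\tfrac12(1+3^{-(n+1)})m$, so the recursion never terminates. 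Crucially, at $n=0$ the default move automatically respects the $f$-bound, since $f(p_1)-f(p_0)\le f(\max Q)-f(\min Q)\le 1-2\delta<1=2^{0}$; this is the base case, and it is exactly where the total-variation bound on $f$ gets used to get the recursion started.

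\emph{The obstruction and the hard part.} The default move can violate $f(p_{n+1})-f(p_n)\le 2^{-n}$ at a later step $n\ge 1$ only in the presence of an ``expensive stretch'' just ahead of $p_n$ — a subinterval of $[p_n,\max Q]$, possibly a gap of $Q$ and possibly a portion of $Q$ on which $f'$ is large, across which $f$ rises by more than $2^{-n}$ while $\mathcal L(Q\cap[p_n,\,\cdot\,])$ has grown by less than $3^{-(n+1)}m$. The remedy is to reorganise the recursion so as to cross such stretches at early steps, where the $f$-budget $2^{-n}$ is large, and I expect the combinatorial bookkeeping involved to be the main difficulty: one must schedule the finitely many subintervals of $f$-rise exceeding $2^{-(n+1)}$ — there are fewer than $2^{n+1}$ of them inside $I$, since $f$ rises by less than $1$ there, and at step $0$ the budget $2^{0}=1$ already exceeds the entire $f$-rise $1-2\delta$ over $I$, so the very first step can absorb all the largest obstructions at once — to the first few steps, while still making the required Lebesgue progress $3^{-(n+1)}m$ at every step and never exhausting the Lebesgue mass of $Q$ (here it helps that one need only traverse $Q$ up to around its ``$m/2$-point'' $q^\ast$ with $\mathcal L(Q\cap[\min Q,q^\ast])=m/2$, across which $f$ still rises by less than $1$). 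Two observations keep this tractable: crossing a gap $(a,b)$ lands $p$ at $b\in Q$ and consumes no $Q$-mass, so both $p_n\in Q$ and the inequality $\mathcal L(Q\cap[p_n,p_{n+1}])\le 3^{-(n+1)}m$ (all the invariant update actually needs) persist, and moreover a single wide-enough gap already supplies the physical increment $3^{-(n+1)}m$ on its own. Once the scheduling is settled, $p_n\in Q$ for every $n$ and both displayed inequalities hold by construction, completing the proof.
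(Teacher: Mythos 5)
Your proposal handles the setup and degenerate cases correctly, and it correctly identifies the central difficulty: with a greedy ``smallest-possible step'' construction, the geometrically decreasing budget $2^{-n}$ may be too small to absorb a large $f$-rise that happens to lie deep inside $Q$. However, the proof has a genuine gap at exactly this point. You explicitly defer the resolution (``the combinatorial bookkeeping involved to be the main difficulty'', ``once the scheduling is settled\ldots''), and the remedies you sketch do not resolve it: taking a long first step to ``absorb all the largest obstructions at once'' merely pushes the frontier of the sequence forward, thereby exposing new portions of $Q$ whose $f$-rise must now be paid for at steps $1,2,\dots$ with budgets $2^{-1},2^{-2},\dots$. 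This is a recursive problem structurally identical to the one you started with, and it is not clear from what you have written that the reordering can always be made to work while simultaneously respecting the Lebesgue-progress lower bound and not overshooting $\max Q$.

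The paper sidesteps the scheduling problem entirely with a different, self-correcting recursion. After reducing to $X=[0,1]$ via an increasing $\phi:[0,1]\to X$ with $\mathcal L([0,\phi(x)]\cap X)=x\,\mathcal L(X)$ (closely mirroring your use of the mass function $q\mapsto\mathcal L(Q\cap[p_n,q])$), it maintains the strengthened invariant $f(x_n+3^{-n})-f(x_n)\le 2^{-n}$. At each step it trisects $[x_n,x_n+3^{-n}]$ into three equal pieces and advances $x_{n+1}$ to the left endpoint of whichever of the middle or right third carries the smaller $f$-increment; since these two increments sum to at most $2^{-n}$, the smaller is at most $2^{-(n+1)}$, so the invariant propagates, and both displayed inequalities follow immediately. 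The idea your proposal lacks is precisely this: instead of looking backward and trying to schedule payment for already-identified obstructions, one should maintain control of the $f$-rise over a geometrically shrinking window \emph{ahead} of the current position, so that every future budget constraint is satisfied automatically and no global bookkeeping is needed.
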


\begin{proof} [Proof of \cref{lem:molecules:function_and_set}]
We will prove more generally that if $X \subseteq [0,1]$ is a non-empty closed set and $f:X\to[0,1]$ is an increasing (but not necessarily continuous) function then there exists a sequence $(x_n)\seq$ in $X$ such that 
		$x_{n+1} - x_n \geq 3^{-(n+1)} \mathcal L(X)$ and $f(x_{n+1}) - f(x_n)\leq 2^{-n}$ 
	for every $n\geq 0$. The claim is trivial if $\cL(X)=0$ since we can take $x_n$ constant in this case, so we may assume that $\cL(X)>0$. 
	First consider the case $X = [0,1]$. Let $x_0 := 0$ and define $(x_n)\seq$ recursively as follows. Assume we have defined $x_n$ for some $n \geq 0$. Set $x_{n,i} := x_n + i 3^{-(n+1)}$ for each $i \in \{1,2,3\}$ and define
	\[
		x_{n+1} :=
		\begin{cases}
			x_{n,1} \quad &\text{if } f(x_{n,2})-f(x_{n,1}) \leq f(x_{n,3})-f(x_{n,2}), \\
			x_{n,2} \quad &\text{otherwise}.
		\end{cases}
	\]
	For each $n \geq 0$ write $y_n:=x_{n,3}$ so that $x_n \leq y_m$ for every $n\geq m \geq 0$.
	It follows by induction that $x_n \in [0,1]$, $x_{n+1} - x_n \geq 3^{-(n+1)}$, and $f(y_n)-f(x_n)\leq 2^{-n}$ for every $n\geq 0$, and the claim follows since $f(x_{n+1})\leq f(y_n)$ for every $n\geq 0$.

 	Now let $X \subseteq [0,1]$ be an arbitrary closed set with $\mathcal L(X) > 0$. Since $X$ is closed, we can define an increasing function $\phi : [0,1] \to X$ such that 
	\[
		\mathcal L ( [ 0,\phi(x) ] \cap X ) = x \cL(X)
	\]
	for all $x \in [0,1]$. Construct a sequence $(x_n)\seq$ by the above procedure but for the function $f \circ \phi$ instead of $f$. Then the sequence $(\phi(x_n))\seq$ has the properties that $\phi(x_n) \in X$, $\phi(x_{n+1}) - \phi(x_n) \geq (x_{n+1} - x_n) \mathcal L(X) \geq 3^{-(n+1)} \mathcal L(X)$, and $f ( \phi(x_{n+1})) - f (\phi(x_n)) \leq 2^{-n}$ for every $n \geq 0$ as required.
\end{proof}

We now state our key technical sprinkling proposition.

\begin{prop}[Reducing to a single edge by sprinkling] \label{prop:molecules:shrinking_activator_to_edge}
Let $G=(V,E)$ be a finite, simple, connected, vertex-transitive graph, let $\alpha_0 \leq \delta \leq 1/2$ and $\alpha_0 \leq \alpha \leq \beta \leq 1$, and suppose that $p_c(\beta,1-\delta) > e^\delta p_c(\beta,\delta)$. Let $(p_n)\seq$ be as in \cref{lem:molecules:function_and_set}. 
		 For each $\eps >0$ there exists $N=N(\alpha,\delta,\eps)$ such that for each $n \geq N$ there exists  $\eta_n=\eta_n(\alpha,\delta,\eps)>0$ such that if $H$ is a subgraph of $G$ with $\abs{E(H)} \leq \eps^{-1}$ and 
		$\p_{p_n} (\act_\beta(H) ) \geq \eps$
	then there exists $e\in E(H)$  such that $\p_{p_m}(\act_\beta(e)) \geq \eta_n$ for every $m \geq n + N$.
\end{prop}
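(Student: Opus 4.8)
The plan is to prove the proposition by induction on the number of edges $k:=|E(H)|$, the case $k=0$ being vacuous since the empty subgraph never activates anything. Throughout write $\cA=\{\den{K_1}\geq\beta\}$, which is a non-trivial increasing event at every parameter in $Q\subseteq[p_c(\beta,\delta),p_c(\beta,1-\delta)]$. For the base case $k=1$ we have $H=\{e\}$ and $\act_\beta(e)$ is just the event that $e$ is closed and pivotal for $\cA$; a direct manipulation gives $\p_p(\act_\beta(e))=\p_p(\cA\mid e\text{ open})-\p_p(\cA)$, and both terms are increasing in $p$ since $\cA$ is increasing. Hence for every $m\geq n$,
\[ \p_{p_m}(\act_\beta(e))\;\geq\;\p_{p_n}(\cA\mid e\text{ open})-\p_{p_n}(\cA)-\bigl(\p_{p_m}(\cA)-\p_{p_n}(\cA)\bigr)\;\geq\;\eps-\sum_{j\geq n}2^{-j}\;\geq\;\eps-2^{1-n}, \]
where the middle step uses the second defining property of the sequence $(p_n)\seq$ from \cref{lem:molecules:function_and_set}. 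Choosing $N$ with $2^{1-N}\leq\eps/2$ settles the base case with $\eta_n=\eps/2$.

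For the inductive step, assume the statement for subgraphs with at most $k$ edges and let $|E(H)|=k+1$ with $\p_{p_n}(\act_\beta(H))\geq\eps$. On $\act_\beta(H)$ one has $\omega\notin\cA$, so $\act_\beta(H\setminus\{e_i\})$ holds exactly when $\omega\cup(H\setminus\{e_i\})\in\cA$. We distinguish two cases according to whether $\p_{p_n}\bigl(\exists e_i\in E(H):\act_\beta(H\setminus\{e_i\})\ \big|\ \act_\beta(H)\bigr)$ is at least $1/2$ or less than $1/2$. In the first case the pigeonhole principle supplies a \emph{fixed} edge $e_i$ with $\p_{p_n}(\act_\beta(H\setminus\{e_i\}))\geq\eps/(2(k+1))$, and the inductive hypothesis applied with $\eps/(2(k+1))$ in place of $\eps$ finishes the proof (with a larger $N$ and a smaller $\eta_n$).

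The remaining case is the hard one. Here, with probability at least $\eps/2$ conditional on $\act_\beta(H)$, the subgraph $H$ is \emph{minimal}: $\omega_{p_n}\notin\cA$, $\omega_{p_n}\cup H\in\cA$, but $\omega_{p_n}\cup(H\setminus\{e_i\})\notin\cA$ for every $e_i$. Contracting each cluster of $\omega_{p_n}$ to a point, minimality forces the edges of $H$ to join $k+2$ clusters $C_0,\dots,C_{k+1}$ — each of density $<\beta$, with $\den{\bigcup_jC_j}\geq\beta$ — into a simple spanning tree in which every edge is load-bearing. Fix an edge $e\in E(H)$; deleting $e$ from this tree splits the clusters into two groups, with $x$ in one group and $y$ in the other ($x,y$ being the endpoints of $e$) and with the two group densities each $<\beta$ but summing to $\den{\bigcup_jC_j}\geq\beta$. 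The goal is to pass to a parameter $p_{n'}$ with $n'=n+N'$, $N'=N'(\eps,\delta,\alpha,k)$, at which $x$ and $y$ lie in distinct clusters $D_1\ni x$, $D_2\ni y$ with $\alpha\leq\den{D_1}<\beta$, $\den{D_2}<\beta$, $\den{D_1}+\den{D_2}\geq\beta$, and with no other cluster of density $\geq\beta$; on this event $e$ is closed and pivotal for $\cA$, i.e.\ $\act_\beta(e)$ holds. To manufacture such clusters we sprinkle from $p_n$ up to $p_{n'}$ and, using \cref{prop:insertion-tolerance} (the high-degree insertion-tolerance estimate) together with the universal tightness theorem of \cite{uni-tightness}, ``stick'' a moderately large cluster of density in $[\alpha,\beta)$ onto $x$ and another onto $y$: universal tightness supplies clusters of such density with probability bounded below at every parameter in $Q$, the inequality $\alpha\leq\beta$ keeps them subcritical for $\cA$, and insertion tolerance lets us attach $x$ (respectively $y$) to one of them by opening a single edge chosen from the large set of closed edges incident to $x$ (respectively $y$). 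The second property of $(p_n)\seq$ is used precisely to bound the probability that sprinkling has already fused everything into one cluster of density $\geq\beta$, which would destroy every activation event. This yields $\p_{p_{n'}}(\act_\beta(e))\geq\eta'$ for some $\eta'=\eta'(\eps,\delta,\alpha,k)>0$, and the base case applied to the single edge $e$ at index $n'$ propagates this to $\p_{p_m}(\act_\beta(e))\geq\eta'/2$ for all $m\geq n'$, hence for all $m\geq n+N$ once $N\geq N'+(\text{base-case threshold})$. Since the induction terminates after at most $k\leq\eps^{-1}+1$ steps, the final $N$ and the values $\eta_n$ depend only on $\eps,\delta,\alpha$ (and the step index), as required.

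The main obstacle is the hard case, and specifically the attachment step. The underlying tension is that sprinkling a single \emph{prescribed} edge has probability only of order $p_{n+1}-p_n\geq 3^{-(n+1)}\mathcal{L}(Q)$, which is far too small to afford at each iteration; the device of sticking large clusters onto $x$ and onto $y$ via \cref{prop:insertion-tolerance} and universal tightness is exactly what converts ``open one specific edge'' into ``open one edge from a positive-density collection'', so that a constant-size burst of sprinkled and inserted edges suffices and the step costs only a constant factor in probability. At the same time one must keep those very edges from merging the whole giant-in-waiting into an honest $\beta$-cluster, which is what makes the careful choice of $(p_n)\seq$ in \cref{lem:molecules:function_and_set} indispensable. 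Forcing all of these events to hold simultaneously on a set whose probability is bounded below purely in terms of $\eps,\delta,\alpha$ and the step number is the delicate heart of the argument, and is the reason the number of sprinkling steps — and hence $N$ — must be allowed to grow with $1/\eps$.
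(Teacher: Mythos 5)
Your base case is correct: the identity $\p_p(\act_\beta(e)) = \p_p(\omega\cup\{e\}\in\cA) - \p_p(\cA)$, the monotonicity of the first term, and the telescoping bound $\p_{p_m}(\cA)-\p_{p_n}(\cA)\leq\sum_{j\geq n}2^{-j}$ from \cref{lem:molecules:function_and_set} together give the claim for a single edge. Case 1 of your inductive step (where with conditional probability $\geq 1/2$ some proper subgraph also activates) is also fine: pigeonhole gives a fixed $e_i$ with $\p_{p_n}(\act_\beta(H\setminus\{e_i\}))\geq\eps/(2(k+1))$, and one may apply the inductive hypothesis with that smaller $\eps$.

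The gap is in Case 2, the ``hard case.'' Your structural analysis (that on the minimality event the edges of $H$ form a spanning tree of $k+2$ clusters $C_0,\dots,C_{k+1}$, all of whose edges are closed) is sound, but the ``manufacture'' step is not a proof. You need to show that, after sprinkling to $p_{n'}$, the fixed edge $e=\{x,y\}$ is closed and pivotal for $\{\|K_1\|\geq\beta\}$ with probability bounded below. This requires $x$ and $y$ to lie in distinct clusters $D_1,D_2$ with $\|D_1\|+\|D_2\|\geq\beta$ and no other cluster of density $\geq\beta$. Sticking a cluster of density $\geq\alpha$ onto each of $x$ and $y$ (which is what universal tightness and \cref{prop:insertion-tolerance} can in principle supply) gives only $\|D_1\|+\|D_2\|\geq 2\alpha$, which can be far smaller than $\beta$; nothing in the argument forces the sum to reach $\beta$. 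If instead you hope that the original groups of clusters (of densities $\rho_1,\rho_2$ with $\rho_1+\rho_2\geq\beta$) survive as $D_1,D_2$, then you would need the sprinkled edges to re-create the connections that $H\setminus\{e\}$ provided at $p_n$, which is a probability of order $(p_{n'}-p_n)^k$ without some averaging device. And if $\rho_1$ (say) is already close to $\beta$, sticking an extra density-$\alpha$ cluster onto $x$ pushes $D_1$ over $\beta$, destroying pivotality. In short, for a \emph{fixed} edge $e$ there is no mechanism in your argument that produces the required cluster structure with non-negligible probability.

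The paper avoids this entirely by never trying to make a specific edge pivotal in one jump. Its Lemma 4.11 removes one edge at a time from $H$ and, crucially, averages over \emph{translated copies}: using Lemma 4.12 one finds an edge $e=\{u,v\}\in E(H)$ such that with good probability $\act_\beta(H)$ holds together with a macroscopic cluster at $u$ and a cluster of size $\geq\eta d$ at $v$; then for each vertex $x$ one takes an automorphism $\phi_x$ with $\phi_x(v)=x$, and after sprinkling one argues that opening a \emph{single} $\phi_x(e)$ in a well-connected block $Z_r$ makes $\phi_y(H\setminus\{e\})$ activate for \emph{every} $y\in Z_r$, because the open paths $\phi_y(u)\leftrightarrow\phi_x(u)$ and $y\leftrightarrow x$ together with the newly opened $\phi_x(e)$ yield $\phi_y(u)\leftrightarrow\phi_y(v)$. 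This is precisely the step that converts ``open one specific edge'' into ``help a positive density of copies simultaneously,'' and it is what makes the per-step probability loss only a constant. Your proposal identifies the same tools (Proposition 2.3, universal tightness, the good sprinkling sequence), and the same tension (a single prescribed edge is too expensive to sprinkle), but it omits the automorphism-averaging that resolves that tension; as written, Case 2 does not go through.
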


Note that the choice of edge $e\in E(H)$ may depend on the choice of $n \geq N$ and that the constants $N$ and $\eta_n$ are independent of $G$ and $\beta$.

\medskip

We now show how \cref{prop:molecules:main} follows from \Cref{cor:molecules:get_subgraph,prop:molecules:shrinking_activator_to_edge}, deferring the proof of \cref{prop:molecules:shrinking_activator_to_edge} to \cref{subsec:sprinkling}.

\begin{proof}[Proof of \cref{prop:molecules:main} given \cref{prop:molecules:shrinking_activator_to_edge}]
Let $G=(V,E)$ be a finite, simple, connected, vertex-transitive graph of degree $d$ and let $\alpha_0=(2/|V|)^{1/3}$. Let $\alpha_0 \leq \alpha \leq \beta \leq 1$, let $\alpha_0 \leq \delta \leq 1/2$,  and suppose that
\[
	p_c(\beta,1-\delta) > e^\delta p_c(\beta,\delta).
\]
It suffices to prove that there exist positive constants $c=c(\alpha,\delta)$ and $C=C(\alpha,\delta)$ such that if $|V|\geq C$ then $\deg G \geq c |V|$ and there exists an automorphism-invariant set of edges $F$ with $|F|\leq C |V|$ such $G\setminus F$ has at most $C$ connected components.

\medskip

Let $(p_n)\seq$ be as in \cref{lem:molecules:function_and_set}.
 It follows from \Cref{cor:molecules:get_subgraph} that there exists $\eta_1=\eta_1(\delta)$ such that for each $p\in Q$ there exists a subgraph $H$ of $G$ such that $|E(H)|\leq \eta_1^{-1}$ and $\p_p(\act_\beta(H)) \geq \eta_1$. Applying this fact with $p=p_{n}$ for an appropriately large constant $n$, it follows from \cref{prop:molecules:shrinking_activator_to_edge} that there exist positive constants $\eta_2=\eta_2(\alpha,\delta)$ and $N_1 =N_1(\alpha,\delta)$ and an edge $e_0 \in E$ such that 
\[\p_{p_{n}}(\act_\beta(e_0))\geq \eta_2\]
for every $n \geq N_1$.
Since the edge $e_0$ activates $\beta$ if and only if $e_0$ is closed and pivotal for the event $\{ \den{K_1} \geq \beta \}$, we have by Russo's formula that
\[\begin{split}
	\p_{p_{n}}'(\|K_1\|\geq \beta) &= \sum_{e \in E} \p_{p_{n}} \bra{ e \text{ is pivotal for } \{ \den{K_1} \geq \beta \} } \\&\geq \abs{\operatorname{Orb}(e_0)} \p_{p_{n}} \bra{ \act_\beta( e_0  ) } \geq \eta_2 \abs{\operatorname{Orb}(e_0)}
\end{split}\]
for every $n\geq N_1$.
Since $p_{N_1} \geq p_c(\beta,\delta)$ and $\alpha,\delta \geq \alpha_0$ it follows from \cref{lem:pc_lower} that
 $p_{N_1}\geq 1/2d$ and hence that
\[
	\eta_2 \frac{\abs{\operatorname{Orb}(e_0)}}{2d} \leq p_{N_1} \cdot \p_{p_{N_1}}'(\|K_1\|\geq \beta) \leq \frac{4}{\delta}
\]
for every $n \geq N_1$, 
where the upper bound follows since $p_{n}\in Q$ for every $n\geq 1$.
Since $G$ is vertex-transitive $\abs{\operatorname{Orb}(e_0)} \geq \frac{1}{2}\abs{V}$ and it follows that the constant $c_2=c_2(\alpha,\delta)=\delta \eta_2/16$ satisfies
\[
d \geq c_2 |V|.
\]
 This establishes the desired density of $G$. 
  
  \medskip

  All that remains is to find an $\operatorname{Aut} G$-invariant linear-sized set of edges $F \subseteq E$ that disconnects $G$ into a bounded number of components. Let $C_1=6/c_2$, let $\eta_3 = \eta_2 \wedge C_1^{-1}$, and let  $N_2=N(\alpha,\delta,\eta_3)$ and $\eta_4=\eta_{N_1 \vee N_2}(\alpha,\delta,\eta_3)$ be as in \cref{prop:molecules:shrinking_activator_to_edge}. We claim that if we set $k = N_1 \vee N_2 + N_2 $ then there exists a constant $C_2 = C_2(\alpha,\delta)$ such that the set
\[
	F := \{ e \in E : \p_{p_k} \bra{\act_\beta( e )} \geq \eta_4 \}
\]
has the desired properties when $|V|\geq C_2$. 
This set $F$ is clearly $\operatorname{Aut}G$-invariant. Moreover, by Russo's formula, since $p_k \cdot \p_{p_k}'(\|K_1\|\geq \beta) \leq \frac{4}{\delta}$ and $p_k \geq 1/2d \geq 1/2|V|$, we have that
\[\begin{split}
	 \abs{F} &\leq \frac{1}{\eta_4}\sum_{e \in E} \p_{p_k} \bra{ e \text{ is pivotal for } \{ \den{K_1} \geq \beta \} }\\ &=  \frac{1}{\eta_4}\p_{p_k}'(\|K_1\|\geq \beta) \leq \frac{8}{\delta \eta_4} |V|,
\end{split}\]
so that $|F|$ is at most linear in $|V|$. 
 Since $|F| \leq \frac{8}{\delta \eta_4}|V|$ and $d=\deg G \geq c_2 |V|$, we have that 
\[
\deg(G\setminus F) \geq c_2 |V| - \frac{16}{\delta \eta_4}.  
\]
Thus, there exists a constant $C_2=C_2(\alpha,\delta)$ such that if $|V|\geq C_2$ then 
\[
\deg(G\setminus F) \geq \frac{c_2}{2} |V|.
\]
It now suffices to prove that $G \setminus F$ is not connected when $|V|\geq C_2$. Indeed, once this is shown it follows automatically that $G\setminus F$ has a bounded number of components since if $G \setminus F$ has $m$ components then $\frac{c_2}{4} |V|^2 \leq |E| \leq m (|V|/m)^2 + |F|$, hence there exists a constant $C_4=C_4(\alpha,\delta)$ such that $m \leq 1+4/c_2$ when $|V|\geq C_4$.

\medskip

Suppose for contradiction that $|V|\geq C_2$ and that $G \setminus F$ is connected.
Since $G\setminus F$ is connected and $\deg(G\setminus F) \geq \frac{c_2}{2} |V|$, it follows from \cref{lem:dense-diam} that $\operatorname{diam}(G\setminus F) \leq 6/c_2 = C_1$.
Let $P$ be a path of length at most $C_1$ connecting the endpoints of $e_0$ in $G \setminus F$. If $e_0$ activates $\beta$ then so does the set $P$, and it follows that 
\[
|P| \leq C_1 \leq \eta_3^{-1} \quad \text{ and } \quad \p_{p_{N_1 \vee N_2}}(\act_\beta(P)) \geq \p_{p_{N_1 \vee N_2}}(\act_\beta(e_0)) \geq \eta_2 \geq \eta_3.
\]
As such, it follows from the definitions of the quantities $N_2=N(\alpha,\delta,\eta_3)$ and $\eta_4=\eta_{N_1 \vee N_2}(\alpha,\delta,\eta_3)$ as in \cref{prop:molecules:shrinking_activator_to_edge} that there exists an edge $e_1 \in P$ such that  $\p_{p_k}(\act_\beta(e_1)) \geq \eta_4$. This implies that $e_1\in F$, a contradiction. \qedhere

\end{proof}

\subsection{Proof of \Cref{prop:molecules:shrinking_activator_to_edge}}
\label{subsec:sprinkling}

In this section we complete the proof of \cref{prop:molecules:main} by proving \cref{prop:molecules:shrinking_activator_to_edge}. The proof will proceed inductively, showing that -- by changing to a different value of $p\in Q$ if necessary -- we can reduce the number of edges in the subgraph $H$ given by \Cref{cor:molecules:get_subgraph} while keeping $\p_p \bra{ \act_\alpha(H) }$ bounded away from zero. 
More precisely, we will deduce \cref{prop:molecules:shrinking_activator_to_edge} as an inductive consequence of the following lemma.

\begin{lem}[Removing one edge by sprinkling] \label{lem:molecules:shrinking_activator}
Let $G=(V,E)$ be a finite, simple, connected, vertex-transitive graph, let $\alpha_0 \leq \delta \leq 1/2$ and $\alpha_0 \leq \alpha \leq \beta \leq 1$, and suppose that $p_c(\beta,1-\delta) > e^\delta p_c(\beta,\delta)$. Let $(p_n)\seq$ be as in \cref{lem:molecules:function_and_set}. 
		 For each $\eps >0$ there exists $N=N(\alpha,\delta,\eps)$ such that if $n \geq N$ then there exists $\eta_n=\eta_n(\alpha,\delta,\eps)>0$ such that if $H$ is a subgraph of $G$ with $\abs{E(H)} \leq \eps^{-1}$ and 
		$\p_{p_n} (\act_\beta(H) ) \geq \eps$
	then there exists a subgraph $H'$ of $H$ with $|E(H')|\leq \max\{1,|E(H)|-1\}$  such that $\p_{p_m}(\act_\beta(H')) \geq \eta_n$ for every $m > n$.
\end{lem}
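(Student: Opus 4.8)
The plan is to first reduce the lemma to a statement at the single parameter $p_{n+1}$, and then to do the real work there by a sprinkling‑and‑reconnection argument. For the reduction, fix an admissible $H$ and a candidate $H'\subseteq H$. In the monotone coupling $\omega_{p_n}\subseteq\omega_{p_{n+1}}\subseteq\omega_{p_m}$, so if $\mathcal E_0$ is an $\omega_{p_n}$‑measurable event with $\mathcal E_0\subseteq\{\|K_1(\omega_{p_n})\|<\beta\}\cap\{\|K_1(\omega_{p_n}\cup H')\|\ge\beta\}$, then $\mathcal E_0\cap\{\|K_1(\omega_{p_m})\|<\beta\}$ witnesses $\act_\beta(H')$ at $\omega_{p_m}$; since $\{\|K_1(\omega_{p_n})\|\ge\beta\}\subseteq\{\|K_1(\omega_{p_m})\|\ge\beta\}$, the defining bound $\P_{p_m}(\|K_1\|\ge\beta)-\P_{p_n}(\|K_1\|\ge\beta)\le\sum_{i\ge n}2^{-i}=2^{-n+1}$ of the good sequence from \cref{lem:molecules:function_and_set} gives
\[
\P_{p_m}\bigl(\act_\beta(H')\bigr)\ \ge\ \P_{p_n}(\mathcal E_0)-2^{-n+1}\qquad\text{for every }m>n .
\]
The same holds with $\omega_{p_{n+1}}$ in place of $\omega_{p_n}$ and $\mathcal E_0$ allowed to depend on $\omega_{p_{n+1}}$, at the cost of replacing $2^{-n+1}$ by $2^{-n}$. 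So it suffices, uniformly over admissible $H$, to produce such an event at $p_{n+1}$ (built from $\omega_{p_n}$ using only the extra edges $\omega_{p_{n+1}}\setminus\omega_{p_n}$) of probability at least some $c(\alpha,\delta,\eps)>0$; one then takes $\eta_n:=c(\alpha,\delta,\eps)/2$ and $N=N(\eps,\delta,\alpha)$ with $2^{-N}\le c(\alpha,\delta,\eps)/2$. Since $p_{n+1}\ge p_c(\beta,\delta)\ge p_c(\alpha\wedge\delta,\alpha\wedge\delta)$ and $|V|=2\alpha_0^{-3}\ge 2(\alpha\wedge\delta)^{-3}$, throughout we may use $\min_{u,v}\P_{p_{n+1}}(u\leftrightarrow v)\ge\tau(\alpha\wedge\delta)>0$ from \cref{thm:2point_main} and $p_{n+1}\ge 1/2d$ from \cref{lem:pc_lower}.

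Next I would choose the edge to delete. Working under $\P_{p_n}$ on $\act_\beta(H)$, let $\Gamma$ be the largest cluster of $\omega_{p_n}\cup H$; as $|E(H)|\le\eps^{-1}$ it is a union of at most $\eps^{-1}+1$ clusters of $\omega_{p_n}$, and of at least two since $\|\Gamma\|\ge\beta>\|K_1(\omega_{p_n})\|$. For each configuration we designate a deletion edge: if some edge of $H$ lies outside $\Gamma$ or is a non‑bridge of $\Gamma$ in $\omega_{p_n}\cup H$, or if two constituent clusters of $\Gamma$ joined by an edge $e'$ of $H$ have densities summing to $\ge\beta$, then taking $H'=H\setminus\{e\}$ (resp.\ $H'=\{e'\}$) gives an $\omega_{p_n}$‑measurable $\mathcal E_0\subseteq\{\|K_1(\omega_{p_n})\|<\beta\}\cap\{\|K_1(\omega_{p_n}\cup H')\|\ge\beta\}$ — the \emph{easy case}; otherwise every edge of $H$ is a bridge (so the constituents form a tree under $H$) and every $H$‑adjacent pair of constituents has density sum $<\beta$, and we take $e=\{x,y\}$ to be a \emph{leaf} edge of that tree, writing $A=\{x$'s constituent$\}$ and $B$ for the union of the others — the \emph{hard case}. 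A bounded case split and pigeonhole over the $\le\eps^{-1}$ edges of $H$ fix one choice on an event of $\P_{p_n}$‑probability at least a fixed power of $\eps$; in particular $|E(H')|\le\max\{1,|E(H)|-1\}$ (and if $|E(H)|\le1$ we simply keep $H'=H$, which falls under the easy case). In the easy case the reduction already finishes the proof.

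The hard case is where the real work lies: here $\omega_{p_n}\cup H'$ has no density‑$\beta$ cluster, so no $\omega_{p_n}$‑measurable witness exists, and we must use the sprinkled edges $S:=\omega_{p_{n+1}}\setminus\omega_{p_n}$ to reconnect $A$ to $B$ inside $\omega_{p_{n+1}}\cup H'$ while keeping $\omega_{p_{n+1}}$ giant‑free. The plan is: \emph{(i)} use a first portion of $S$ together with the universal tightness theorem of \cite{uni-tightness} and the quantitative insertion‑tolerance \cref{prop:insertion-tolerance} to ``stick large but sub‑$\beta$‑density clusters onto both $x$ and $y$'', i.e.\ to pass, with conditional probability bounded below in $\alpha,\delta,\eps$, to a configuration in which $x,y$ lie in $\omega_{p_{n+1}}$‑clusters $\widehat C_x\supseteq A$ and $\widehat C_y$ (meeting a constituent of $B$) whose densities lie in a controlled window — large enough that, by transitivity and the two‑point bound, the argument of \cref{lem:coarsening} yields a macroscopic bundle $F\subseteq S$ of sprinklable edges between $\widehat C_x$ and $\widehat C_y$, yet small enough that merging them keeps density below $\beta$; \emph{(ii)} apply \cref{prop:insertion-tolerance} once more to $F$ to open one of its edges with probability bounded below, reconnecting $A$ to $B$ in $\omega_{p_{n+1}}\cup H'$ and hence producing a density‑$\beta$ cluster there; \emph{(iii)} observe that, as these operations merge only controlled‑density clusters inside a bounded ``region'', $\omega_{p_{n+1}}$ can acquire a giant only through the sprinkled edges \emph{outside} this region — an event that is rare (probability at most $2^{-n}$ above its value at $p_n$) and essentially independent of the reconnection — so the reconnection event intersected with $\{\|K_1(\omega_{p_{n+1}})\|<\beta\}$ still has probability bounded below in $\alpha,\delta,\eps$. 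Feeding this event into the reduction completes the proof.

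The hard part will be step \emph{(iii)}. The reconnection event is increasing in the sprinkle while the no‑giant event is decreasing, so a union or FKG bound between them points the wrong way and is hopelessly lossy; reconciling them forces the reconnection to be routed through clusters whose sizes are pinned from both sides — not too small (else the sprinkle, whose density may be tiny because $d$ can be enormous, cannot find them) and not too large (else merging two of them overshoots $\beta$) — and it is exactly to locate and size such clusters that the universal tightness theorem is needed rather than a soft argument. This is why it is the most technical point of the paper. The remaining work — verifying the combinatorial dichotomy that selects $e$, cleanly separating $S$ into a ``reconnection region'' and ``the rest'', and the bookkeeping that keeps $\eta_n$ a function of $\alpha,\delta,\eps$ alone and not of $d$ or $|V|$ — is routine but delicate.
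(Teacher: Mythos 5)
Your reduction to a single parameter $p_{n+1}$ via the monotone coupling is correct, and your identification of the key tools (universal tightness and quantitative insertion tolerance) is on the mark. However, your step \emph{(ii)} in the hard case contains a genuine gap that the paper's proof is specifically engineered to avoid, and the gap is not a matter of bookkeeping.

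You want a ``macroscopic bundle'' $F$ of edges between two specific random clusters $\widehat{C}_x\supseteq A$ and $\widehat{C}_y$ and then apply \cref{prop:insertion-tolerance} to open one of them. But the conclusion of that proposition is only useful when $(p_{n+1}-p_n)\cdot|F|$ is bounded below, and since $p_{n+1}-p_n$ is only of order $3^{-n}\delta/d$, you would need $|F|\gtrsim 3^{n}d/\delta$. The mechanism you invoke for this --- ``by transitivity and the two-point bound, the argument of \cref{lem:coarsening}'' --- does not apply. In \cref{lem:coarsening}, the lower bound $|I|\ge |C|^2/|V|$ on the number of edges between components $C,C'$ of $G^*$ is deduced from the fact that the partition into components of $G^*$ is $\operatorname{Aut}(G)$-invariant: transitivity then forces \emph{every} vertex of $C$ to have a boundary edge, giving $|\partial_E C|\ge|C|$, and pigeonhole does the rest. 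Random percolation clusters have no such automorphism invariance, and in a transitive graph two specific sets of linear density can have a sub-macroscopic or even empty edge-boundary between them (think of two distant macroscopic arcs of the cycle $\Z/n\Z$). So there is no reason for $F$ to exist, and without it the single-copy reconnection has a success probability that can vanish with $|V|$ and $d$.

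The paper escapes this by never fixing a single copy of $H$. It considers the $|V|$ automorphic images $\phi_x(H)$ simultaneously; extracts (via pigeonhole and \cref{lem:molecules:attaching_clusters}) a set $Y$ of density $\gtrsim\eps_1^2$ of vertices $x$ for which $\phi_x(H)$ activates $\beta$ with a large cluster at $\phi_x(u)$ and a cluster of size $\gtrsim\eps_1 d$ at $x=\phi_x(v)$, all the $\phi_x(u)$'s lying in one $\omega$-cluster; and then applies \cref{lem:molecules:sets} to carve $Y$ into $\gtrsim|V|/d$ disjoint $\omega$-connected blocks $Z_r$ each of size $\gtrsim\eps_1^3 d$. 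Because each $Z_r$ is $\omega$-connected and all $\phi_x(u)$ share a cluster, opening a \emph{single} sprinkled edge $\phi_x(e)$ with $x\in Z_r$ makes $\phi_{x'}(H\setminus\{e\})$ activate for \emph{every} $x'\in Z_r$; and a block of $\gtrsim\eps_1^3 d$ candidate edges is exactly what the sprinkle density $\sim 3^{-n}/d$ can hit with probability $\gtrsim 3^{-n}$. Averaging over the $\gtrsim|V|/d$ blocks, a $\gtrsim 3^{-n}$ fraction succeed, which via transitivity gives $\p_{p_m}(\act_\beta(H\setminus\{e\}))\gtrsim 3^{-n}$ (so $\eta_n$ does genuinely depend on $n$, unlike in your claim). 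This automorphism-averaging step --- replacing ``connect two linear-sized sets'' by ``hit one edge in a degree-sized set, across linearly many independent tries'' --- is the missing idea, and it is also where the universal tightness theorem is actually needed (to get $|K_v|\gtrsim d$ in \cref{lem:molecules:attaching_clusters}), rather than in the way you sketch. Your worry about FKG between ``reconnect'' and ``no giant'' is, by contrast, a non-issue: since $p_n,p_m\in Q$, the increase in $\P(\|K_1\|\ge\beta)$ is at most $2^{-n+1}$ and a plain union bound suffices, as you yourself noted in the reduction step.
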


Note that the choice of subgraph $H'$ of $H$ may depend on the choice of $n \geq N$ and that the constants $N$ and $\eta_n$ are independent of $G$ and $\beta$. Also, note that $N$ and $\eta_n$ here are not the same $N$ and $\eta_n$ as in the statemenet of \cref{prop:molecules:shrinking_activator_to_edge}.

\begin{proof}[Proof of \cref{prop:molecules:shrinking_activator_to_edge} given \cref{lem:molecules:shrinking_activator}]
Applying \cref{lem:molecules:shrinking_activator} iteratively $\lfloor 1/\eps\rfloor$ times yields the claim.
\end{proof}

For the remainder of this section we fix a finite, simple, connected, vertex-transitive graph $G=(V,E)$ of degree $d$, fix $\alpha_0 \leq \delta \leq 1/2$ and $\alpha_0 \leq \alpha \leq \beta \leq 1$ such that $p_c(\beta,1-\delta) > e^\delta p_c(\beta,\delta)$, and let $(p_n)\seq$ be as in \cref{lem:molecules:function_and_set}. We also continue to write $I = I(G,\beta,\delta)=[ p_c(\beta,\delta) , p_c(\beta,1-\delta) ]$ and $Q = Q(G,\beta,\delta)= \{ p \in I : p \cdot \p_p'(\|K_1\|\geq \beta) \leq \frac{4}{\delta} \}$.

\medskip

When $G$ has bounded vertex degrees, and hence critical parameters are bounded away from zero, \cref{lem:molecules:shrinking_activator} could be proven easily by (classical) insertion-tolerance. The problem is rather more delicate in general. The idea is to use vertex-transitivity of $G$ to find many copies of $H$ that each activate $\beta$ simultaneously, then \emph{sprinkle}, i.e.\! open a small number of additional edges by slightly increasing the percolation parameter, and argue that, after sprinkling, many of the copies of $H$ have strict subgraphs that are activators.  To ensure that sprinkling reduces the number of edges necessary to activate $\beta$ in a positive proportion of the copies of $H$, we need these copies to be well-connected to each other in the open subgraph. We guarantee this by sticking a large cluster to each endpoint of an edge in $H$, using the next lemma.

\begin{lem} \label{lem:molecules:attaching_clusters}
For every $\eps > 0$ there exists $\eta=\eta(\alpha,\delta,\eps)>0$ such that if $p\in Q$ and $H$ is a subgraph of $G$ with $\abs{E(H)} \leq \frac{1}{\eps}$ and $\p_p \bra{ \act_\beta(H) } \geq \eps$ then there is an edge $e \in E(H)$ with endpoints $u$ and $v$ such that
	\[
		\p_p \bra{ \act_\beta(H) \cap \{ \den{K_u} \geq \eta \} \cap \{ \abs{K_v} \geq \eta d \} } \geq \eta.
	\]
\end{lem}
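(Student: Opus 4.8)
The plan is to combine a combinatorial description of the giant cluster of $\omega\cup H$ with the quantitative insertion-tolerance estimate \cref{prop:insertion-tolerance}. First I would locate an edge of $H$ one of whose endpoints lies in a macroscopic cluster of $\omega$; then I would use insertion-tolerance to force the other endpoint into a cluster of size at least $\eta d$.

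\textbf{Step 1: a macroscopic cluster at one endpoint.} Suppose $\act_\beta(H)$ occurs and let $C$ be the cluster of $\omega\cup H$ with $\den{C}\geq\beta$. Deleting the at most $1/\eps$ edges of $H$ lying inside $C$ breaks $C$ into a union of at most $1/\eps+1$ clusters of $\omega$; since $\omega\notin\{\den{K_1}\geq\beta\}$, none of these $\omega$-clusters has density $\geq\beta$, so there are at least two of them and the largest, $K^\star\subseteq C$, has $\den{K^\star}\geq\beta\eps/2=:\eta_0$. Contracting each of these $\omega$-clusters makes $C$ into a connected graph, so $K^\star$ is incident to an edge of $H$ inside $C$; orienting such an edge so its tail lies in $K^\star$ and applying the pigeonhole principle over the at most $2/\eps$ oriented edges of $H$ produces a \emph{fixed} oriented edge $e^\star=(u^\star,v^\star)$ and an event $A\subseteq\act_\beta(H)$ with $\p_p(A)\geq\eps^2/2$ on which $\den{K_{u^\star}}\geq\eta_0$ and $v^\star\in C$ (hence $e^\star$ lies inside $C$, and $e^\star$ is closed in $\omega$ unless $v^\star\in K^\star$).

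\textbf{Step 2: a cluster of size $\geq\eta d$ at the other endpoint.} If $\p_p\bigl(A\cap\{\abs{K_{v^\star}}\geq\eta d\}\bigr)\geq\eps^2/4$ for a suitable $\eta=\eta(\alpha,\delta,\eps)>0$ we are done, so assume that on a sub-event $A'$ of $A$ with $\p_p(A')\geq\eps^2/4$ the cluster $K_{v^\star}$ has fewer than $\eta_1 d$ vertices; then $v^\star\notin K^\star$ and $e^\star$ is closed on $A'$. The idea is to apply \cref{prop:insertion-tolerance} with $F$ the set of $d$ edges of $G$ incident to $v^\star$ — so that $p\abs{F}=pd\geq\tfrac12$ by \cref{lem:pc_lower} — and with $F[\omega]$ the collection of edges at $v^\star$ that are closed in $\omega$ and whose far endpoint lies in a cluster of density $\geq\eta_0/2$: opening such an edge merges $K_{v^\star}$ into a cluster of at least $\eta_0\abs{V}\geq\eta_0 d$ vertices, keeps $K_{u^\star}\supseteq K^\star$ macroscopic, and preserves $\act_\beta(H)$ except when the merge would raise the larger cluster's density to exactly $\beta$. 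To run this I need $\abs{F[\omega]}\geq\eta_2 d$ on a positive-probability sub-event; since $K^\star$ alone puts an $\eta_0$-proportion of all vertices into clusters of density $\geq\eta_0/2$, this should follow from a counting argument using vertex-transitivity. The borderline case forces $\den{K^\star}$ into the window $[\beta-O(\eta),\beta)$, i.e.\ $K^\star$ is already nearly giant; there I would argue directly — either by noting that the deficit $\beta-\den{K^\star}$ must then have been supplied by a cluster of size $\geq\eta d$ sitting, in the contracted graph, next to a macroscopic cluster (so that some edge of $H$ already works after one controlled insertion), or by using tightness of $\den{K_1}$ to bound the probability of that density window.

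\textbf{The main obstacle} is Step 2. A priori the fixed vertex $v^\star$ could lie in a sparse region with only its single neighbour $u^\star$ in a large cluster, so the candidate set $F[\omega]$ could be as small as a single edge, which is useless for \cref{prop:insertion-tolerance}; showing that a positive fraction of the $d$ edges at $v^\star$ reach large clusters — conditionally on the complicated event $A$, and uniformly in the degree $d$ — is the crux, and this is where the hypotheses $p\in Q$ (coarseness of the threshold for $\{\den{K_1}\geq\beta\}$) and $\p_p(\act_\beta(H))\geq\eps$ should enter essentially.
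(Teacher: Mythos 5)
Your Step 1 is correct and essentially the same as the paper's (though slightly more elaborate: the paper simply observes that on $\act_\beta(H)$ the giant of $\omega\cup H$ is covered by $\bigcup_{w\in V(H)}K_w$, so pigeonhole over $V(H)$ already gives a fixed $u$ with $\p_p(\act_\beta(H)\cap\{\|K_u\|\geq\alpha\eps/2\})\geq\eps^2/2$). The genuine gap is exactly where you say it is, in Step 2, and your proposed fix does not close it.

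The issue with your counting argument is conditioning. You want to show that, conditional on the complicated event $A'$, a positive fraction of the $d$ edges at $v^\star$ reach large clusters. Vertex-transitivity gives you control of the \emph{unconditional} expectation of such quantities; once you condition on $A'$ the symmetry is gone, and nothing prevents $v^\star$ from sitting in a sparse region on $A'$ — this is the obstacle you flag yourself. The paper's resolution is structurally different in two ways. First, it does not condition: it lets $X$ be the largest $\omega$-connected subset of $N(v)$ and uses the \emph{unconditional} bound $\e_p|X|\geq\e_p|K_1\cap N(v)|=d\,\e_p\|K_1\|\geq\alpha\delta d$ (vertex-transitivity and $p\geq p_c(\alpha,\delta)$). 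Then it invokes the universal tightness theorem (\cref{thm:molecules:uni_tightness}) to turn this into a one-sided tail bound $\p_p(|X|\leq c_1 d)\leq\eps^2/4$, and finishes by a plain union bound against the Step 1 event. The universal tightness theorem is the ingredient your proposal is missing, and it is what makes the conditioning problem go away. Second, $X$ lies entirely in a single $\omega$-cluster, so opening any one edge of $vX$ immediately forces $|K_v|\geq|X|$; your $F[\omega]$ (edges to arbitrary large clusters) is less structured and forces you into the extra bookkeeping you sketch.

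Your treatment of the ``borderline case'' (opening the sprinkled edge could create a density-$\beta$ cluster) is also too vague to be a proof, and this is precisely where $p\in Q$ enters, in a sharper way than you indicate. The paper argues by contradiction: if $\p_p(\{|X|\geq c_1 d\}\cap\act_\beta(vX))\geq\eps^2/8$, then summing pivotal probabilities over $vX$ gives $\sum_{f\in vX}\p_p(\act_\beta(f))\geq c_1\eps^2 d/8$, and Russo's formula together with vertex-transitivity yields $\p_p'(\|K_1\|\geq\beta)\gtrsim d|V|$. Combined with $p\geq 1/2d$ (from \cref{lem:pc_lower}) this makes $p\cdot\p_p'(\|K_1\|\geq\beta)\gtrsim|V|$, which contradicts the defining inequality $p\cdot\p_p'(\|K_1\|\geq\beta)\leq 4/\delta$ of $p\in Q$ once $|V|$ is large. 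This Russo-plus-coarseness contradiction is what replaces your hand-waved ``tightness of $\|K_1\|$ to bound the probability of that density window.'' In short: your plan has the right skeleton but is missing the two load-bearing tools — universal tightness to bound $|X|$ unconditionally, and Russo's formula against $p\in Q$ to exclude the accidental creation of a giant — and without them Step 2 does not go through.
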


The proof of this lemma uses the quantitative insertion-tolerance estimate of \cref{prop:insertion-tolerance} together with the following theorem of the second author \cite[Theorem 2.2]{uni-tightness}, which guarantees that the size of the largest intersection of a cluster with a fixed set of vertices is always of the same order as its mean with high probability. We state a special case of the theorem that is adequate for our purposes.

\begin{thm}[Universal Tightness] \label{thm:molecules:uni_tightness} There exist universal constants $C,c>0$ such that the following holds.
	Let $G = (V,E)$ be a countable, locally finite graph, let $\Lambda \subseteq V$ be a finite non-empty set of vertices, and let $p \in [0,1]$ be a parameter. Set $\abs{M} := \max\{ \abs{K_v \cap \Lambda} : v \in V  \}$. Then 
	\[
		\p_p \bra{ \abs{M} \geq \alpha \e_p \abs{M} } \leq Ce^{-c\alpha} \qquad \text{and} \qquad \p_p \bra{ \abs{M} \leq \eps \e_p\abs{M} } \leq C \eps
	\]
	for every $\alpha \geq 1$ and $0 < \eps \leq 1$.
\end{thm}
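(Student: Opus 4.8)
The plan is to normalise by $\mu:=\e_p\abs{M}$ and prove the two tail bounds separately, building everything on two structural observations. The first is the purely combinatorial sandwich
\[
\abs{M}^2\;\le\;\sum_{v\in\Lambda}\abs{K_v\cap\Lambda}\;=\;\sum_{\text{clusters }C}\abs{C\cap\Lambda}^2\;\le\;\abs{M}\cdot\abs{\Lambda},
\]
which, after taking expectations, yields $\mu\ge\chi:=\abs{\Lambda}^{-1}\sum_{v\in\Lambda}\e_p\abs{K_v\cap\Lambda}$ and, more usefully, lets one replace the awkward maximum by the additive quantity $S:=\sum_{v\in\Lambda}\abs{K_v\cap\Lambda}$ and the deterministic $\abs{\Lambda}$. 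The second is a sequential \emph{cluster-peeling} exploration: fix an enumeration of $\Lambda$ and repeatedly reveal the cluster of the least-indexed not-yet-explored vertex of $\Lambda$, producing disjoint chunks $C_1,C_2,\dots$ that partition $\Lambda$ and are adapted to a filtration $(\mathcal F_i)$. Since deleting $C_i$ together with the vertex set of its cluster from $G$ only shrinks every remaining cluster and every remaining intersection with $\Lambda$, each residual configuration is an instance of the same problem on a subgraph $G_i$ with set $\Lambda_i\subseteq\Lambda$; writing $\abs{M^{(i)}}$ and $\mu^{(i)}:=\e_p^{G_i}\abs{M^{(i)}}$ for the residual quantities one has the monotonicity $\mu^{(i)}\le\mu$, the recursive identity $\abs{M}=\max\bigl(\abs{C_1},\dots,\abs{C_i},\abs{M^{(i)}}\bigr)$ (so that $\abs{M^{(i)}}=\abs{M}$ as soon as $\abs{M}>\max_{j\le i}\abs{C_j}$), and the conditional domination $\e_p[\abs{C_{i+1}}\mid\mathcal F_i]\le\mu^{(i)}\le\mu$.

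For the upper tail I would establish a self-improving decoupling estimate and iterate it. Given $t\gtrsim\mu$, run the peeling exploration but stop the exploration of the current cluster at the first moment $\sigma$ at which the revealed connected open fragment $H_\sigma$ meets $\Lambda$ in exactly $t$ vertices; on $\{\abs{M}\ge 2t\}$ this moment is reached by the cluster achieving the maximum, and conditionally on $\mathcal F_\sigma$ the event forces the cluster of $H_\sigma$ in the still-unexplored (fresh, Bernoulli-$p$) edges to reach at least $t$ further vertices of $\Lambda$. A union bound over the vertices of $H_\sigma$ incident to unexplored edges, followed by the strong Markov property of the exploration, bounds this conditional probability by $\abs{V(H_\sigma)}\,\p_p(\abs{M}\ge t)$, and the combinatorial sandwich applied to the fragment $H_\sigma$ is what should be used to control $\abs{V(H_\sigma)}$. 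Summing over the possible fragments and then iterating in additive steps of size comparable to $\mu$ upgrades a single such inequality into $\p_p(\abs{M}\ge\alpha\mu)\le Ce^{-c\alpha}$; obtaining a clean exponential rate requires the decoupling to lose only a bounded factor per step, which is exactly where the volume bound on $H_\sigma$ must be made to do real work. A useful by-product to record at this point is $\e_p\abs{M}^2=\int_0^\infty 2t\,\p_p(\abs{M}\ge t)\,\mathrm dt\le K\mu^2$ for a universal constant $K$.

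For the lower tail the starting point is that $\e_p\abs{M}^2\le K\mu^2$ together with the Paley--Zygmund inequality already gives $\p_p(\abs{M}\ge c_0\mu)\ge 1/K$ for a universal $c_0>0$ — a constant lower bound, not yet of the required form $1-C\eps$. The plan is to amplify this through the peeling exploration: on $\{\abs{M}<\eps\mu\}$ every chunk, and hence every residual maximum $\abs{M^{(i)}}$, has size $<\eps\mu$, so one obtains $\asymp 1/\eps$ successive steps each of which, conditionally on the past and \emph{provided the residual mean $\mu^{(i)}$ is still comparable to $\mu$}, fails an event of probability at least $1/K$; chaining these conditionally independent failures gives $\p_p(\abs{M}<\eps\mu)\le(1-1/K)^{\Omega(1/\eps)}\le C\eps$. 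I expect the main obstacle throughout to be precisely this bookkeeping: in the lower tail one must rule out (or handle separately) the possibility that peeling off the first few clusters, even when they meet $\Lambda$ only in few vertices, deletes a "bridge" of $G$ and collapses $\mu^{(i)}$; and in the upper tail one must prevent the explored fragment $H_\sigma$ from being volumetrically enormous. Both are instances of the same phenomenon — a cluster can be large in $G$ while meeting $\Lambda$ only a little — and controlling it, again via the combinatorial sandwich applied now to the residual instances, is the technical heart of the argument.
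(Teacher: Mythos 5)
This statement is Theorem 2.2 of the reference \cite{uni-tightness}: the present paper imports it without proof, so there is no in-paper argument to compare against. I will therefore assess your proposal on its own terms.

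Your structural setup is sound: the sandwich $\abs{M}^2 \le \sum_{v\in\Lambda}\abs{K_v\cap\Lambda}=\sum_C\abs{C\cap\Lambda}^2 \le \abs{M}\abs{\Lambda}$ is correct and useful, the cluster-peeling exploration with its monotonicities $\mu^{(i)}\le\mu$ and $\e[\abs{C_{i+1}}\mid\mathcal F_i]\le\mu^{(i)}$ is legitimate, and the observation that a second-moment bound $\e\abs{M}^2\le K\mu^2$ plus Paley--Zygmund gives a constant-probability lower bound is fine as far as it goes. However, both tails as you have sketched them hinge on steps that you flag as "the technical heart" and then propose to close with the combinatorial sandwich, and this is precisely where the argument breaks.

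For the upper tail, the union bound produces the factor $\abs{V(H_\sigma)}$, and the sandwich gives no control over it: the sandwich relates $\abs{M}$, $\abs{\Lambda}$, and $\Lambda$-intersection counts, and says nothing about how many vertices of $V$ the explored fragment occupies. Indeed $\abs{H_\sigma\cap\Lambda}=t$ while $\abs{V(H_\sigma)}$ can be arbitrarily large (or infinite), since a cluster can be huge while meeting $\Lambda$ only a little --- exactly the phenomenon you name. Applying the sandwich "to the fragment $H_\sigma$" reduces to the tautology $t^2\le t\cdot t$ and gains nothing. Without a volume bound, the stopped-exploration decoupling does not close. The way this is actually handled (and I believe how the cited paper handles it) is to avoid volume entirely by working inside $\omega$: if some cluster $K$ has $\abs{K\cap\Lambda}\ge ct$, pass to a spanning tree of $K$, find a centroid vertex whose deletion breaks the tree into subtrees each containing at most half the marked vertices, group these into two edge-disjoint connected open subgraphs each meeting $\Lambda$ in $\ge t$ points, and apply the van~den~Berg--Kesten inequality to conclude $\p_p(\abs{M}\ge ct)\le\p_p(\abs{M}\ge t)^2$. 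Starting from the Markov bound $\p_p(\abs{M}\ge 2\mu)\le 1/2$ and iterating gives the exponential tail, with no exploration and no volume control needed. Your exploration-based route, by contrast, inherits the volume problem intrinsically: the fresh cluster of $\partial H_\sigma$ is connected only through $H_\sigma$, so in the residual graph $G_\sigma$ it fragments into potentially many pieces, and the best you can say is that one of them meets $\Lambda$ in $\ge t/\abs{\partial H_\sigma}$ points --- which again brings back the uncontrolled boundary size.

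For the lower tail, the amplification idea is natural, but the conditioning step $\p_p(\abs{C_{i+1}}\ge\eps\mu\mid\mathcal F_i)\ge 1/K$ requires a lower bound on $\mu^{(i)}$, not just the upper bound $\mu^{(i)}\le\mu$ that you have. You correctly identify that peeling off a small chunk $C_i$ can delete a bridge and collapse $\mu^{(i)}$, and you again appeal to the sandwich to rule this out; but the sandwich only yields an average statement ($\mu\le\e\abs{C_1}+\e\mu^{(1)}$, say), which leaves open a small-probability event on which $\mu^{(1)}$ is tiny, and such events compound over $\sim 1/\eps$ rounds --- exactly the scale you are trying to control. Some genuinely different input is needed here, and the proposal does not supply it.

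In short: the scaffolding is reasonable, but both tails have an unclosed gap that your suggested fix (the combinatorial sandwich) cannot repair, because it controls the wrong quantity. The BK/centroid mechanism for the upper tail is a genuinely different, and strictly cleaner, route that sidesteps the volume problem from the start.
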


\begin{proof}[Proof of \Cref{lem:molecules:attaching_clusters}]
  We may assume that $\eps \leq \delta$.
	 We may also assume that $H$ has no isolated points, so that we have the bound $\abs{V(H)} \leq 2\abs{E(H)} \leq \frac{2}{\eps}$. When $H$ activates $\beta$ we must have that $\den{\bigcup_{u \in V(H)} K_u} \geq \beta$ and hence by the pigeonhole principle that there exists $u \in V(H)$ such that \[\den{K_u} \geq \frac{\beta}{\abs{V(H)}} \geq \frac{\alpha\eps}{2}.\] It follows that there exists a \emph{fixed} vertex $u \in V(H)$ such that
	\begin{equation}
		\p_p \bra{ \act_\beta(H) \cap \cubra{ \den{K_u} \geq \frac{\alpha\eps}{2}} } \geq \frac{1}{|V(H)|}\p_p \bra{ \act_\beta(H) } \geq   \frac{\eps}{\abs{V(H)}} \geq \frac{\eps^2}{2}.
	\end{equation}

	Since $H$ has no isolated points, $u$ is the endpoint of some edge $e \in E(H)$. Let $v$ be the other endpoint of $e$, let $N$ be the set of neighbours of $v$ in $G$, and let $X$ be an $\omega$-connected subset of $N$ (i.e.\! a subset of $N$ that is contained in a single $\omega$-cluster) of maximum size. Since $p\geq p_c(\alpha,\delta)$ and $G$ is vertex-transitive,
	\begin{equation}
		\e_p \abs{X} \geq \e_p \left[ \abs{K_1 \cap N} \right] = \abs{N} \e_p \den{K_1} \geq \alpha \delta d.
	\end{equation}
	 Applying \Cref{thm:molecules:uni_tightness} it follows that there exists a positive constant $c_1=c_1(\alpha,\delta,\eps)$ such that $\p_p \bra{ \abs{X} \leq c_1 d} \leq \frac{\eps^2}{4}$ and hence by a union bound that
	\begin{equation}
	\label{eq:Xlarge}
		\p_p \bra{ \act_{\beta}(H) \cap \cubra{ \den{K_u} \geq \frac{\alpha\eps}{2}} \cap \cubra{ \abs{X} \geq c_1 d } } \geq \frac{\eps^2}{4}.
	\end{equation}
	
	\medskip

	To obtain a similar bound with $\abs{K_v}$ in place of $\abs{X}$, we use insertion tolerance to open an edge connecting $v$ to $X$, which forces $\abs{K_v} \geq \abs{X}$. Unfortunately we cannot argue this way directly since opening this edge may produce a cluster with density at least $\beta$, in which case $\act_{\beta}(H)$ would no longer hold. To avoid this issue, we first claim that there exists a constant $C_1=C_1(\alpha,\delta,\eps)$ such that if $|V|\geq C_1$ then
	\begin{equation} \label{eq:molecules:attaching_clusters:not_act}
		\p_p \bra{ \act_{\beta}(H) \cap \cubra{ \den{K_u} \geq \frac{\alpha\eps}{2}} \cap \cubra{ \abs{X} \geq c_1 d } \cap \act_{\beta}(vX)^c } \geq \frac{\eps^2}{8},
	\end{equation}
	where $vX$ denotes the set of edges with one endpoint equal to $v$ and the other in $X$.
	(Note that since $X$ is $\omega$-connected, each edge in $vX$ individually activates $\beta$ if and only if the entire set $vX$ activates $\beta$.)
	Indeed, suppose that \eqref{eq:molecules:attaching_clusters:not_act} does not hold. We have by \eqref{eq:Xlarge} and a union bound that 
	\[\p_p\bra{ \cubra{ \abs{X} \geq c_1 d} \cap \act_{\beta}(vX)} \geq \frac{\eps^2}{8}\] and hence that
	\[
		\sum_{e \in E: \; e \ni v} \p_p \bra{ \act_{\beta}(e) } \geq \e_p \left[\abs{X} \mathbbm 1_{ \act_{\beta}(vX) }\right] \geq \frac{c_1 \eps^2}{8} d.
	\]
	Applying Russo's formula and using that $G$ is vertex-transitive, it follows that
	\[
		 \p_p' \bra{ \den{K_1} \geq \beta } \geq \frac{c_1 \eps^2}{16} d\abs{V} .
	\] 
	 Since $p \geq p_c(\alpha_0,\alpha_0)$ we also have that $p\geq 1/2d$ by \cref{lem:pc_lower} and hence that
	\[
		p \cdot \p_p' \bra{ \den{K_1} \geq \beta } \geq \frac{c_1 \eps^2}{32} \abs{V}.
	\] 
	Since $\eps \leq \delta$, this contradicts the hypothesis that $p\in Q$ whenever $\abs{V} \geq C_1:=128/(c_1 \eps^3)$, completing the proof of the claim.

\medskip

	 There are finitely many graphs with $\abs{V} \leq C_1$, hence the lemma holds trivially in this case and we may assume without loss of generality that \eqref{eq:molecules:attaching_clusters:not_act} holds. Since we have that $p \geq 1/2d$ as above, we can use insertion tolerance \cref{prop:insertion-tolerance} to open an edge in $vX$ in the event appearing on the left hand side of \eqref{eq:molecules:attaching_clusters:not_act}, giving that
	\[
		\p_p \bra{ \act_{\beta}(H) \cap \cubra{ \den{K_u} \geq \frac{\alpha\eps}{2}} \cap \cubra{ \abs{K_v} \geq c_1 d } } \geq \eta,
	\]
 for some $\eta=\eta(\alpha,\delta,\eps)>0$ as claimed.
\end{proof}

Once we have many copies of $H$ that activate $\beta$ and have large clusters stuck to the copies of $u$ and $v$, we use the following easy fact about equivalence relations to deduce that the copies of $e$ are well-connected to each other. More precisely, we will use this lemma to show that we can find many large disjoint sets of copies of the edge $e$ in which any two copies $\gamma_1(e)$ and $\gamma_2(e)$ of $e$ are $\omega$-connected by paths $\gamma_1(u) \leftrightarrow \gamma_2(u)$ and $\gamma_1(v) \leftrightarrow \gamma_2(v)$.

\begin{lem} \label{lem:molecules:sets}
	Let $X$ be a non-empty finite set, let $\sim$ be an equivalence relation on $X$, and let $Y \subset X$ be a non-empty subset of $X$. For each $x\in X$ write $[x]$ for the equivalence class of $x$ under $\sim$. 
	  If $\min_{y \in Y} \abs{ [y] } \geq \frac{2\abs{X}}{\abs{Y}}$ then there exists a collection $( Z_i )_{i \in I}$ of disjoint subsets of $Y$ such that each $Z_i$ is contained in an equivalence class of $\sim$ and that satisfies the inequalities
	\[
		\abs{Z_i} \geq \frac{\abs{Y}}{2\abs{X}} \min_{y \in Y} \abs{ [y] } \qquad \text{and} \qquad \abs{I} \geq \frac{\abs{X}}{4 \min_{y \in Y} \abs{ [y] } }.
	\]
\end{lem}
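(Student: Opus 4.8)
The plan is to build the $Z_i$ greedily by slicing each equivalence class (intersected with $Y$) into blocks of a single, well-chosen common size $s$. First I would set $m := \min_{y\in Y}\abs{[y]}$ and let $C_1,\dots,C_N$ enumerate the equivalence classes of $\sim$ that meet $Y$. Each such $C_j$ contains a vertex $y \in Y$ with $\abs{[y]}\geq m$, so $\abs{C_j}\geq m$, and since the $C_j$ are disjoint subsets of $X$ this forces $N\leq \abs{X}/m$. Writing $n_j:=\abs{C_j\cap Y}$ we have $\sum_{j=1}^{N} n_j=\abs{Y}$. The hypothesis $m\geq 2\abs{X}/\abs{Y}$ says exactly that $t:=\frac{\abs{Y}m}{2\abs{X}}\geq 1$, so I would take the block size to be the integer $s:=\lceil t\rceil$. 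The two elementary facts about $s$ I will use are $s\leq 2t=\frac{\abs{Y}m}{\abs{X}}$ (valid because $t\geq 1$) and $s-1<t$.

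Next, from each $C_j\cap Y$ I would pull out $\lfloor n_j/s\rfloor$ pairwise disjoint blocks of size exactly $s$, and let $(Z_i)_{i\in I}$ be the list of all of these across all $j$. By construction these are pairwise disjoint subsets of $Y$, each is contained in a single equivalence class, and each has size $s\geq t=\frac{\abs{Y}}{2\abs{X}}\min_{y\in Y}\abs{[y]}$, which is the first asserted inequality. The remaining task is the lower bound $\abs{I}=\sum_{j=1}^{N}\lfloor n_j/s\rfloor\geq \frac{\abs{X}}{4m}$.

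For the count, I would first discard the classes that contribute nothing: if $n_j<s$ then $\lfloor n_j/s\rfloor=0$, and the total $Y$-mass carried by such classes is at most $(s-1)N\leq (s-1)\frac{\abs{X}}{m}< t\cdot\frac{\abs{X}}{m}=\frac{\abs{Y}}{2}$, so that $\sum_{j:\,n_j\geq s} n_j>\abs{Y}/2$. For each surviving $j$, writing $q=\lfloor n_j/s\rfloor\geq 1$, the inequality $n_j<(q+1)s\leq 2qs$ gives $q>n_j/(2s)$. Summing over the surviving indices,
\[
\abs{I}\;>\;\frac{1}{2s}\sum_{j:\,n_j\geq s}n_j\;>\;\frac{\abs{Y}}{4s}\;\geq\;\frac{\abs{Y}}{4}\cdot\frac{\abs{X}}{\abs{Y}m}\;=\;\frac{\abs{X}}{4m},
\]
using $s\leq\frac{\abs{Y}m}{\abs{X}}$ in the last step. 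This is the second asserted inequality.

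I expect the only genuinely delicate point to be the constant bookkeeping in the last two paragraphs: because there is a factor $2$ in the hypothesis and a factor $2$ in the required block size, one must be careful to use the sharp estimates $s\leq\frac{\abs{Y}m}{\abs{X}}$ and $s-1<\frac{\abs{Y}m}{2\abs{X}}$ rather than the crude $s\leq\frac{\abs{Y}m}{2\abs{X}}+1$, as the latter makes the two factors cancel and yields only the vacuous bound $\abs{I}\geq 0$. Everything else is routine; in particular the degenerate case $s=1$ is harmless, since then every class meeting $Y$ survives and $\abs{I}=\abs{Y}\geq\abs{X}/(4m)$ directly.
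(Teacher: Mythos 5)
Your proof is correct; I checked each step and the arithmetic (in particular the care you take to use $s\leq 2t=\frac{\abs{Y}m}{\abs{X}}$ and $s-1<t$ rather than the sloppier $s\leq t+1$) holds up. The strict inequality $\abs{I}>\frac{\abs{X}}{4m}$ of course implies the non-strict one in the statement.

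Your route is recognisably of the same genre as the paper's (both choose a common block size of order $\frac{\abs{Y}m}{2\abs{X}}$, argue that discarding makes you lose at most half of $Y$, and then extract the count from the surviving half), but the filtering criterion is genuinely different. The paper filters pointwise on the \emph{density} of $Y$ in each class, discarding $Y_-:=\{y\in Y:\abs{[y]\cap Y}\leq\frac{\abs{Y}}{2\abs{X}}\abs{[y]}\}$ and bounding $\abs{Y_-}$ via the global identity $\sum_{y\in Y}\frac{\abs{[y]}}{\abs{[y]\cap Y}}\leq\abs{X}$; the final count is then obtained by a maximal-packing argument rather than explicit enumeration. You instead filter on the absolute size $n_j=\abs{C_j\cap Y}$ relative to the block size $s$, and the waste bound $(s-1)N\leq(s-1)\abs{X}/m<\abs{Y}/2$ uses the class-count bound $N\leq\abs{X}/m$ rather than a density sum. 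Your extraction of the count is then fully explicit ($\sum_j\lfloor n_j/s\rfloor$) rather than by maximality. Each approach has its appeal: the paper's density filter is conceptually clean and dovetails with a slick one-line summation bound, while your version is more elementary and makes the block count concrete without any packing argument. Both are about the same length; neither seems more general than the other for this particular statement.
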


\begin{proof}[Proof of \cref{lem:molecules:sets}]
	Write $m=\min_{y \in Y} \abs{ [y] }$ and define 
	\[
		Y_- := \cubra{ y \in Y : \abs{ [y] \cap Y } \leq \frac{\abs{Y}}{2\abs{X}} \abs{ [y] }  } 
		\qquad \text{ and } \qquad Y_+ := Y \setminus Y_-.
	\]
	Observe that if $\sC$ denotes the set of equivalence classes of $\sim$ then
	\[
	\frac{2|X|}{|Y|}|Y_-| \leq \sum_{y\in Y} \frac{|[y]|}{|[y]\cap Y|} = \sum_{C \in \sC} |C| \mathbbm{1}(C \cap Y \neq \emptyset) \leq |X|,
	\]
	so that $|Y_-|\leq \frac{1}{2}|Y|$ and $\abs{Y_+} \geq \frac{1}{2}\abs{Y}$.
	Let $(Z_i)_{i \in I}$ be a maximal collection of disjoint subsets of $Y_+$ such that each $Z_i$ is contained in an equivalence class of $\sim$ and $\abs{Z_i} = \left\lceil \frac{\abs{Y}}{2\abs{X}} m \right\rceil$ for each $i \in I$. Every element $y \in Y_+$ has
	\[
		\abs{[y] \cap Y} \geq \left \lceil \frac{\abs{Y}}{2\abs{X}} \abs{ [y] }\right\rceil \geq \left\lceil\frac{\abs{Y}}{2\abs{X}} m\right\rceil.
	\]
	So, by maximality, the union $\bigcup_{i \in I} Z_i$ contains at least half the elements in $[y] \cap Y$ for every $y \in Y_+$. So the union $\bigcup_{i \in I} Z_i$ contains at least half the elements in $Y_+$, and we deduce that
	\[
		\abs{I} \geq \frac{\abs{Y}/4}{ \left\lceil \frac{\abs{Y}}{2\abs{X}} m \right\rceil } \geq \frac{\abs{X}}{4m},
	\]
	where we used the hypothesis $m \geq \frac{2\abs{X}}{\abs{Y}}$ in the final inequality.
\end{proof}

We are now ready to complete the proof of  \cref{lem:molecules:shrinking_activator}.
The final step to reduce the number of edges in $H$ is to open a small number of these well-connected copies of $e$ by slightly increasing the percolation parameter. When $H$ activates $\beta$ and $u \leftrightarrow v$, $H\setminus \{ e\}$ also activates $\beta$. Since the copies of $e$ are well-connected, opening this small number of edges is actually sufficient to ensure a positive proportion of the copies of $H \setminus \{ e \}$ activate $\beta$. By linearity of expectation, we conclude that at this higher parameter the set $H \setminus \{ e \}$ activates $\beta$ with good probability, as required.

\begin{proof}[Proof of \cref{lem:molecules:shrinking_activator}]
Since $\alpha,\delta \geq \alpha_0$ we have by \Cref{lem:molecules:finding_parameters} and \Cref{lem:pc_lower} that
\[\mathcal L(Q) \geq \frac{1}{2}{\mathcal L(I)} = \frac{1}{2}(p_c(\beta,1-\delta) - p_c(\beta,\delta)) \geq \frac{1}{2}(e^\delta-1) p_c(\beta,\delta) \geq \frac{\delta}{4d},\]
where we used that $e^\delta-1 \geq \delta$ in the final inequality.
Fix $\eps>0$ and $n \geq 1$, and suppose that $H$ is a finite subgraph of $G$ with $|E(H)|\leq \eps^{-1}$ such that $\p_{p_n} \bra{ \act_\beta(H) } \geq \eps$. By \Cref{lem:molecules:attaching_clusters}, we can find $\eps_1=\eps_1(\alpha,\delta,\eps)$ and an edge $e \in E(H)$ with endpoints $u$ and $v$ such that the event
\[
	\mathcal A := \act_\beta(H) \cap \{ \den{K_u} \geq \eps_1 \} \cap \{ \abs{K_v} \geq \eps_1 d \} \qquad \text{ satisfies $\p_{p_n} \bra{\mathcal A} \geq \eps_1$.}
\]
For each $x \in V$, pick an automorphism $\phi_x \in \operatorname{Aut} G$ such that $\phi_x(v) = x$, so that $\phi_x(u)$ is a neighbour of $x$ for each $x\in V$. These maps exist because $G$ is vertex-transitive. Define $X := \{ x \in V : \phi_x^{-1}(\omega) \in \mathcal A \}$. We have by linearity of expectation that $\e_{p_n} \den{X} = \p_{p_n} \bra{ \mathcal A } \geq \eps_1$ and hence by Markov's inequality that $\p_{p_n} ( \den{X} \geq \frac{1}{2}\eps_1 ) \geq \frac{1}{2}\eps_1$. 
We will now prove that we can take $N=\lceil \log_2(8/\eps_1)\rceil$, so assume from now on that $n\geq \log_2(8/\eps_1)$.

\medskip

Write $\mathcal B := \{ \den{X} \geq \frac{1}{2}\eps_1 \}$ and consider an arbitrary configuration $\omega \in \mathcal B$. Every $\phi_x(u)$ with $x \in X$ has $\|K_{\phi_x(u)}\| \geq \eps_1$. So, by the pigeonhole principle, we can find a subset $Y \subseteq X$ with $\den{Y} \geq \frac{1}{2}\eps_1^2$ such that $\{ \phi_x(u) : x \in Y \}$ is contained in a single cluster of $\omega$. By definition of $X$ and $\cA$, every vertex $y \in Y$ has $ \abs{K_y} \geq \eps_1 d$.

\medskip

We next claim that there exists a constant $C_1=C_1(\alpha,\delta,\eps)$ such that if $|V|\geq C_1$ then $\eps_1 d |Y| \geq 4|V|$. Indeed, since $\den{Y}\geq \frac{1}{2}\eps_1^2$, if this inequality does not hold then we must have that $d\leq 8 \eps_1^{-3}$. 
In this case, since $p_n\in Q$ and $p_n\geq 1/2d$, and since every edge has $\operatorname{Aut}G$-orbit of size at least $\abs{V}/2$ by transitivity, it follows from \cref{thm:KKL_subpolynomial} that there exists a constant $c_1=c_1(\alpha,\delta,\eps)$ such that
\[
\frac{4}{\delta} \geq p_n \cdot \p_{p_n}'(\|K_1\|\geq \beta) \geq c_1 \log |V|,
\]
which rearranges to give the claim. Since graphs with $|V| < C_1$ can be handled trivially, we may assume throughout the rest of the proof that $|V| \geq C_1$  and hence that $\eps_1 d |Y|\geq 4|V|$.

\medskip

  By construction, every vertex $y \in Y$ has $\abs{K_y} \geq \eps_1 d$. So by splitting clusters, we can find an equivalence relation that is a refinement of $\xleftrightarrow{\omega}$ in which the equivalence class of each vertex $y \in Y$ has between $\eps_1 d/2$ and $\eps_1 d$ total vertices. We now apply \Cref{lem:molecules:sets} to this equivalence relation with the sets $V$ and $Y$ in place of $X$ and $Y$. (The hypothesis of \Cref{lem:molecules:sets} is met because $\eps_1 d \abs{Y} \geq 4 \abs{V}$.) This yields a collection of disjoint $\omega$-connected subsets $(Z_r)_{r \in R}$ of $Y$ such that for every $r$,
\[
	\abs{Z_r} \geq \frac{\eps_1 d\abs{Y}}{4 \abs{V}}\geq \frac{\eps_1^3d}{8}  \qquad \text{and} \qquad \abs{R} \geq \frac{\abs{V}}{4 \eps_1 d} \geq \frac{\abs{V}}{4 d}.
\]
Whenever $H$ activates $\beta$ and $u \leftrightarrow v$, $H \setminus \{ e \}$ also activates $\beta$. On the event $\mathcal B$ we must have that $x \leftrightarrow y$ and $\phi_x(u) \leftrightarrow \phi_{y}(u)$ whenever $x,y$ both belong to $Z_r$ for some $r\in R$. Thus, on the event $\mathcal B$, if there exist  $r \in R$ and $x \in Z_r$ such that $x \leftrightarrow \phi_{x}(u)$ then $\phi_{x'}(H \setminus \{ uv \})$ activates $\beta$ for \emph{every} $x' \in Z_r$. 

\medskip

In view of this fact, our next step will be to increase the percolation parameter to open an edge $\phi_x(e)$ with $x \in Z_r$ for a positive proportion of the indices $r \in R$, thus making a positive proportion of the copies $\phi_{x} (H \setminus \{ e \})$ with $x \in \bigcup_{r \in R} Z_r$ activate $\beta$.

\medskip

Let $m>n \geq \lceil \log_2(8/\eps_1)\rceil$, and let $\p$ be the joint law of the standard monotone coupling $(\omega, \omega')$ of percolation with the two parameters $p_n\leq p_{m}$.
It suffices to prove that there exists a constant $\eta_n=\eta_n(\alpha,\delta,\eps) > 0$ such that
\begin{equation}
\label{eq:etan_target}
\p_{p_m}(\act_\beta(H\setminus\{e\})) \geq \eta_n.
\end{equation}
Recall that the increasing sequence $(p_n)\seq$ was defined so that
\[
p_{m}-p_n \geq 3^{-(n+1)} \cL(Q) \geq 3^{-(n+1)} \frac{\delta}{4d}\] and \[\p_{p_{m}}(\|K_1\|\geq \beta)-\p_{p_{n}}(\|K_1\|\geq \beta) \leq 2^{-n+1}
\]
for every $m>n$. The assumption that $m>n \geq \log_2(8/\eps_1)$ implies that $\p_{p_{m}}(\|K_1\|\geq \beta)-\p_{p_{n}}(\|K_1\|\geq \beta) \leq \frac{\eps_1}{4}$, and since $\p_{p_n} \bra{ \mathcal B } \geq \frac{\eps_1}{2}$ a union bound gives that 
\begin{align}
	\p \bra{ \omega \in \mathcal B  \text{ and }  \|K_1 (\omega')\| < \beta } &\geq \p_{p_n} \bra{ \mathcal B } - \p \bra{ \|K_1(\omega)\| < \beta \leq \|K_1 (\omega')\|  } \nonumber
	\\&
	=\p_{p_n} \bra{ \mathcal B } - \\&\quad\left(\p_{p_m} \bra{ \|K_1(\omega)\| \geq \beta}-\p_{p_n} \bra{ \|K_1(\omega)\| \geq \beta}\right)\nonumber
	\\&\geq \frac{\eps_1}{2} - \frac{\eps_1}{4} = \frac{\eps_1}{4},
\end{align}
and hence in particular that
\begin{equation} \label{eq:molecules:induction:avoid_giant}
	\p \bra{ \|K_1 (\omega')\| < \beta \mid \omega \in \mathcal B } \geq \frac{\eps_1}{4}.
\end{equation}

\medskip

Consider an arbitrary configuration $\xi \in \mathcal B$ and let $(Z_r)_{r \in R}$ be a collection of sets as defined via \cref{lem:molecules:sets} above, which we take to be a function of $\xi$. (Note in particular that the index set $R$ depends on $\xi$.) For each  $r \in R$ and $x \in Z_r$ we have that
\[
	\p \bra{ \phi_x(e) \in \omega' \mid \omega = \xi } \geq \frac{p_m - p_n}{1-p_n} \geq 3^{-(n+1)} \frac{\delta}{4d},
\]
 and since $\abs{ \{ \phi_x(e) : e \in Z_r \} } \geq \frac{1}{2}\abs{Z_r} \geq \frac{\eps_1^3}{16} d$ (where the factor of $1/2$ accounts for the distinction between oriented and unoriented edges, with $\phi_x$ acting bijectively on the former), we deduce that there exists $\eps_2=\eps_2(\alpha,\delta,\eps)>0$ such that
\begin{align}
	\p \bra{ \exists x \in Z_r : \; \phi_x(e) \in \omega' \mid \omega = \xi } &\geq 1 - \bra{ 1 - 3^{-(n+1)}\frac{\delta}{4d} }^{\eps_1^3d/16} \nonumber
	\\
	&\geq 1-\exp\left[-3^{-(n+1)} \frac{\eps_1^3\delta}{64}\right] \geq \eps_2 3^{-n},
\end{align}
where we used the inequality $1-x\leq e^{-x}$ in the second line. Condition on the event $\omega = \xi$ and consider the random set $J := \{ r \in R : Z_r \xleftrightarrow{\omega'} Y \}$. Note that for all $r_1,r_2 \in R$ with $Z_{r_1}\not\xleftrightarrow{\omega} Y$ and $Z_{r_2} \not\xleftrightarrow{\omega} Y$, we have $\{\phi_x(e) : x \in Z_{r_1}\} \cap \{\phi_x(e) : x \in Z_{r_2}\} = \emptyset$. So by independence, it follows that there exists a constant $C_2=C_2(\alpha,\delta,\eps)$ such that if $|R|\geq C_2 3^n$ then
\begin{equation} \label{eq:molecules:induction:help_lots}
	\p \bra{ \abs{J} \geq \frac{\eps_2}{2} 3^{-n}\abs{R} \bigm| \omega = \xi } \geq 1 - \frac{\eps_1}{8}.
\end{equation}

\medskip

We will now proceed by case analysis according to whether $|R(\xi)|\geq C_2 3^n$ for every $\xi \in \cB$ or $|R(\xi)|<C_2 3^n$ for some $\xi \in \cB$.
First assume that $\abs{R(\xi)} \geq C_2 3^n$ for every $\xi \in \mathcal B$. On the event ${\omega \in \mathcal B}$, we write $(Z_r)_{r \in R}$ and $J$ for the corresponding sets defined \emph{with respect to $\omega$}. We have by \eqref{eq:molecules:induction:avoid_giant} and \eqref{eq:molecules:induction:help_lots} that
\begin{equation} \label{eq:molecules:induction:helped__and_no_giant}
	\p \bra{ \abs{J} \geq \frac{\eps_2}{2}3^{-n} \abs{R}  \text{ and }  \| K_1( \omega' )\| < \beta \bigm| \omega \in \mathcal B } \geq \frac{\eps_1}{4} - \frac{\eps_1}{8} = \frac{\eps_1}{8},
\end{equation}
and hence that
\begin{equation} \label{eq:molecules:induction:helped__and_no_giant2}
	\p \bra{ \omega \in \mathcal B,\, \abs{J} \geq \frac{\eps_2}{2}3^{-n} \abs{R}, \text{ and } \| K_1( \omega' )\| < \beta } \geq  \frac{\eps_1^2}{16}.
\end{equation}
Consider a pair of configurations $(\omega,\omega')$ satisfying the event whose probability is estimated in \eqref{eq:molecules:induction:helped__and_no_giant2} and
 pick $r \in J$ and $y \in Z_r$. Since $\phi_{y}(H)$ activates $\beta$ in $\omega$ and $\|K_1(\omega')\| < \beta$, we also have that $\phi_{y}(H)$ activates $\beta$ in $\omega'$. By definition of $J$, we can find $x \in Z_r$ such that $\phi_{x}(e)$ is open in $\omega'$. 
Since $x,y \in Z_r$ we have by definition that $x$ and $y$ are connected in $\omega$, and since $x,y\in Y$ we have by definition of $Y$ that $\phi_x(u)$ and $\phi_y(u)$ are also connected in $\omega$.
  Since $\phi_x(e)$ is open in $\omega'$ and $\phi_x(v) = x$, we deduce that $x$, $y$, $\phi_x(u)$, and $\phi_y(u)$ all belong to the same cluster of $\omega'$.
Since $\phi_y(H)$ activates $\beta$ in $\omega'$ and  $y$ is connected to $\phi_y(u)$ in $\omega'$, we deduce that $\phi_y(H \setminus \{ e\})$ activates $\beta$ in $\omega'$ also. Since this holds for every $r \in J$ and $y \in Z_r$, we have
\[
	|W|:=\abs{ \cubra{ x \in X : \omega' \in \act_{\beta}( \phi_x(H \setminus \{ e \})  }} \geq \abs{J} \min_{r \in J} \abs{Z_r}.
\]
 Since $\abs{J} \geq \frac{\eps_2}{2} 3^{-n}\abs{R} \geq 3^{-n} \frac{\eps_2}{8} \frac{\abs{V}}{d}$ and every $Z_r$ has $\abs{Z_r} \geq \frac{\eps_1^3}{8} d$, we have that $\den{W} \geq \frac{\eps_1^3 \eps_2}{64} 3^{-n}$ and hence that
\[
	\e \den{W} \geq \frac{\eps_1^5 \eps_2}{2^{10}} 3^{-n}.
\]
Thus, it follows by vertex-transitivity that $\p_{p_m} \bra{ \act_{\beta}(H \setminus \{ e \} ) } \geq \frac{\eps_1^5 \eps_2}{2^{10}} 3^{-n}$. This bound has the required form, completing the proof of \eqref{eq:etan_target} in this case.

\medskip

It remains to consider the case that  $\abs{R(\xi)} < C_2 3^n$ for some $\xi \in \cB$. Since we always have $\abs{R} \geq \abs{V}/ 4 d$, we must have in this case that
\[
	|V| \leq 4 C_2 3^n d.
\]
Observe that if $\omega \in \mathcal B$ and $\den{ K_1 ( \omega' ) } < \beta$ then $\omega' \in \mathcal B$ also. Thus, by \eqref{eq:molecules:induction:avoid_giant} and the fact that $\p_{p_n} \bra{ \mathcal B } \geq \frac{\eps_1}{2}$, we have that
\[
	\p_{p_m} \bra{ \mathcal B } \geq \frac{\eps_1^2}{8}.
\]
On the event $\mathcal B$, pick one of the sets $Z_r$ that are guaranteed to exist by our earlier argument, call it $Z$, and let $U := \{ x \in Z : \; \omega \in \act_\beta(\phi_x(e)) \}$. If
\[
	\p_{p_m} \Bigl( \mathcal B \cap \bigl\{ \den{U} \geq \frac{1}{2} \den{Z} \bigr\}\Bigr) \geq \frac{1}{2} \p_{p_m} \bra{ \mathcal B}
\]
then we have by vertex-transitivity and the bound $|Z|\geq \eps_1^3 d/8 $ that
\[
	\p_{p_m} \bra{ \act_\beta( e ) } \geq \e_{p_m} \sqbra{ \den{U} \mathbbm{1}(\cB) } \geq \frac{\eps_1^3 d}{16 |V|} \cdot \frac{\eps_1^2}{16} \geq \frac{\eps_1^5}{2^{10} C_2} \cdot 3^{-n}
\]
as required.
Conversely, if
\[
	\p_{p_m} \Bigl( \mathcal B \cap \bigl\{ \den{U} \geq \frac{1}{2} \den{Z} \bigr\}\Bigr) \leq \frac{1}{2} \p_{p_m} \bra{ \mathcal B},
\]
then we use insertion-tolerance (\cref{prop:insertion-tolerance} - using the fact that $\den{Z} \geq \frac{\eps_1^3}{64 C_2 3^n}$) to open a single edge in $\{ \phi_{x}(e) : x \in Z \setminus U \}$ on the event $\mathcal B \cap \{ \den{U} \leq \frac{1}{2} \den{Z}\}$. This does not create a cluster with density at least $\beta$, since none of the edges $\phi_x(e)$ with $x \in Z \setminus U$ activates $\beta$. Arguing as before, after opening this edge, \emph{every} set $\phi_x(H \setminus \{ e \})$ with $x \in Z$ activates $\beta$. Thus, it follows by the same vertex-transitivity argument as above that there exists a positive constant $\eps_3(n)=\eps_3(\alpha,\delta,\eps,n)$ such that
\[
	\p_{p_m} \bra{ \act_\beta( H \setminus \{ e\} ) } \geq \eps_3.
\]
This completes the proof.
\end{proof}

\subsection{Completing the proof of the main theorems}
\label{subsec:Bollobas}

In this section we complete the proof of \Cref{thm:main} and hence of \Cref{thm:main_sparse}.

\begin{proof}[Proof of \cref{thm:main}]
The implication (i) $\Rightarrow$ (ii) follows immediately from \Cref{cor:main_sequence} and \cref{prop:molecules:main}, while the implication (iii) $\Rightarrow$ (iv) is trivial. As such, it remains only to prove (ii) $\Rightarrow$ (iii) and (iv) $\Rightarrow$ (i).
 
\medskip

We begin with the implication (ii) $\Rightarrow$ (iii), i.e., the claim that molecular sets admit linear $1/3$-separators. To see this, note that if $\mathcal H$ is $m$-molecular for some $m\geq 2$ then there exists a constant $C$ such that for each $G \in \mathcal H$ there is an automorphism-invariant set of edges $F$ such that $|F|\leq C|V(G)|$ and $F$ disconnects $G$ into $m \geq 2$ connected components each of size $|V(G)|/m$. If we take $A$ to be the union of $\lceil m/2 \rceil$ of these components then $|V(G)|/3 \leq |A| = \lceil m/2 \rceil |V(G)|/m \leq 2|V(G)|/3$ and $|\partial_E A|\leq |F|\leq C|V(G)|$ so that $\mathcal H$ has linear $1/3$-separators as claimed.

\medskip

We next prove the implication (iv) $\Rightarrow$ (i). We will use the following theorem \cite{dense} identifying the location of the critical threshold for (not necessarily transitive) dense graph sequences.

\begin{thm} [Bollob\'{a}s, Borgs, Chayes, Riordan 2010] \label{lem:dense-thresh}
	Let $(G_n)$ be a dense sequence of finite, simple graphs with $\abs{V(G_n)} \to \infty$, and for each $n\geq 1$ let $\lambda_n$ be the largest eigenvalue of the adjacency matrix of $G_n$. For each $c > 0$
  \[
    \lim_{n\to\infty} \p_{\lambda_n^{-1}}^{G_n}(\|K_1\|\geq c) =0,
  \]
  and for each $\eps > 0$ there exists $\delta > 0$ such that
  \[
    \lim_{n\to\infty} \p_{(1+\eps)\lambda_n^{-1}}^{G_n}(\|K_1\|\geq \delta) =1.
  \]
\end{thm}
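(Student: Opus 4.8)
\medskip
\noindent\textbf{Proof strategy for \cref{lem:dense-thresh}.}
The two assertions are of very different character. The first (subcritical) limit has a short, self-contained, quantitative proof by a spectral second-moment argument, and the plan is to give this directly. Write $A_n$ for the adjacency matrix of $G_n$, put $N=|V(G_n)|$, and fix $p = (1-\eps)\lambda_n^{-1}$. A union bound over self-avoiding paths gives $\e_p|K_v| \leq \sum_{k\geq 0} p^k (A_n^k \mathbf 1)_v = ((\mathrm{Id}-pA_n)^{-1}\mathbf 1)_v$, where the Neumann series converges because $pA_n$ has spectral radius $p\lambda_n = 1-\eps < 1$. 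Since the eigenvalues of $A_n$ lie in $[-\lambda_n,\lambda_n]$, those of $\mathrm{Id}-pA_n$ lie in $[\eps,2-\eps]$, so $\|(\mathrm{Id}-pA_n)^{-1}\|_{\mathrm{op}} \leq \eps^{-1}$ and hence, by Cauchy--Schwarz, $\sum_{v} \e_p|K_v| \leq \mathbf 1^\top(\mathrm{Id}-pA_n)^{-1}\mathbf 1 \leq N/\eps$. As $\sum_v |K_v| = \sum_{\text{clusters }C}|C|^2 \geq |K_1|^2$, this gives $\e_p|K_1|^2 \leq N/\eps$, and Markov's inequality yields $\p_p(\|K_1\|\geq c) \leq 1/(\eps c^2 N) \to 0$ for every $c>0$. (This is the natural dense analogue of \cref{lem:pc_lower} and needs no limiting or compactness input.)

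For the second (supercritical) limit the plan is to appeal to the description of the phase transition for dense graph sequences developed in \cite{dense}. First one notes that density forces $\lambda_n = \Theta(N)$: indeed $\lambda_n \geq \mathbf 1^\top A_n \mathbf 1/N = 2|E(G_n)|/N$, which is at least a fixed multiple of $N$ by the density hypothesis, while $\lambda_n < N$ always holds; in particular $p_n^{+} := (1+\eps)\lambda_n^{-1} = \Theta(1/N)$. Observe next that $G_n(p_n^{+})$ \emph{coincides} with the inhomogeneous random graph on $N$ vertices associated with the bounded kernel $\tfrac{(1+\eps)N}{\lambda_n}W_{G_n}$, where $W_{G_n}$ is the graphon of $G_n$ (the edge probabilities match exactly since this kernel is everywhere $<N$). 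Passing to a subsequence, one may assume $\lambda_n/N\to\theta\in(0,1]$ and, by compactness of the space of graphons, that these kernels converge — in the sense required by \cite{dense} — to a limiting kernel $\kappa$ with operator norm $\|T_\kappa\| = \tfrac{1+\eps}{\theta}\|T_W\| = 1+\eps>1$, where $T_\kappa f(x):=\int_0^1\kappa(x,y)f(y)\,\mathrm dy$ and $W$ is the graphon limit of $(G_n)$ along the subsequence, normalised so that $\|T_W\|=\theta$. The inhomogeneous-random-graph analysis of \cite{dense} then shows that in this supercritical regime $\|K_1\|$ converges in probability to the survival probability $\rho(\kappa):=\int_0^1\rho_\kappa(x)\,\mathrm dx$ of the associated multitype branching process, and that $\rho(\kappa)>0$ precisely because $\|T_\kappa\|>1$. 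Since this holds along every subsequence and always produces a positive limit, a routine subsequence/contradiction argument upgrades it to the existence of a single $\delta=\delta(\eps,(G_n))>0$ with $\liminf_n \p_{p_n^{+}}(\|K_1\|\geq\delta)=1$, which is the claim.

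The main obstacle is the supercritical half, and essentially all of its difficulty is imported from \cite{dense}: one needs not merely a lower bound on the upper quantiles of $\|K_1\|$ but genuine concentration, which requires the full inhomogeneous-random-graph toolkit (exploration of local neighbourhoods to locate the branching-process approximation, a sprinkling/sewing-up argument showing that the many large clusters coalesce into one, and a uniqueness argument), none of which is elementary and none of which I would attempt to reprove here. A secondary, bookkeeping-level point worth spelling out is that $\rho(\kappa)$ genuinely depends on the graphon limit of the sequence — for instance, for a sequence converging to a graphon supported on a small block, the giant has small but positive density — so $\delta$ must be allowed to depend on $(G_n)$ and cannot be made uniform over all dense sequences; obtaining a single $\delta$ valid for the fixed sequence along \emph{all} of $n$, rather than along a subsequence, is exactly what the contradiction step above is for. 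One could also bypass this bookkeeping by quoting the quantitative critical-window results of \cite{dense} directly, which already give $p_c(G_n)=(1+o(1))\lambda_n^{-1}$ together with a positive limiting giant density at $(1+\eps)\lambda_n^{-1}$.
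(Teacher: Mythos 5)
The paper does not prove this statement: it is quoted verbatim as a black-box citation of Bollob\'as, Borgs, Chayes, and Riordan \cite{dense}, with no proof supplied. So there is nothing in the paper to compare your argument against line by line; the relevant question is simply whether your sketch is sound and whether it adds anything over a bare citation.

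Your subcritical half is correct, self-contained, and a genuine improvement over just citing \cite{dense}. The chain
\[
\e_p|K_v| \;\le\; \sum_{k\ge 0} p^k (A_n^k \mathbf 1)_v \;=\; \bigl((\mathrm{Id}-pA_n)^{-1}\mathbf 1\bigr)_v,
\]
the Rayleigh-quotient bound on the spectrum of $\mathrm{Id}-pA_n$, Cauchy--Schwarz, and the identity $\sum_v |K_v| = \sum_C |C|^2 \ge |K_1|^2$ all go through, giving $\p_p(\|K_1\|\ge c)\le (\eps c^2 N)^{-1}\to 0$. Note this part does not even use density, and it is the exact dense analogue of the paper's \cref{lem:pc_lower}.

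Your supercritical half is in substance a summary of the argument in \cite{dense}, which is what the paper itself relies on. The one place you are being optimistic is the final ``routine subsequence/contradiction argument'': passing to graphon-convergent subsequences shows $\|K_1\|\to\rho(\kappa)>0$ in probability along each such subsequence, but to conclude the existence of a single $\delta>0$ valid for the whole sequence you also need that the limiting densities $\rho(\kappa)$ are bounded away from $0$ over all graphon subsequential limits of $(G_n)\seq$, and you do not justify that. (It is true here: density forces $\|T_W\|\ge \int W \ge c'>0$ and $\|T_W\|\le 1$, so the normalised kernel $\kappa=(1+\eps)W/\|T_W\|$ ranges over a compact family on which $\rho(\kappa)$ is continuous and strictly positive; but that compactness-plus-continuity step deserves a sentence.) Your closing remark --- that one can instead quote the quantitative critical-window statements of \cite{dense} directly, which give $p_c(G_n)=(1+o(1))\lambda_n^{-1}$ together with a positive limiting giant density at $(1+\eps)\lambda_n^{-1}$ --- is the clean way to finish and is exactly what the paper is implicitly doing by citing the theorem wholesale.

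In short: the paper treats this as a known external result; you reprove the easy half elementarily and sketch the hard half, with one acknowledged and repairable gap about uniformity of $\delta$.
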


Note that if $A$ is the adjacency matrix of a finite graph then $A$ is self-adjoint and the largest eigenvalue (a.k.a.\ the Perron--Frobenius eigenvalue) of $A$ coincides with the $L^2$ operator norm of $A$. This norm satisfies $\|A\| \geq \langle A \mathbbm{1},\mathbbm{1}\rangle / \langle \mathbbm{1},\mathbbm{1}\rangle = 2 |E|/|V|$ where $\mathbbm{1}$ is the constant-one function, hence 
 if $(G_n)\seq$ is dense then the sequence of largest eigenvalues $\lambda_n$ satisfies \[\liminf |V(G_n)|^{-1}\lambda_n >0.\] \medskip

Suppose that $\mathcal H$ is dense and admits linear $\theta$-separators for some $\theta\in (0,1/2]$, so that there exists a constant $C_1$ such that for each $G \in \mathcal H$ there exists a set $A(G) \subseteq V(G)$ with $\theta |V(G)|\leq|A(G)|\leq (1-\theta)|V(G)|$ and $|\partial_E A(G)|\leq C_1|V(G)|$. For each $G \in \mathcal H$ let $H(G)$ and $H(G)^c$ denote the subgraphs of $G$ induced by $A(G)$ and $A(G)^c$ respectively. Since every vertex of $G$ has degree $2|E(G)|/|V(G)|$ we have that
\[
\min\{|E(H(G))|,|E(H(G)^c)|\} \geq \theta|E(G)|-|\partial_E A(G)|.
\]
For large $n$ we have that $\theta|E(G)|\gg |\partial_E A(G)|$ and hence that $(H(G))_{G \in \mathcal H}$ and $(H(G)^c)_{G \in \mathcal H}$ are both dense. Thus, it follows from \cref{lem:dense-thresh} that there exists a constant $C_2$ such that if we set $p(G)=C_2/|V(G)|$ for each $G \in \mathcal H$ then both $H(G)$ and $H(G)^c$ contain a giant component with high probability under percolation with parameter $p(G)$. The same also holds at $q(G)=2C_2/|V(G)|$, which is $\eps$-supercritical for $\mathcal H$ for an appropriate choice of $\eps>0$. But at this same parameter $q(G)$ the expected number of edges connecting $A(G)$ and $A(G)^c$ is bounded by $2C_1C_2$, so that by Poisson approximation the probability that there are no such edges is bounded away from zero uniformly over $G \in \mathcal H$. Thus the probability that $q(G)$-percolation on $G$ contains at least two giant components is bounded away from zero uniformly over $G \in \mathcal H$, and hence $\mathcal H$ does not have the supercritical uniqueness property. It follows that the supercritical uniqueness property fails for any set $\mathcal H'$ with $\mathcal H \subseteq \mathcal H' \subseteq \mathcal F$, by extending $q$ from $\mathcal H$ to $\mathcal H'$ by setting $q(G) := 1$ for all $ G \in \mathcal H' \backslash \mathcal H$.
\end{proof}

\begin{remark}
\label{rem:dense}
The proof of (iv) $\Rightarrow$ (i) above shows more generally that any sequence of finite, simple graphs with linear minimal degree and linear $\theta$-separators for some $\theta \in (0,1/2]$ fails to have the supercritical uniqueness property. On the other hand, the results of \cite{dense} imply that dense graphs without \emph{subquadratic} separators have the supercritical uniqueness property. It is natural to wonder in light of \cref{thm:main} whether the failure of supercritical uniqueness in graphs of linear minimal degree is always characterized by the existence of linear separators, without the assumption of vertex-transitivity.

\begin{figure}
\centering
\includegraphics[width=\textwidth]{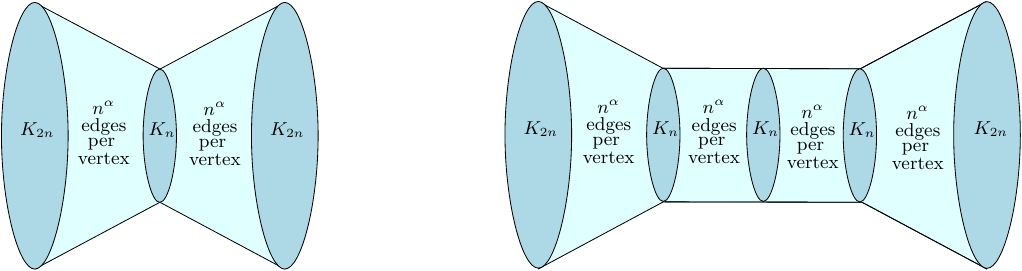}
\caption{Schematic illustration of the graphs discussed in \cref{rem:dense}}
\label{fig:dense_counterexample}
\end{figure}

This is not the case. Indeed, let $0<\alpha<1/2$ and suppose that we take two copies of $K_{2n}$ and one copy of $K_n$ arranged in a line with the two copies of $K_{2n}$ at the end and the copy of $K_n$ in the middle. We may glue these copies together in such a way that each vertex is connected to each of the complete graphs adjacent to its own complete graph by between $n^\alpha$ and $3 n^\alpha$ edges. It is easily verified that the smallest separators in the resulting graph sequence are of order $n^{1+\alpha}$ and that there will exist two distinct giant clusters with high probability when $p=3/4n$. We focus on the second claim, which is more involved. For such $p$ the two copies of $K_{2n}$ are supercritical and each contains a giant cluster, while the copy of $K_{n}$ is subcritical and has largest cluster of order $\log n$ with high probability. Thus, when we add in the edges between the various complete graphs, the probability that there exists a cluster in the copy of $K_n$ that has an edge connecting it to both of its neighbouring copies of $K_{2n}$ is small: a $K_n$-cluster of size $m =O(\log n)$ has both such edges adjacent to it with probability of order $m^2 n^{2\alpha-2}$, and since there are at most $2n$ such clusters the total conditional probability is $O((\log n)^2 n^{2\alpha-1})=o(1)$ with high probability, yielding the claim. This gives an example of a linear minimal-degree graph sequence that does not have linear separators but does not have the supercritical uniqueness property either.  By considering longer chains of copies of $K_n$ connecting the two copies of $K_{2n}$ as in \cref{fig:dense_counterexample}, one can obtain similar examples where the minimal size of a separator scales like an arbitrary power of $n$ between $n$ and~$n^2$.
\end{remark}

\section{Closing remarks} \label{section:things_to_find_a_home_for}

\label{subsec:counterexamples}

\noindent \textbf{Counterexamples.}
We now discuss examples demonstrating that \cref{thm:main} does not extend to arbitrary insertion-tolerant percolation models on the torus or to critical percolation.

\begin{example}[Multiple giants at criticality]
The cycle $\Z/n\Z$ (with its standard generating set) has multiple giant components with good probability when $p=1-\lambda/n$, and the set of closed edges converges to a Poisson process on the circle as $n\to \infty$. As observed by Alon, Benjamini, and Stacey \cite{percolation-expanders}, by considering the highly asymmetric torus $T_n := (\Z/2^n \Z) \times (\Z/n\Z)$ (with its standard generating set) one can obtain similar behaviour at values of $p$ that are bounded away from $1$.

Since these authors did not include a proof, let us now very briefly indicate how the analysis of this example works. We assume for notational simplicity that $n$ is a power of $2$ and hence is a factor of $2^n$. Let $X$ be the set of cylinders in $(\Z/2^n \Z) \times (\Z/n\Z)$ of the form $[kn,(k+1)n]\times (\Z/n\Z)$ whose vertices are incident to some simple cycle of dual edges that wrap around the torus. When $p>1/2$, it follows by sharpness of the phase transition on $\Z^2$ that there exists a constant $c_p>0$ with $c_p \to \infty$ as $p \to 1$ such that each particular cylinder has probability at most $e^{-c_pn}$ to belong to $X$, this probability being bounded by the probability that a box of size $n$ in $\Z^2$ intersects a dual cluster of diameter at least $n$. On the other hand, since the correlation length on $\mathbb Z^2$ diverges as $p \uparrow 1/2$, there exists a fixed $p > 1/2$ such that $\e_{p} \abs{X} \to \infty$ as $n \to \infty$. Using this one can prove that if we define $p_n$ to be the unique value such that $\e_{p_n} |X|=1$ then $\liminf p_n > 1/2$ and $\limsup p_n < 1$. It is fairly straightforward to prove that there are multiple giant clusters with positive probability at $p_n$. Indeed, using the same exponential decay estimates one can prove that non-neighbouring cylinders are highly de-correlated, so that $\e_{p_n}|X|^2=O(1)$ and there is a good probability for $X$ to contain at least two elements that are well-spaced around the torus. On this event there must be at least two giant components with high probability: since $p_n$ is bounded away from $p_c(\Z^2)=1/2$,  each vertex of the torus has good probability to belong to a cluster that wraps around the torus, and any two such vertices can be disconnected only if there are two closed dual cut-cycles separating them. The events that two distant vertices belong to such wrapping clusters are highly de-correlated, so that the number of vertices belonging to wrapping clusters is linear with high probability and the claim follows.
\end{example}

\begin{example}[Insertion-tolerance on the torus is not enough]
Consider the symmetric torus $(\Z/10n\Z)^2$ with its standard generating set. Consider the model defined as follows: First,  select a pair of vertical strips of the form $[k,k+n]\times (\Z/10n\Z)$ and $[k+5n,k+6n]\times (\Z/10n\Z)$ uniformly from among the $n$ available possibilities. Declare each edge belonging to one of these strips open with probability $1/4$ and each edge not belonging to one of these strips open with probability $3/4$. Using standard properties of subcritical and supercritical percolation on $\Z^2$, one easily obtains that this model contains exactly two giant clusters with high probability: the two large high-density strips will each contain a giant cluster with high probability, while there will be no clusters crossing either of the two thin low-density strips with high probability. By also applying a random rotation in $\{0,\pi/2,\pi,3\pi/4\}$ one obtains an automorphism invariant, uniformly insertion-tolerant, percolation-in-random-environment model on  $(\Z/10n\Z)^2$ with the same properties. As such, one should not expect any uniqueness-of-the-giant-component results to hold on finite graphs at anywhere near the same generality as found in the Burton--Keane theorem \cite{burton-keane}, even when restricting to symmetric tori of fixed dimension. By taking the relative width of the low-density strips to go to zero in a well-chosen manner as $n\to\infty$, one can construct a similar example in which the number of giant clusters is either one or two each with good probability and any two vertices are connected with good probability.
\end{example}

\label{subsec:existence}

\noindent \textbf{The supercritical existence property}. 
\cref{thm:main} can be though of a geometric characterisation of the infinite sets $\mathcal H \subseteq \mathcal F$ for which supercritical percolation has \emph{at most} one giant cluster with high probability. 
We now briefly address the complementary problem of whether there is \emph{at least} one giant cluster with high probability in supercritical percolation, noting that the definitions only ensure that such a cluster exists with \emph{good} probability (i.e., with probability bounded away from zero).

Let $\cH \subseteq \mathcal F$ be an infinite set. We say that $\cH$ has the \emph{supercritical existence property} if for every supercritical assignment $p : \cH \to [0,1]$ there exists a constant $\alpha > 0$ such that
\[
	\liminf_{G \in \cH} \p_{p(G)}^{G} \bra{ \text{the largest cluster contains at least } \alpha \abs{V(G)} \text{ vertices} } = 1.
\]
Notice that the sharp density property immediately implies the supercritical existence property. However the converse is false because (as we will show below) molecular graphs also have the supercritical existence property. This might lead one to suspect that the supercritical existence property always holds. In fact, this is not the case, and the counterexamples can once again be exactly characterised in terms of molecular graphs.
Let us note that the weaker supercritical existence property 
\[
	\lim_{\alpha \downarrow 0} \liminf_{ G \in \cH } \p_{p(G)}^{G} \bra{ \text{the largest cluster contains at least } \alpha \abs{V(G)} \text{ vertices} } = 1
\]
always holds (even without transitivity) as an immediate consequence of the universal tightness theorem \cite{uni-tightness}.
The following is a fairly straightforward consequence of our results and those of \cite{dense}.

\begin{cor} \label{cor:sep}
An infinite set $\cH \subseteq \cF$ has the supercritical existence property if and only if it is not the case that there exist arbitrarily large integers $m$ for which $\cH$ contains an $m$-molecular subset.
\end{cor}

\begin{proof}[Sketch of proof]
Applying the results of \cite{dense} as in the proof of \cref{thm:main} easily yields that if $\cH$ is $m$-molecular then 
\[
	\liminf_{G \in \cH} \p_{ (1-\eps)^{-1}  p_c^{G}( \alpha, \eps ) } ^{G} \bra{ \den{K_1} \leq 1/m } >0 
\]
for every $0<\alpha,\eps <1$, yielding the forward implication of the claim.
We now suppose $\mathcal H$ does not have the supercritical existence property and argue that it contains $m$-molecular subsequences for arbitrarily large values of $m$. 
It is an immediate consequence of \cref{prop:molecules:main} that $\mathcal H$ has at least one molecular subsequence. Suppose for contradiction that the supremal value of $m$ such that $\mathcal H$ has an $m$-molecular subsequence is finite, and denote this supremum by $M$. Since $\mathcal H$ does not have the supercritical existence property, there exists an infinite subset $\mathcal S \subseteq \mathcal H$ such that
\[
	\limsup_{G \in \mathcal S} \p_{ (1-\eps)^{-1}  p_c^{G}( \eps, \eps ) } ^{G} \bra{ \den{K_1} \geq \frac{\eps^2}{4M} } < 1,
\]
and applying \cref{prop:molecules:main} as before we may assume that this subset is $m$-molecular for some $2\leq m \leq M$. By taking a further infinite subset and changing $m$ if necessary, we may assume that this subset has density $\deg (G)/|V(G)|$ converging to some constant $c>0$ and 
does not have a further subset that is $k$-molecular for any $k>m$.

By definition there exists a constant $C$ such that for each $G \in \mathcal S$ there exists an automorphism-invariant set $F_G \subseteq E(G)$ with 
$|F_G|\leq C |V_G|$
 such that $G \setminus F_G$ has $m$ connected components. For each $G \in \mathcal S$, let $H_{G}$ be a graph isomorphic to each of the $m$ connected components of 
$G \backslash F_{G}$. Since $\mathcal S$ does not admit a subset that is $k$-molecular for any $k>m$,
$\mathcal A := \{ H_G : G \in \mathcal S \}$
 cannot itself contain a molecular subset. Thus, it follows from \cref{prop:molecules:main,thm:main} that $\mathcal A$ has the sharp density property and the supercritical uniqueness property. On the other hand, the vertex degrees of $H_G$ with $G \in \mathcal S$ are asymptotically equal to those of $G$ in the sense that the ratio tends to $1$, and \cref{lem:dense-thresh} allows us to compute the location of the percolation thresholds for these transitive dense graph sequences in terms of their vertex degrees. (Recall that the largest eigenvalue for the adjacency matrix of a regular graph is equal to its vertex degree.) So any assignment $\tilde p : \mathcal A \to [0,1]$ built from the assignment $p: \mathcal S \to [0,1]$ with $p(G) := (1-\eps)^{-1/2} \cdot p_c^G(\eps,\eps)$ by arbitrarily picking $\tilde p(H) \in \{ p(G) : H_G = H \}$ for each $H \in \mathcal A$ is itself supercritical for $\mathcal A$. We deduce from \cref{thm:main} that
\[
	\lim_{G \in \mathcal S} \e_{p(G)} ^{H_G} \den{K_2} =0.
\]
 In order for a vertex to belong to the largest cluster in some $G \in \mathcal S$, either it must belong to the largest cluster in its copy of $H_G$, or this is not the case and there is an open edge of $F_G$ incident to its cluster. By vertex transitivity every vertex of $G$ is incident to at most $C$ edges of $F_G$. It follows that, writing $K_i$ for the $i^{\text{th}}$ largest cluster,
\[\begin{split}
\E_{p(G)}^{G}\|K_1\| &\leq  \E_{p(G)}^{H_G}\left[\|K_1\| + \sum_{i\geq 2} Cp(G) \|K_i\| \cdot |K_i| \right] \\&\leq  \E_{p(G)}^{H_G}\|K_1\| + C  p(G) |V(G)| \e_{p(G)}^{H(G)}\|K_2\|.
\end{split}\]
Since $p(G)$ is of order $|V(G)|^{-1}$ the second term tends to zero as $G \to \infty$ with $G \in \mathcal S$, and we deduce that $\e_{p(G)}^{H_G}\|K_1\| \geq \frac{1}{2} \E_{p(G)}^{G}\|K_1\| \geq \frac{\eps^2}{2}$ for all but finitely many $G \in \mathcal S$. Define $p' : \mathcal S \to [0,1]$ by \[p'(G) := (1-\eps)^{-1/2} \cdot p(G) = (1-\eps)^{-1} \cdot p_c^G(\eps,\eps).\] Since $\mathcal A$ does not have any molecular subsets, it follows by Markov's inequality and \cref{prop:molecules:main} that 
\[
	\liminf_{G \in \mathcal S} \p_{  p'(G)}^{G} \bra{ \den{K_1} \geq \frac{\eps^2}{4M} } \geq 
	\liminf_{G \in \mathcal S } \p_{ p'(G)}^{H_G} \bra{ \den{K_1} \geq \frac{\eps^2}{4} }  = 1,
\]
a contradiction. \qedhere
\end{proof}

\bibliographystyle{abbrv}
\bibliography{references}
\end{document}